\newtheorem{theorem}{Theorem}[section]
\newtheorem{lemma}[theorem]{Lemma}
\newtheorem{prop}[theorem]{Proposition}
\newtheorem{cor}[theorem]{Corollary}
\newtheorem*{theoremA}{Theorem A}
\newtheorem*{corB1}{Corollary B1}
\newtheorem*{corB2}{Corollary B2}
\newtheorem*{corC}{Corollary C}
\newtheorem*{ConI}{Conjecture I}
\newtheorem*{propD}{Proposition D}
\newtheorem*{propJ}{Proposition J}
\newtheorem*{theoremE1}{Theorem E1}
\newtheorem*{theoremE2}{Theorem E2}
\newtheorem*{theoremF1}{Theorem F1}
\newtheorem*{theoremF2}{Theorem F2}
\newtheorem*{corH}{Corollary H}
\newtheorem*{corG}{Corollary G}
\def\mapnew#1{\smash{\mathop{\longrightarrow}\limits^{#1}}}
\numberwithin{equation}{section}
\def\-{\overline}
\newcommand{\R}{\mathbb{R}}
\newcommand{\Z}{\mathbb{Z}}
\def\X{\mathfrak{X}}
\theoremstyle{definition}
\begin{document}
\title[$\Sigma$-invariants of $\X(G)$]{
On the Bieri-Neumann-Strebel-Renz invariants of the weak commutativity construction $\X(G)$} 
\author{Dessislava H. Kochloukova}
\address{State University of Campinas (UNICAMP), SP, Brazil \\
\newline
email : desi@unicamp.br
\\} 
\email{}
\date{}
\keywords{homological and homotopical finiteness properties,$\Sigma$-invariants, weak commutativity}

\begin{abstract} For a finitely generated group $G$ we calculate the Bieri-Neumann-Strebel-Renz invariant $\Sigma^1(\X(G))$ for the weak commutativity construction $\X(G)$. Identifying $S(\X(G))$ with  $S(\X(G) / W(G))$ we show $\Sigma^2(\X(G),\Z) \subseteq   \Sigma^2(\X(G)/ W(G),\Z)$  and $\Sigma^2(\X(G)) \subseteq $ $  \Sigma^2(\X(G)/ W(G))$ that are equalities when $W(G)$ is finitely generated and we explicitly calculate  $\Sigma^2(\X(G)/ W(G),\Z)$ and $  \Sigma^2(\X(G)/ W(G))$ in terms of the $\Sigma$-invariants of $G$. We calculate completely the $\Sigma$-invariants in dimensions 1 and 2 of the group $\nu(G)$ and show that if $G$ is finitely generated group with finitely presented commutator subgroup then the non-abelian tensor square $G \otimes G$ is finitely presented.
\end{abstract}

\thanks{The author was   partially supported by CNPq grant 301779/2017-1  and by FAPESP grant 2018/23690-6.}

\maketitle

\section{Introduction}

In this paper we consider the Bieri-Neumann-Strebel-Renz $\Sigma$-invariants of the weak commutativity construction $\X(G)$.  By definition $\Sigma^m(G, \mathbb{Z})$ and $\Sigma^m(G)$ are subsets of the character sphere $S(G) = Hom(G, \mathbb{R})/ \sim$ where $\chi_1 \sim \chi_2$ if $\chi_1 \in \mathbb{R}_{>0} \chi_2$. The importance of the $\Sigma$-invariants is that they control which subgroups of $G$ that contain the commutator have homological type $FP_m$ or the homotopical type $F_m$.
  The first $\Sigma$-invariant was defined by Bieri and Strebel in \cite{B-S}, where it was used to classify all finitely presented metabelain groups. In the case of metabelian groups $\Sigma^1(G)$ has strong connection with valuation theory from commutative algebra that was used by Bieri and Groves to prove that the complement of $\Sigma^1(G)$ in the character sphere $S(G)$ is a spherical rational  polyhedron \cite{B-Gr}. 
  In \cite{B-N-S} Bieri, Neumann and Strebel defined the invariant  $\Sigma^1(G)$ for any finitely generated group $G$  and for 3-manifold groups they linked $\Sigma^1(G)$ with the Thurston norm \cite{T}. 
  
  Though in general it is difficult to calculate the $\Sigma$-invariants there are known for several classes of groups though sometime in low dimensions or in specific cases.
  The case of the Thompson group $F$ was considered by Bieri, Geoghegan and Kochloukova \cite{B-G-K} with a geometric proof given in \cite{W-Z} and of generalized Thompson groups $F_{n, \infty}$  by Kochloukova \cite{Ko} and by  Zaremsky  \cite{Z}. The case of free-by-cyclic group was studied by Funke and Kielak  \cite{B-F}, \cite{DK}, Cashen and Levitt \cite{C-L} and of Poincare duality group of dimension 3 by Kielak \cite{DK}. In \cite{D-K-L} Dowdall, Kapovich and Leininger consider links between dynamical properties of the expanding action of $\mathbb{Z}$ on a free finite rank free group and $\Sigma^1$. Though the case of right angled group was completely resolved by Meier, Meinert and van Wyk  \cite{Meinert-VanWyk} the case of general Artin groups is widely open. Still there are some results on $\Sigma^1(G)$ for specific Artin groups
  by Almeida \cite{Kisnney}, Almeida and Kochloukova \cite{Kisnney-K}. In  \cite{A-L} Almeida and Lima calculated $\Sigma^1(G)$ for Artin group of finite type ( i.e spherical type). The case of combinatorial wreath product was considered by Mendon\c{c}a \cite{Luis}.
  
  Let $G$ be a group
and $\-G$ an isomorphic copy of the group $G$.
 The group $\X(G)$ was defined by Sidki in \cite{Said} by the presentation 
  $${\X}(G) = \langle G,  \overline{G} \mid [g, \-g] = 1  \hbox{ for } g \in G \rangle,$$
  where $\-g$ is the image of $g$ in $\-G$.  In \cite[Thm.~C]{Said} Sidki proved  that if $G \in {\mathcal P}$  then ${\X}(G) \in {\mathcal P}$ when ${\mathcal P}$  is one of the following classes of groups : finite $\pi$-groups, where $\pi$ is a set of primes; finite nilpotent groups; solvable groups and perfect groups.
  Later 
  the following classes of groups ${\mathcal P}$ were added to the above list :
  finitely generated nilpotent groups by 
  Gupta, Rocco and Sidki  \cite{GRS}, virtually polycyclic groups by Lima and Oliveira \cite{LimaOliveira}, soluble groups of homological type $FP_{\infty}$ by Kochloukova and Sidki \cite{KochSidki}, finitely resented groups by Kochloukova and Bridson \cite{BK},  finitely generated  virtually nilpotent groups \cite{BK2}, finitely generated Engel groups \cite{BK2}. In this paper we add to the above list the class of finitely generated groups for which $\Sigma^1(G) ^c = S(G) \setminus \Sigma^1(G)$ is a  rationally defined spherical polyhedron. By definition a subset of $S(G)$ is a  rationally defined spherical  polyhedron if it is a finite union of finite intersection of closed rationally defined semispheres in $S(G)$, where rationality means that the semishere is defined by a rational vector. 
  
  The group ${\X}(G)$ has a special normal abelian subgroup $W(G)$ such that $\X(G) / W(G)$ is a subdirect product of $G \times G \times G$ that  maps surjectively on pairs. 
  Our first result calculates $\Sigma^1(\X(G))$ for any finitely generated group $G$.
 \begin{theoremA}
 	Let $G$ be a finitely generated group and $\chi : \X(G) \to \mathbb{R}$ be a character. Consider the homomorphisms $\chi_1, \chi_2 : G \to \R$, where $\chi_1(g) = \chi(g)$ and $\chi_2(g) = \chi(\overline{g})$.
 	 Then
 	$[\chi] \in \Sigma^1(\X(G))$ if and only if one of the following holds :
 	
 	1. $\chi_1 \not= 0$ , $ \chi_2 \not= 0$ and $ \chi_1 \not= \chi_2$;
 	
 	2. $\chi_1 = 0 $, $ [\chi_2] \in \Sigma^1(G)$;
 	
 	3. $\chi_2 = 0 $, $ [\chi_1] \in \Sigma^1(G)$;
 	
 	4. $\chi_1 = \chi_2 \not= 0$ and $[\chi_1] \in \Sigma^1(G)$.
 	
 	In particular identifying $S(\X(G))$ with $S(\X(G)/ W(G))$ via the epimorphism $\X(G) \to \X(G)/ W(G)$ we have the equality  $\Sigma^1(\X(G)) = \Sigma^1(\X(G)/ W(G))$.
 \end{theoremA}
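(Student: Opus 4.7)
The plan is to analyse $\Sigma^1(\X(G))$ in four disjoint cases determined by the behaviour of $\chi_1:=\chi|_G$ and $\chi_2:=\chi|_{\overline{G}}$, using three natural split retractions $\pi_i:\X(G)\to G$ together with the Bieri--Neumann--Strebel characterisation of $\Sigma^1$ by connectivity of the $\chi$-non-negative part of the Cayley graph. First I would record that $W(G)\subseteq[\X(G),\X(G)]$, so any character descends to $\X(G)/W(G)$ and the identification $S(\X(G))\cong S(\X(G)/W(G))$ is automatic. The three retractions are $\pi_1:g\mapsto g,\ \overline{g}\mapsto 1$; $\pi_2:g\mapsto 1,\ \overline{g}\mapsto g$; and $\pi_3:g,\overline{g}\mapsto g$, each split by an obvious inclusion, and $\chi$ factors through $\pi_1,\pi_2,\pi_3$ precisely when $\chi_2=0$, $\chi_1=0$, or $\chi_1=\chi_2$ respectively, so the four alternatives exhaust $S(\X(G))$.

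For necessity in cases 2, 3, 4, I would apply the split-retraction argument: if $\chi=\chi_0\circ\pi_i$ and $[\chi]\in\Sigma^1(\X(G))$, then projecting a connecting Cayley path in $\{\chi\ge 0\}$ of $\X(G)$ through $\pi_i$ yields a connecting path in the Cayley graph of $G$ on $\{\chi_0\ge 0\}$, forcing $[\chi_0]\in\Sigma^1(G)$; when none of the three degeneracies holds we fall into case 1 automatically. For sufficiency in cases 2, 3, 4, I would lift a finite family of $\chi_0$-non-negative connecting words in $G$ through the section, and handle the transverse generators of $\X(G)$ (those in $\ker\pi_i$) by exploiting the defining relation $[g,\overline{g}]=1$, which permits rearrangement at $\chi$-level zero to bring elements into reach of the lifted paths.

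The genuinely hard case is \emph{case 1}. Here I would choose $a,b,c\in G$ with $\chi_1(a)>0$, $\chi_2(b)>0$ and $(\chi_1-\chi_2)(c)>0$, giving three linearly independent positive directions $a,\overline{b},c\overline{c}^{-1}$ in the abelianisation of $\X(G)$. Connectivity of the Cayley graph on $\{\chi\ge 0\}$ should then follow by showing that any generator on which $\chi$ is negative can be conjugated by a suitable word in $a,\overline{b},c\overline{c}^{-1}$ into the non-negative region, with the commutation relation again providing the glue for interleaving $G$- and $\overline{G}$-moves. This is the main obstacle of the proof, since no reduction to $\Sigma^1(G)$ is available and everything must be carried out by direct Cayley-graph bookkeeping inside $\X(G)$. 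The final equality $\Sigma^1(\X(G))=\Sigma^1(\X(G)/W(G))$ is then immediate from the explicit four-case criterion, which depends only on $\chi_1,\chi_2$ and $\Sigma^1(G)$ and is therefore unchanged by the quotient; an analogous case analysis for the subdirect product $\X(G)/W(G)\le G\times G\times G$ produces the same set.
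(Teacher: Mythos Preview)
Your necessity argument is correct and matches the paper's: pushing a character forward through an epimorphism preserves membership in $\Sigma^1$, so in each of the degenerate cases $\chi_2=0$, $\chi_1=0$, $\chi_1=\chi_2$ one is forced into $\Sigma^1(G)$ via $\pi_1,\pi_2,\pi_3$ respectively.

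Your sufficiency strategy, however, has a genuine gap, and the paper organises the backward direction quite differently. You propose to treat cases 2, 3, 4 uniformly via the split retractions $\pi_i$: lift connecting words through the section and ``handle the transverse generators (those in $\ker\pi_i$)'' using $[g,\overline g]=1$. This does not work as stated. In cases 2 and 3 the kernels $\ker\pi_2$ and $\ker\pi_1$ are the normal closures of $G$ and of $\overline G$ in $\X(G)$, which are not finitely generated in general, and the relation $[g,\overline g]=1$ only commutes $g$ with its \emph{own} copy $\overline g$, so it does not allow you to separate the $G$- and $\overline G$-parts of an arbitrary word. The retraction scheme already fails in the model situation $F_2=\langle a,b\rangle\twoheadrightarrow\langle a\rangle$ with $\chi(a)=1$, $\chi(b)=0$: the induced character lies in $\Sigma^1(\mathbb Z)$ yet $\Sigma^1(F_2)=\emptyset$. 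In case 4 the kernel is $L=\ker\pi_3$, and here the lifting argument \emph{does} go through --- but only because $L$ is finitely generated, a nontrivial theorem of Bridson--Kochloukova which you do not invoke.

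The paper instead groups cases 1, 2, 3 together (all satisfy $\chi_1\ne\chi_2$) and dispatches them in one stroke: project to $\X(G)/D\simeq G\times G$, where $[(\chi_1,\chi_2)]\in\Sigma^1(G\times G)$ by the direct-product formula, lift a connecting path back to $\X(G)$, and correct the resulting $D$-error using an element $a\in L$ with $\chi(a)>0$ together with the key structural identity $[L,D]=1$. Your element $c\,\overline c^{\,-1}$ in case 1 is exactly this $a$, and your ``commutation glue'' is really $[L,D]=1$; what is missing from your plan is the reduction to $G\times G$ via the quotient by $D$, which is what makes the Cayley-graph bookkeeping tractable and simultaneously covers cases 2 and 3. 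Case 4 is then handled separately via the finite generation of $L$. The final identification $\Sigma^1(\X(G))=\Sigma^1(\X(G)/W(G))$ proceeds as you indicate.
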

Theorem A implies immediately the following corollary.
\begin{corB1} Let $G$ be a finitely generated group.
Then $\Sigma^1(G) ^c$ is a  rationally defined spherical  polyhedron if and only if  $\Sigma^1(\X(G))^c$ is a  rationally defined spherical  polyhedron.
\end{corB1} 

Let $\pi_1 : \X(G) \to G$ be the epimorphism that sends $\overline{G} $ to 1 and is identity on $G$. Let $\pi_2 : \X(G) \to {G}$ be the epimorphism that sends ${G} $ to 1 and sends $\overline{g}$ to $g$ for every $g \in G$. Finally set the  epimorhism
$
\pi_3 : \X(G) \to  G
$
 that sends both $g$ and $\overline{g}$ to $g$ for every $g \in G$.

\begin{corB2}   Let $G$ be a finitely generated group and
$N$ be a subgroup of $\X(G)$ that contains the commutator subgroup $\X(G)'$. Then $N$ is finitely generated if and only if $\pi_1(N)$, $\pi_2(N)$ and $\pi_3(N)$ are all finitely generated. 
In particular $\X(G)'$ is finitely generated if and only if $G'$ is finitely generated. 
\end{corB2}

Limit groups were defined by Sela and independently were studied by Kharlampovich and Myasnikov that refered to them as fully residually free groups.  Limit groups were discovered  during the development of the theory that lead to the solution of the Tarski problem on the elementary theory of non-abelian free groups of finite rank in \cite{K-M}  and \cite{Sela}. In \cite{Desi1} Kochloukova showed that for a non-abelian limit group $G$ we have $\Sigma^1(G) = \emptyset$.

\begin{corC} Let $G$ be a non-abelian limit group. Then
	$$\Sigma^1(\X(G)) = \{ [\chi] \in S(\X(G)) \mid \chi_1 \not= 0, \chi_2 \not= 0, \chi_1 \not= \chi_2 \}.$$
	\end{corC}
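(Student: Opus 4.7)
The plan is to deduce Corollary C directly from Theorem A, using the single additional input that $\Sigma^1(G) = \emptyset$ whenever $G$ is a non-abelian limit group; this is a theorem of Kochloukova \cite{Desi1} already quoted in the introduction.

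First I would check that the cases listed in Theorem A exhaust all classes $[\chi] \in S(\X(G))$. Since $\X(G)$ is generated as a group by $G \cup \overline{G}$, any character $\chi : \X(G) \to \R$ is determined by its restrictions to these two subsets, so $\chi$ is encoded by the pair $(\chi_1,\chi_2)$. In particular $\chi \neq 0$ forces $(\chi_1,\chi_2) \neq (0,0)$, and every class $[\chi] \in S(\X(G))$ falls into exactly one of the four mutually exclusive cases enumerated in Theorem A: (1) $\chi_1,\chi_2$ both nonzero and distinct; (2) $\chi_1 = 0$, $\chi_2 \neq 0$; (3) $\chi_2 = 0$, $\chi_1 \neq 0$; or (4) $\chi_1 = \chi_2 \neq 0$.

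Next I would apply Theorem A to each case. In case (1), Theorem A unconditionally places $[\chi]$ in $\Sigma^1(\X(G))$, which is exactly the set appearing on the right-hand side of the claimed identity. In each of cases (2), (3), (4), Theorem A reduces membership in $\Sigma^1(\X(G))$ to a statement of the form $[\chi_i] \in \Sigma^1(G)$ for some nonzero character $\chi_i : G \to \R$. Since $\Sigma^1(G) = \emptyset$ by \cite{Desi1}, no such class exists, so none of cases (2), (3), (4) contributes any element to $\Sigma^1(\X(G))$. Combining the two observations yields the stated formula.

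I do not anticipate a genuine obstacle. Once Theorem A is available, Corollary C is a routine bookkeeping exercise driven entirely by the vanishing of $\Sigma^1$ for non-abelian limit groups; the only mild point to confirm is that $\chi_1=\chi_2=0$ forces $\chi=0$, which is immediate from $\X(G) = \langle G \cup \overline{G}\rangle$.
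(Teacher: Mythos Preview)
Your proposal is correct and follows essentially the same approach as the paper's proof: invoke Kochloukova's result that $\Sigma^1(G)=\emptyset$ for a non-abelian limit group, and then read off the answer from Theorem~A. The paper's argument is just a terser version of what you wrote.
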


We continue with the study of the $\Sigma$-invariants focusing on $\Sigma^2(\X(G), \Z)$ and $\Sigma^2(\X(G))$. Our first result in this direction is for non-abelian limit groups $G$.

\begin{propD}
		Let $G$ be a finitely generated group.
		Suppose that $\Sigma^1(G) = \emptyset$. Then $\Sigma^2(\X(G), \mathbb{Z}) = \emptyset$. In particular this holds for non-abelian limit groups $G$.
	\end{propD}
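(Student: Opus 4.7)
The starting point is the general monotonicity $\Sigma^2(\X(G),\mathbb{Z})\subseteq \Sigma^1(\X(G))$, so it is enough to show that no $[\chi]\in\Sigma^1(\X(G))$ lies in $\Sigma^2(\X(G),\mathbb{Z})$. Under the hypothesis $\Sigma^1(G)=\emptyset$, Theorem~A rules out cases~2--4 entirely, so any $[\chi]\in\Sigma^1(\X(G))$ must be \emph{generic} in the sense that $\chi_1\ne 0$, $\chi_2\ne 0$ and $\chi_1\ne \chi_2$. Using the identification $S(\X(G))=S(\X(G)/W(G))$ from Theorem~A together with the inclusion $\Sigma^2(\X(G),\mathbb{Z})\subseteq \Sigma^2(\X(G)/W(G),\mathbb{Z})$ announced in the abstract, the problem reduces to showing that no generic character $\bar\chi$ lies in $\Sigma^2(P,\mathbb{Z})$, where $P:=\X(G)/W(G)$.

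I would then exploit the embedding $P\hookrightarrow G\times G\times G$ as a subdirect product mapping onto pairs. A generic character decomposes as $\bar\chi=\chi_1\circ\pi_1+\chi_2\circ\pi_2$, so it is the pullback along the epimorphism $(\pi_1,\pi_2):P\twoheadrightarrow G\times G$ of the character $(\chi_1,\chi_2)$ on $G\times G$. The idea is to transfer $\Sigma^2$ along $(\pi_1,\pi_2)$: if $[\bar\chi]\in\Sigma^2(P,\mathbb{Z})$, then $[(\chi_1,\chi_2)]\in\Sigma^2(G\times G,\mathbb{Z})$. The Bieri--Geoghegan product formula for $\Sigma^2$ of direct products then forces $[\chi_1]\in\Sigma^1(G)$ and $[\chi_2]\in\Sigma^1(G)$ (since neither component is zero), contradicting the hypothesis $\Sigma^1(G)=\emptyset$ and finishing the proof. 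The addendum for non-abelian limit groups is immediate from the vanishing $\Sigma^1(G)=\emptyset$ due to Kochloukova recalled in the introduction.

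The main obstacle is the inheritance step along $(\pi_1,\pi_2):P\to G\times G$. Its kernel is the third-coordinate slice of the subdirect product---in Sidki's language, essentially the image of Sidki's subgroup $D(G)$ in $P$---and it need not be $FP_1$ or even finitely generated as a normal subgroup of $P$ under the single hypothesis $\Sigma^1(G)=\emptyset$, so one cannot quote a clean off-the-shelf inheritance result. A robust substitute is to work directly with the three symmetric decompositions of $\bar\chi$: besides $\bar\chi=\chi_1\circ\pi_1+\chi_2\circ\pi_2$ one also has $\bar\chi=(\chi_2-\chi_1)\circ\pi_2+\chi_1\circ\pi_3$ and $\bar\chi=(\chi_1-\chi_2)\circ\pi_1+\chi_2\circ\pi_3$, each of which is nondegenerate by genericity, and to combine these with the Bieri--Renz homological criterion via a Mayer--Vietoris or spectral-sequence argument over the three coordinate projections. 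This mirrors (and presumably is a special case of) the explicit computation of $\Sigma^2(P,\mathbb{Z})$ in terms of the $\Sigma$-invariants of $G$ that the abstract announces for later in the paper.
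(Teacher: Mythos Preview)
Your outline can be made to work, but it is circuitous and leaves the decisive step unargued. You first pass to $P=\X(G)/W(G)$ via the inclusion $\Sigma^2(\X(G),\mathbb Z)\subseteq\Sigma^2(P,\mathbb Z)$ (Theorem~F1(a)) and then need to push $\Sigma^2$ further down from $P$ to $G\times G$; you correctly identify this as the hard point and in the end only gesture toward ``a Mayer--Vietoris or spectral-sequence argument'' that amounts to invoking the full computation of $\Sigma^2(P,\mathbb Z)$ (Theorem~E1), which in turn rests on the external Kochloukova--Mendon\c{c}a monoidal criterion (Theorem~\ref{thmI}). One minor slip: the product formula for $\Sigma^2(G\times G,\mathbb Z)$ with both $\chi_i\ne 0$ only forces \emph{one} of $[\chi_1],[\chi_2]$ into $\Sigma^1(G)$, not both; this is harmless here since $\Sigma^1(G)=\emptyset$.

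The paper's proof is much shorter and needs neither Theorem~F1 nor Theorem~E1. It goes directly from $\X(G)$ to $\X(G)/D\simeq G\times G$ via Proposition~\ref{Sigma2}: if $[\chi]\in\Sigma^2(\X(G),\mathbb Z)$ then either $\chi_1=\chi_2$ and $[\chi_1]\in\Sigma^2(G,\mathbb Z)\subseteq\Sigma^1(G)$, or $\chi_1\ne\chi_2$ and $[\chi_0]\in\Sigma^2(G\times G,\mathbb Z)$; the product formula (Theorem~\ref{teo-conjec.prod.dir.corpo2}) then kills both cases when $\Sigma^1(G)=\emptyset$. The inheritance step you flagged as the obstacle is handled by a trick you overlooked: although $D$ itself is not finitely generated, the relation $[L,D]=1$ makes the $\X(G)$-action on $D/[D,D]$ factor through $\X(G)/L$, and since $\chi_1\ne\chi_2$ means $\chi(L)\ne 0$ one has $\X(G)=\X(G)_\chi\,L$, so $D/[D,D]$ is already finitely generated as a $\mathbb Z(\X(G)/D)_{\chi_0}$-module. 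That is exactly the hypothesis of Lemma~\ref{abel123}, which delivers $[\chi_0]\in\Sigma^2(G\times G,\mathbb Z)$ directly. (The same trick would work for your map $P\to G\times G$, since $[L/W,D/W]=1$ in $P$; the detour through $P$ is simply unnecessary.)
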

Recall that the group ${\X}(G)$ has a special normal abelian subgroup $W(G)$ such that $\X(G) / W(G)$ is a subdirect product of $G \times G \times G$ that  maps surjectively on pairs. Thus together with the result of Bridson, Howie, Miller, Short \cite[Thm.~A]{BHMS} implies that  
$\X(G) / W(G)$ is finitely presented whenever $G$ is finitely presented.  A homological version of this result was proved in \cite[Thm.~D]{KochSidki} using $\Sigma$-theory  i.e.  $\X(G) / W(G)$ is $FP_2$ whenever $G$ is $FP_2$. 
Recently Bridson and Kochloukova generalised this by showing in \cite{BK} that $\X(G)$ is finitely presented (resp. $FP_2$) if and only if $G$ is finitely presented (resp. $FP_2$).  This does not generalise to homological type $FP_3$ since for finitely generated free non-cyclic group $G$ the group $\X(G)$ is not of type $FP_3$ \cite{BK}.

In a recent work Kochloukova and Lima \cite{KL2} studied the $\Sigma$-invarants of subdirect products of non-abelian limit groups, in particular this applies for $\X(G) / W(G)$ when $G$ is a non-abelian limit group. In the following theorem the groups are not presumed limit groups and different tecniques from \cite{KL2}  are applied.   
\begin{theoremE1}
Let $G$ be of type $FP_2$ and $\chi : K = \X(G) / W(G) \to \R$ be a character. Let $\chi_1, \chi_2 : G \to \R$  be characters defined by $\chi_1(g) =  \chi(g)$ and $\chi_2(g) = \chi(\overline{g})$. 
Then $[\chi] \in \Sigma^2(K, \Z)$  if and only if one of the following cases holds :

1. $\chi_1  = 0$, $[\chi_2] \in \Sigma^2(G, \Z)$;

2. $\chi_2 = 0$, $[\chi_1] \in \Sigma^2(G, \mathbb{Z})$;

3. $\chi_1 = \chi_2 \not= 0$ and $[\chi_1] \in \Sigma^2(G, \Z)$;

4. $\chi_1 \not= 0, \chi_2 \not= 0$, $\chi_1 \not= \chi_2$ and one of the following holds :

4.1. $\{ [\chi_1], [\chi_2] \} \subseteq \Sigma^1(G)$;

4.2. $\{ [\chi_1], [\chi_1 - \chi_2] \} \subseteq \Sigma^1(G)$;

4.3. $\{ [\chi_2], [\chi_2 - \chi_1] \} \subseteq \Sigma^1(G)$.

\end{theoremE1}
The proof of Theorem E1 uses substantially  Theorem \ref{thmI}.
Since Theorem \ref{thmI} has  homotopical version we have a homotopical version of Theorem E1.

\begin{theoremE2}
Let $G$ be finitey presented group and $\chi : K = \X(G) / W(G) \to \R$ be a character. Let $\chi_1, \chi_2 : G \to \R$  be characters defined by $\chi_1(g) =  \chi(g)$ and $\chi_2(g) = \chi(\overline{g})$. 
Then $[\chi] \in \Sigma^2(K)$  if and only if one of the following cases holds :

1. $\chi_1  = 0$, $[\chi_2] \in \Sigma^2(G)$;

2. $\chi_2 = 0$, $[\chi_1] \in \Sigma^2(G)$;

3. $\chi_1 = \chi_2 \not= 0$ and $[\chi_1] \in \Sigma^2(G)$;

4. $\chi_1 \not= 0, \chi_2 \not= 0$, $\chi_1 \not= \chi_2$ and one of the following holds :

4.1. $\{ [\chi_1], [\chi_2] \} \subseteq \Sigma^1(G)$;

4.2. $\{ [\chi_1], [\chi_1 - \chi_2] \} \subseteq \Sigma^1(G)$;

4.3. $\{ [\chi_2], [\chi_2 - \chi_1] \} \subseteq \Sigma^1(G)$.

\end{theoremE2}

In general little is known for $W(G)$. In \cite{KochSidki} Kochloukova and Sidki proved using homological methods that if $G$ is $FP_2$ and  $G'/ G''$ is finitely generated then $W(G)$ is finitely generated. In Theorem F1 we get partial information on $\Sigma^2(\X(G), \mathbb{Z})$.

\begin{theoremF1} Suppose  $G$ is a group of type $FP_2$. 	For a character $\chi : \X(G) \to \R$ define $\widehat{\chi}: \X(G) / W \to \R$ to be the  character induced by $\chi$, where   $W = W(G)$.
	
a) If 	
$ [\chi] \in \Sigma^2(\X(G), \Z)$ then  $[\widehat{\chi}] \in \Sigma^2(\X(G)/W, \Z)$.

b) Suppose $W$ is finitely generated and $[\widehat{\chi}] \in \Sigma^2(\X(G)/W, \Z)$. Then $ [\chi] \in \Sigma^2(\X(G), \Z)$.
In particular this holds when the abelianization of the commutator group $G'= [G,G]$ is finitely generated.
\end{theoremF1}
Theorem F1 has the following homotopical version.

\begin{theoremF2} Suppose  $G$ is a finitely presented group. 	For a character $\chi : \X(G) \to \R$ define $\widehat{\chi}: \X(G) / W \to \R$ to be the  character induced by $\chi$, where   $W = W(G)$.
	
a) If 	
$ [\chi] \in \Sigma^2(\X(G))$ then  $[\widehat{\chi}] \in \Sigma^2(\X(G)/W)$.

b) Suppose $W$ is finitely generated and $[\widehat{\chi}] \in \Sigma^2(\X(G)/W)$. Then $ [\chi] \in \Sigma^2(\X(G))$.
In particular this holds when the abelianization of the commutator group $G'= [G,G]$ is finitely generated.
\end{theoremF2}

The following result follows from Theorem A, Theorem E1, Theorem E2, Theorem F1 and Theorem F2. Though it is an easy corollary of those theorems  it looks more symmetric than the theorems it derives from. For a group $H$ and a subgroup $N$ of $H$ by definition $S(H,N) = \{ [\chi] \in S(H) ~| ~\chi(N) = 0 \}$.

\begin{corG}
a) Suppose that $G$ is a finitely generated group. Then we have the disjoint union $$\Sigma^1(\X(G))^c = V_1 \cup V_2 \cup V_3$$
where
$$
V_i = \Sigma^1(\X(G))^c \cap S(\X(G), Ker (\pi_i))
$$
and the epimorphisms $\pi_i : \X(G) \to G$ induces a bijection
$\pi_i^* :  \Sigma^1(G) ^c \to V_i$;

b) Suppose that $G$ is a finitely presented group. Then
$$
W_1 \cup W_2 \cup W_3 \cup ( V_1 + V_2) \cup (V_2 + V_3) \cup (V_1 + V_3) \subseteq \Sigma^2(\X(G))^c$$
where $$
W_i = \Sigma^2(\X(G))^c \cap S(\X(G), Ker (\pi_i))
$$
and the epimorphisms $\pi_i : \X(G) \to G$ induces a bijection
$\pi_i^* : \Sigma^2(G)^c \to W_i$;

c) Suppose that $G$ is of homological type $FP_2$. Then
$$
M_1 \cup M_2 \cup M_3 \cup ( V_1 + V_2) \cup (V_2 + V_3) \cup (V_1 + V_3) \subseteq \Sigma^2(\X(G), \mathbb{Z})^c$$
where $$
M_i = \Sigma^2(\X(G), \mathbb{Z})^c \cap S(\X(G), Ker (\pi_i))
$$
and the epimorphisms $\pi_i : \X(G) \to G$ induces a bijection
$\pi_i^* : \Sigma^2(G, \mathbb{Z})^c \to M_i$;
\end{corG}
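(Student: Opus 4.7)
The plan is to reduce Corollary G to a case analysis on a character $\chi : \X(G) \to \R$ via its pair $(\chi_1, \chi_2)$ of induced characters on $G$. The basic observation is that $S(\X(G), \ker \pi_i)$ consists exactly of the classes $[\chi]$ for which $\chi$ factors through $\pi_i$, i.e.\ $\chi_2 = 0$ for $i = 1$, $\chi_1 = 0$ for $i = 2$, and $\chi_1 = \chi_2$ for $i = 3$; so the pullback $\pi_i^*$ becomes the evident bijection $S(G) \to S(\X(G), \ker\pi_i)$.

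For part (a), Theorem A tells us that $[\chi] \notin \Sigma^1(\X(G))$ holds exactly when one of the three degenerate regimes above occurs together with the failure of the corresponding $\Sigma^1$-condition on the underlying character $\psi$ of $G$. Any two of the defining equations $\chi_1 = 0$, $\chi_2 = 0$, $\chi_1 = \chi_2$ force $\chi = 0$, so the three loci are pairwise disjoint and partition $\Sigma^1(\X(G))^c$ as $V_1 \sqcup V_2 \sqcup V_3$; in each degenerate regime the $\Sigma^1(\X(G))$-condition collapses to the $\Sigma^1(G)$-condition on $\psi$, giving the bijections $\pi_i^* : \Sigma^1(G)^c \to V_i$.

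For parts (b) and (c) (I treat (b); (c) is identical with E1, F1 in place of E2, F2), the inclusion $W_i \subseteq \Sigma^2(\X(G))^c$ is tautological. The bijection $\pi_i^* : \Sigma^2(G)^c \to W_i$ is obtained by combining the bijection $\pi_i^* : S(G) \to S(\X(G), \ker \pi_i)$ already identified in (a) with the equivalence $[\chi] \in \Sigma^2(\X(G)) \iff [\psi] \in \Sigma^2(G)$ for $\chi = \psi \circ \pi_i$, which comes from Theorem F2(a) together with case 1, 2, or 3 of Theorem E2. For the inclusion $(V_i + V_j) \subseteq \Sigma^2(\X(G))^c$ with $i \neq j$, I choose representatives $\chi, \chi'$ and compute the pair $((\chi + \chi')_1, (\chi + \chi')_2)$: in each of the three possible combinations at least one component lies in $\Sigma^1(G)^c$, which rules out cases 1, 2, 3 and every subcase 4.1--4.3 of Theorem E2 for $\widehat{\chi+\chi'}$ on $K = \X(G)/W$. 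Hence $[\widehat{\chi + \chi'}] \notin \Sigma^2(K)$, and Theorem F2(a) gives $[\chi + \chi'] \notin \Sigma^2(\X(G))$.

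The main obstacle is the reverse implication of the equivalence above, namely $[\psi] \in \Sigma^2(G) \Rightarrow [\psi \circ \pi_i] \in \Sigma^2(\X(G))$, because Theorem F2(b) is formally available only under the assumption that $W$ is finitely generated. To bypass that hypothesis I plan to exploit the splitting $s_i : G \hookrightarrow \X(G)$ of $\pi_i$: the retraction structure should allow one to transfer a finitely dominated classifying $2$-complex for $(G,\psi)$ directly into one for $(\X(G), \chi)$, avoiding the spectral-sequence step on which Theorem F2(b) rests.
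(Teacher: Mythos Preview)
Your treatment of part (a), of the inclusions $W_i\subseteq\Sigma^2(\X(G))^c$, and of the $(V_i+V_j)$-analysis in parts (b)--(c) is essentially the paper's own argument: reduce via Theorem~F2(a) (resp.\ F1(a)) to $\Sigma^2(\X(G)/W)^c$, then unpack Theorem~E2 (resp.\ E1) as a case split on $(\chi_1,\chi_2)$, and read off the three ``degenerate'' regimes and the three sum-decompositions in case~4. The paper records exactly the same decompositions $[(\chi_1,0)]+[(0,\chi_2)]$, $[(0,\chi_2-\chi_1)]+[(\chi_1,\chi_1)]$, $[(\chi_2,\chi_2)]+[(\chi_1-\chi_2,0)]$ that you describe.

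You are also right that the surjectivity of $\pi_i^* : \Sigma^2(G)^c \to W_i$ requires the implication $[\psi]\in\Sigma^2(G)\Rightarrow[\psi\circ\pi_i]\in\Sigma^2(\X(G))$, and that Theorems~E2 and F2(a) give only the \emph{other} direction. The paper's own proof simply asserts the biconditional (``By the definition of $W_i$ we have that $[\chi]\in W_i$ if and only if $\chi$ satisfies the $i$-th condition''), so your identification of this as the crux is accurate.

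However, your proposed repair is not viable. A retraction $\pi_i:\X(G)\to G$ with section $s_i$ lets you transport finiteness data \emph{from} $\X(G)$ \emph{to} $G$, not the reverse: a $K(G,1)$-complex with controlled $2$-skeleton tells you nothing about the $2$-type of the larger group $\X(G)$. The abstract principle you invoke is false already for the retraction $F_2\twoheadrightarrow\Z$ (section $1\mapsto x$): here $\Sigma^2(\Z)=S(\Z)$ while $\Sigma^2(F_2)=\Sigma^1(F_2)=\emptyset$, so no character of $\Z$ pulls back into $\Sigma^2(F_2)$. Any valid argument must use specific structural features of $\ker\pi_i$ in $\X(G)$ (for $i=3$ this is $L$, and Corollary~\ref{Lpres} would do it \emph{if} $L$ were known to be $FP_2$, which it is not in general). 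As it stands, neither your plan nor the paper's proof supplies this step, and indeed the surjectivity assertion is tantamount to the restriction of Conjecture~I to the subspheres $S(\X(G),\ker\pi_i)$.
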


As a corollary of Theorem F1 and Theorem F2 we obtain the following result.

\begin{corH} Let $G$ be a group of type $FP_2$. (resp finitely presented).	For a subgroup $N$ of $\X(G)$ that contains the commutator $\X(G)'$ and such that $N$ is of type $FP_2$ (resp. finitely presented) we have that $N/ W(G)$ is of type $FP_2$ too (resp. finitely presented).
	Furthermore for $m \geq 1$ the commutator $\X(G)'$ is $FP_m$ (resp. finitely presented) if and only if the commutator $G'$ is $FP_m$ (resp. finitely presented). When this happens $W(G)$ is finitely generated.
\end{corH}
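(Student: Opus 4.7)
The skeleton of the argument is the Bieri--Renz criterion: a subgroup $N$ of a group $H$ of the appropriate type that contains the commutator $H'$ is of type $FP_m$ (resp.\ $F_m$) if and only if $S(H,N) \subseteq \Sigma^m(H,\mathbb{Z})$ (resp.\ $\subseteq \Sigma^m(H)$). For the first assertion, apply this to $N \subseteq \X(G)$ being $FP_2$ (resp.\ finitely presented), obtaining $S(\X(G), N) \subseteq \Sigma^2(\X(G), \mathbb{Z})$ (resp.\ $\subseteq \Sigma^2(\X(G))$). Under the identification $S(\X(G)) = S(\X(G)/W)$ the set $S(\X(G), N)$ becomes $S(\X(G)/W, N/W)$, since $W \subseteq N$ forces every character vanishing on $N$ to vanish on $W$ automatically. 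Theorem F1(a) (resp.\ Theorem F2(a)) now transports the $\Sigma^2$-containment to $\X(G)/W$, and a second application of Bieri--Renz to the subgroup $N/W \supseteq (\X(G)/W)'$ of $\X(G)/W$ yields the claimed finiteness property of $N/W$.

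For the iff statement on commutators, Bieri--Renz translates it into the assertion that $\Sigma^m(\X(G), \mathbb{Z}) = S(\X(G))$ if and only if $\Sigma^m(G, \mathbb{Z}) = S(G)$ (with the analogous statement in the homotopical setting). The case $m = 1$ is precisely Corollary B2. For $m = 2$ and the finitely presented case, the forward direction uses Theorem F1(a)/F2(a) to pass down to $\Sigma^2(\X(G)/W, \mathbb{Z}) = S(\X(G)/W)$, after which Theorem E1/E2 applied to characters of the form $(\chi_1, 0)$ forces $\Sigma^2(G, \mathbb{Z}) = S(G)$, i.e.\ $G'$ is $FP_2$ (resp.\ finitely presented). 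For the backward direction, $G'$ being $FP_2$ makes $G'/G''$ finitely generated, so the earlier homological result of Kochloukova--Sidki \cite{KochSidki} recalled in the introduction gives that $W = W(G)$ is finitely generated. Starting from $\Sigma^2(G, \mathbb{Z}) = S(G)$ (which also forces $\Sigma^1(G) = S(G)$ by monotonicity of the $\Sigma$-invariants), Theorem E1/E2 verifies $\Sigma^2(\X(G)/W, \mathbb{Z}) = S(\X(G)/W)$ case by case, and Theorem F1(b)/F2(b) is now available, because $W$ is finitely generated, to lift the equality back to $\Sigma^2(\X(G), \mathbb{Z}) = S(\X(G))$. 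The final clause of the corollary, that $W$ is finitely generated whenever the iff holds, is built into this argument.

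The main obstacle is the backward direction: to appeal to Theorem F1(b)/F2(b) one needs $W$ finitely generated \emph{a priori}, and this is not an output of $\Sigma$-theory but of the genuinely homological input of \cite{KochSidki}. Without that input the chain of implications would stop at $\X(G)/W$ and refuse to climb back up to $\X(G)$.
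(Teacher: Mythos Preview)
Your argument for the first assertion (that $N/W(G)$ inherits $FP_2$, resp.\ finite presentability, from $N$) is exactly the paper's: Bieri--Renz, then Theorem F1(a)/F2(a), then Bieri--Renz again.

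For the iff on commutators, however, your $\Sigma$-theoretic route only covers $m\le 2$ and the finitely-presented case; you say nothing about $m\ge 3$. Worse, the very translation you propose, ``$\X(G)'$ is $FP_m$ $\Leftrightarrow$ $\Sigma^m(\X(G),\Z)=S(\X(G))$'', is not available for $m\ge 3$: Bieri--Renz needs the ambient group to be $FP_m$, and by Theorem~\ref{fp2} the group $\X(G)$ need not be $FP_3$ even when $G$ is (take $G$ free non-cyclic of finite rank). So the $\Sigma$-machinery simply cannot carry the argument beyond $m=2$.

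The paper avoids this by arguing structurally for all $m$ at once. Forward: from the splitting $\X(G)=L\rtimes G$ one gets $\X(G)'=\langle L',[L,G]\rangle\rtimes G'$, so $G'$ is a retract of $\X(G)'$ and inherits $FP_m$ (resp.\ finite presentability). Backward: if $G'$ is $FP_m$ (so in particular finitely generated), then Theorem~\ref{commutator} gives $W=W(G)$ finitely generated; one then identifies $\X(G)'/W=\operatorname{Im}(\rho)'=G'\times G'\times G'$, which is $FP_m$ (resp.\ finitely presented), and since $W$ is finitely generated abelian (hence $FP_\infty$ and finitely presented), the extension $1\to W\to \X(G)'\to G'\times G'\times G'\to 1$ yields $\X(G)'$ of type $FP_m$ (resp.\ finitely presented). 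This single argument handles every $m\ge 1$ without any appeal to $\Sigma$-invariants, which is precisely what your proposal lacks.
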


Inspired by Theorem F1 and Theorem F2 we suggest the following conjecture.

\begin{ConI}  Let $G$ be a group of type $FP_2$ ( resp. finitely presented). Then identifying $S(\X(G))$ with $S(\X(G)/ W(G))$ via the epimorphism $\X(G) \to \X(G)/ W(G)$ we have the equality $$\Sigma^2(\X(G), \Z) = \Sigma^2(\X(G)/ W(G),\Z) \hbox{ (resp. } \Sigma^2(\X(G)) = \Sigma^2(\X(G)/ W(G)) ).$$
\end{ConI}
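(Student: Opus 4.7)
The inclusion $\Sigma^2(\X(G), \Z) \subseteq \Sigma^2(\X(G)/W(G), \Z)$ is already part (a) of Theorem F1, so the plan is to establish the reverse inclusion for arbitrary $G$ of type $FP_2$, dropping the finite-generation hypothesis on $W = W(G)$ used in part (b). Theorem F1(b) shows that finite generation of $W$ suffices, so there are two natural routes: either (i) prove that $W$ is always finitely generated when $G$ is of type $FP_2$, which would reduce the conjecture to Theorem F1(b) and would strengthen the main result of \cite{KochSidki}; or (ii) argue directly, bypassing the global finite-generation question. I outline a plan along route (ii), with (i) as a fall-back. The homotopical statement should follow from the homological one by an analogous lifting argument in the category of algebraic mapping cylinders, using Theorem F2 in place of Theorem F1.

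\textbf{Main approach.} Let $Q = \X(G)/W$ and fix $[\widehat{\chi}] \in \Sigma^2(Q, \Z)$. Lift a finite partial projective resolution $P_2 \to P_1 \to P_0 \to \Z \to 0$ of $\Z$ over $\Z Q_{\widehat{\chi}}$ to a chain complex $\widetilde{P}_\bullet$ of finitely generated free $\Z \X(G)_\chi$-modules via the surjection $\Z \X(G)_\chi \twoheadrightarrow \Z Q_{\widehat{\chi}}$, which makes sense because $\chi$ vanishes on $W$. The obstruction to $[\chi] \in \Sigma^2(\X(G), \Z)$ is captured by the first homology of $\widetilde{P}_\bullet$, and a diagram chase identifies this obstruction with the finite generation of $W$ as a $\Z Q_{\widehat{\chi}}$-module (via the conjugation action), rather than as a $\Z Q$-module. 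The next step is to exploit the case analysis of Theorem E1. In cases 1, 2 and 3 the character $\chi$ factors through a single projection $\pi_i : \X(G) \to G$, so the $Q_{\widehat{\chi}}$-action on $W$ reduces to an action through a single copy of $G_\psi$ with $[\psi] \in \Sigma^2(G, \Z)$, and one handles this via the Sidki commutator relations together with the $FP_2$ condition on $G$. In case 4, with $\chi_1, \chi_2 \neq 0$ and $\chi_1 \neq \chi_2$, the three $\Sigma^1$-conditions on the sub-characters should allow one to rewrite every Sidki generator of $W$ as a $\Z Q_{\widehat{\chi}}$-linear combination of finitely many standard generators, by an argument parallel to the one used in the proof of Theorem E1.

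\textbf{Main obstacle.} The delicate point is the case-4 analysis: because $W$ is only understood through the non-abelian tensor-square machinery, and because the $Q_{\widehat{\chi}}$-action on $W$ mixes the two characters $\chi_1$ and $\chi_2$, a uniform finite-generation argument for $W$ over $\Z Q_{\widehat{\chi}}$ requires sharper control of Sidki's braiding relations than is currently available. Concretely, one needs a $\Sigma$-theoretic strengthening of the finite-generation results for the non-abelian tensor square $G \otimes G$ announced in the abstract, transplanted into a setting where $\chi_1 - \chi_2$ also contributes a positive half-space. Either route ultimately returns to the structure of $W$, and it is precisely this structure that has resisted a complete description in the existing literature; any proof of the conjecture will almost certainly have to make a new contribution on this front.
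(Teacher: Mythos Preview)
The statement you are attempting to prove is \emph{Conjecture~I} in the paper, not a theorem: the paper explicitly presents it as an open problem and does not give a proof. What the paper does establish is one inclusion (Theorems~F1(a) and~F2(a)) and the reverse inclusion only under the extra hypothesis that $W(G)$ is finitely generated (Theorems~F1(b) and~F2(b)), together with verification in special cases (e.g.\ when $G'/G''$ is finitely generated, or when $G$ is a non-abelian limit group). So there is nothing to compare your argument against.

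Your write-up is an honest research outline rather than a proof: you correctly identify that the obstruction is the finite generation of $W$ over $\Z Q_{\widehat{\chi}}$, but your ``Main obstacle'' paragraph acknowledges that the case-4 analysis is not actually carried out, and that completing it would require new structural information about $W$ beyond what is in the literature. That is an accurate assessment of the difficulty, and it is exactly why the paper leaves this as a conjecture. In particular, route~(i) would amount to removing the hypothesis on $G'/G''$ from the Kochloukova--Sidki theorem, which is itself an open question, and route~(ii) has the gap you yourself flag. Neither route, as written, closes to a proof.
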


Note that by Theorem F1 and Theorem F2 we have that Conjecture I holds when $G'/ G''$ is finitely generated. 
Note that Theorem E1 implies that
for a non-abelian limit group $G$ we have $\Sigma^2(\X(G) / W(G), \Z) = \emptyset$, so the Conjecture I holds for non-abelian limit groups. Another way to state the Conjecture I is that in Corollary G in parts b) and c) the inclusions are equalities.

In Section \ref{section-tensor} we consider the non-abelian tensor square $G \otimes G$ of a group $G$ and the contruction $\nu(G)$. In  \cite{Rocco} Rocco defined for an arbitrary group $G$ the  group $\nu(G)$. In \cite{Ellis} Ellis and  Leonard studied a similar construction. The construction $\nu(G)$ is strongly related to the construction $\X(G)$ in the following way : there is a central subgroup $\Delta$ of $\nu(G)$ such that
$$\nu(G) / \Delta \simeq \X(G)/ R(G)$$
where $R(G)$ is a normal subgroup of $\X(G)$ such that $W(G)/ R(G) \simeq H_2(G, \mathbb{Z})$.
By \cite{Rocco}  the non-abelian tensor square $G \otimes G$ is isomorphic to the subgroup $[G, \overline{G}]$ of $\nu(G)$. 
Using properties of $\X(G)/ W$ we show the following result.

\begin{propJ}
Let $G$ be a finitely generated group such that the commutator subgroup $G'$ is finitely presented (resp. is $FP_2$). Then the non-abelian tensor square $G \otimes G$ is finitely presented (resp. is $FP_2$).
\end{propJ}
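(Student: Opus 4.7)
The plan is to exploit the isomorphism $\nu(G)/\Delta\cong\X(G)/R(G)$ to realise $G\otimes G\cong[G,\overline G]_{\nu(G)}$ as a central extension of a finitely presented subgroup of $\X(G)/R(G)$. I would first bootstrap the hypothesis: since $G$ is finitely generated and $G'$ is finitely presented (resp.\ $FP_2$), the extension $1\to G'\to G\to G^{ab}\to 1$ with finitely generated abelian quotient forces $G$ itself to be finitely presented (resp.\ $FP_2$). Corollary H then guarantees that $W=W(G)$ is finitely generated. Next I would identify the subgroup $L=[G,\overline G]_{\X(G)}$: modding out $L$ forces $G$ and $\overline G$ to commute, so $\X(G)/L\cong G\times G$, whence $L=\ker\pi_1\cap\ker\pi_2$. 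Combined with $W=\ker\pi_1\cap\ker\pi_2\cap\ker\pi_3$, this gives $W\subseteq L$ and $L/W\cong\pi_3(L)=[G,G]=G'$. Using $W/R\cong H_2(G,\Z)$, the sequence $1\to H_2(G,\Z)\to L/R\to G'\to 1$ has finitely generated abelian kernel (since $G$ is $FP_2$) and finitely presented (resp.\ $FP_2$) quotient, so $L/R$ is finitely presented (resp.\ $FP_2$).

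Next I would lift to $\nu(G)$. The surjection $\nu(G)\to\X(G)/R$ of central kernel $\Delta$ carries $G\otimes G$ onto $L/R$, and the kernel of $G\otimes G\twoheadrightarrow L/R$ is $\Delta$ itself (since $R\subseteq W\subseteq L$ implies that the preimage of $R$ in $\nu(G)$ lies inside $G\otimes G$). This yields a central extension
\[
1\to\Delta\to G\otimes G\to L/R\to 1.
\]
Writing $K=\langle[g,\overline g]:g\in G\rangle$ for the kernel of the canonical surjection $\nu(G)\to\X(G)$, one has $\Delta/K\cong R$, a subgroup of the finitely generated abelian group $W$. So if $K$ itself is finitely generated, then $\Delta$ is a finitely generated central abelian subgroup of $\nu(G)$, hence finitely presented, and the displayed extension forces $G\otimes G$ to be finitely presented (resp.\ $FP_2$).

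The main obstacle is therefore to verify that $K=\nabla(G)$ is finitely generated as an abelian group whenever $G$ is finitely generated. This is a classical fact for the non-abelian tensor square, traceable to the work of Ellis and Rocco; it can be extracted from the Brown--Loday exact sequence $\Gamma(G^{ab})\to J_2(G)\to H_2(G,\Z)\to 0$, whose outer terms are both finitely generated abelian when $G$ is $FP_2$. Alternatively, one can use the centrality of $K$ together with the standard commutator identities in $\nu(G)$ to show directly that $K$ is generated by the finite set $\{[g_i,\overline{g_i}]\}\cup\{[g_i,\overline{g_j}][g_j,\overline{g_i}]\}$ for any finite generating set $\{g_i\}$ of $G$.
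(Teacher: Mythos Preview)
Your overall strategy---building $G\otimes G$ as a central extension of a finitely presented group by a finitely generated abelian kernel---is the same as the paper's, but the step where you control $\Delta$ is confused. You posit a canonical surjection $\nu(G)\to\X(G)$ with kernel $K=\langle[g,\overline g]:g\in G\rangle$ and then deduce $\Delta/K\cong R$. No such surjection is provided in the paper (only the isomorphism $\nu(G)/\Delta\cong\X(G)/R$ is recorded), and in Rocco's conventions one in fact has $\Delta=\langle[g,\overline g]\rangle$; that is, your $K$ \emph{is} $\Delta$, so the filtration $K\subseteq\Delta$ collapses and the assertion $\Delta/K\cong R$ becomes vacuous rather than informative. (The Brown--Loday appeal is also misdirected: the group $J_2(G)$ in that sequence is the kernel of $G\otimes G\to G'$, not $\nabla(G)$; the relevant fact is rather that $\nabla(G)$ is a quotient of $\Gamma(G^{ab})$.) The paper bypasses all of this: since $1\to\Delta\to\nu(G)\to\X(G)/R\to1$ is a central extension with $\Delta\subseteq\nu(G)'$, the group $\Delta$ is a quotient of $H_2(\X(G)/R,\Z)$; and $\X(G)/R$ is $FP_2$ because it is an extension of the $FP_2$ group $\X(G)/W$ by the finitely generated abelian group $W/R\cong H_2(G,\Z)$. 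This yields $\Delta$ finitely generated directly, with no need for a map $\nu(G)\to\X(G)$, for Corollary~H, or for $W$ itself to be finitely generated.

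That said, your explicit generator argument for $\nabla(G)$ is correct as a standalone statement, and once one accepts $\Delta=\nabla(G)$ it already gives what is needed; so the gap is a misreading of the $\nu(G)$--$\X(G)$ relationship rather than a fatal error. After $\Delta$ is handled, the paper packages the remaining two steps into one: setting $W_0$ to be the full preimage of $W/R$ in $\nu(G)$, it uses the single extension $1\to W_0\to G\otimes G\to G'\to1$ with $W_0$ finitely generated nilpotent of class at most~$2$, hence finitely presented. Your pair of extensions $1\to\Delta\to G\otimes G\to D/R\to1$ and $1\to H_2(G,\Z)\to D/R\to G'\to1$ amounts to the same filtration split differently. (A notational warning: the paper writes $D$, not $L$, for $[G,\overline G]_{\X(G)}$; the symbol $L$ is reserved for $\langle g\overline g^{-1}\rangle$.)
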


In section \ref{section-tensor} we determine the invariants $\Sigma^1(\nu(G))$, $\Sigma^2(\nu(G), \mathbb{Z})$ and $\Sigma^2(\nu(G))$ and identify them with the corresponding invariants of $\X(G)/ W$.

\section{Preliminaries on the $\Sigma$-invariants}

In \cite{Wall} Wall defined a group $G$ to be of  homotopical type $F_n$  if there is a classifying space $K(G,1)$ with finite $n$-skeleton. The homotopical type $F_2$ coincides with  finite presentability (in terms of generators and relations). A homological version of this property, called $FP_n$, was defined by Bieri in \cite{Bieribook}.
A group $G$ is of homological type $FP_n$ if the trivial $\mathbb{Z} G$-module $\mathbb{Z}$ has a projective resolution with all modules finitely generated in dimension $\leq n$.

  Higher dimensional homological invariants $\Sigma^n(G, A)$ for a $\Z G$-module $A$ were defined by Bieri and Renz in \cite{B-Renz}, where they showed  that   $\Sigma^n(G, \Z)$ controls which subgroups of $G$ that contain the commutator are of homological type $FP_n$. In \cite{Renzthesis} Renz defined  the higher dimensional homotopical invariant $\Sigma^n(G)$ for groups $G$ of homotopical type $F_n$ and similar to the homological case $\Sigma^n(G)$ controls the homotopical finiteness properties of the subgroups of $G$  above the commutator. In all cases the $\Sigma$-invariants are open subsets of the character sphere $S(G)$.
For a group $G$ of type $F_n$ we have $\Sigma^n(G) = \Sigma^n (G, \Z) \cap \Sigma^2(G)$.
The description of the $\Sigma$-invariants of right-angled Artin groups by Meier, Meinert and  Van Wyk shows  that   the inclusion $\Sigma^n(G) \subseteq \Sigma^n(G, \mathbb{Z})$ is not necessary an equality for $n \geq 2$ \cite{Meinert-VanWyk}.
For the homotopical invariants we note that $\Sigma^1(G) = \Sigma^1(G, \Z)$ and in general $\Sigma^n(G) \subseteq \Sigma^n(G, \Z)$.

By definition a character $\chi : G \to \R$ is a non-zero homomorphism and $\Sigma^n(G, \Z)$ is a subset of the character sphere  $S(G)$.  The character sphere $S(G)$ is the set of equivalence classes $[\chi]$ of characters $\chi : G \to \R$, where 
two characters $\chi_1$ and $\chi_2$ are equivalent if one is obtained from the other by multiplication with any positive real number. For a fixed character $\chi : G \to \R$ define
$$
G_{\chi} = \{ g \in G \mid \chi(g) \geq 0 \}.
$$
Recall that for an associative ring $R$ and $R$-module $A$ we say that $A$ is of type $FP_n$ over $R$ if $A$ has a projective resolution over $R$ where all projectives in dimension up to $n$ are finitely generated i.e. there is an exact complex
$$
{\mathcal P} : \ldots \to P_i \to P_{i-1} \to \ldots \to P_0 \to A \to 0,
$$
where each $P_j$ is a projective $R$-module and for $i \leq n$ we have that $P_i$ is finitely generated.

 Let  $D$ be an integral domain.
By definition for a (left)  $DG$-module $A$ $$\Sigma^n_D(G, A) = \{[\chi] \in S(G) \mid A \textrm{ is of type $FP_n$ as $DG_{\chi}$-module}\}.$$ When $A$ is the trivial (left) $DG$-module $D$, we denote by $ \Sigma^n(G, D)$ the invariant $ \Sigma^n_D(G, D)$.

  Note that if $\Sigma^n(G, \Z)$ is non-empty set then $G$ is $FP_n$ i.e. is finitely generated.  Later we will need the description of $\Sigma^1(G)$ given by the Cayley graph of  a finitely generated group $G$. Let $X$ be a finite generating set of $G$. Consider the Cayley graph $\Gamma$ of $G$ associated with the generating set $X$ i.e. the set of vertices is $V(\Gamma) = G$ and the set of edges is  $E(\Gamma) = X \times G$ with the edge $e = (x, g)$ having beginning $g$ and end $g x$. The group $G$ acts on $\Gamma$ via left multiplication on $V(\Gamma)$ and $h. e = (x, hg) $ for any $h \in G$. The letter $x$ is called the label of the edge $e$ and we write $(x^{-1} , gx)$ for the inverse of $e$ and call $x^{-1}$ the label of $e^{-1}$. For a fixed character $\chi : G \to \R$ we write $\Gamma_{\chi}$ for the subgraph of $\Gamma$ spanned by the vertices in $G_{\chi}$. By definition
$$
\Sigma^1(G) = \{ [\chi] \in S(G) \mid \Gamma_{\chi} \hbox{ is  a connected graph}\}.
$$ 
Suppose now that $G$ is finitely presented with a finite presentation $\langle X | R \rangle$. Gluing to the Cayley graph $\Gamma$ 2-cells at every vertex that spell out the relations of $R$ we get that Cayley complex $\mathcal{C}$  associated to the above finite presentation. For a fixed character $\chi : G \to \R$ we write $\mathcal{C}_{\chi}$ for the subcomplex of $\mathcal{C}$ spanned by the vertices in $G_{\chi}$. By definition
$$
\Sigma^2(G) = \{ [\chi] \in S(G) \mid \hbox{ there is a finite presentation for which } \mathcal{C}_{\chi} \hbox{ is  1-connected }\}.
$$ 

The first result is folclore, it is an obvious corollary of the fact that $\Sigma^1(G, \Z) = \Sigma^1(G)$ and tensoring is right exact functor.

\begin{lemma} \label{quotient-sigma1} Let $\pi : G_1 \to G_2$ be an epimorphism of finitely generated groups, $\mu_2 : G_2 \to \mathbb{R}$ be a character (i.e. non-zero homomorphism) and $\mu_1 = \mu_2 \circ \pi$. Suppose that $[\mu_1] \in \Sigma^1(G_1)$. Then $[\mu_2] \in \Sigma^1(G_2)$.
\end{lemma}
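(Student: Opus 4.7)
My plan is to exploit the homological reformulation hinted at in the statement: under the identification $\Sigma^1(G) = \Sigma^1(G,\Z)$, a class $[\mu] \in S(G)$ lies in $\Sigma^1(G)$ exactly when the trivial module $\Z$ is finitely generated as a $\Z G_{\mu}$-module. Starting from $[\mu_1] \in \Sigma^1(G_1)$, I obtain finitely many generators of $\Z$ over $\Z (G_1)_{\mu_1}$, and the task reduces to transporting this information to $\Z (G_2)_{\mu_2}$ along the ring map induced by $\pi$.

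The first key step is to verify that $\pi$ restricts to a surjective monoid map $(G_1)_{\mu_1} \twoheadrightarrow (G_2)_{\mu_2}$. Containment into $(G_2)_{\mu_2}$ is immediate from $\mu_1 = \mu_2 \circ \pi$, and surjectivity uses crucially that $\pi$ itself is an epimorphism: any $h \in G_2$ with $\mu_2(h) \geq 0$ admits a preimage $g \in G_1$, and this $g$ automatically satisfies $\mu_1(g) = \mu_2(h) \geq 0$, hence $g \in (G_1)_{\mu_1}$. Consequently the induced ring homomorphism $\Z (G_1)_{\mu_1} \to \Z (G_2)_{\mu_2}$ is surjective.

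Next I apply the right-exact base-change functor $\Z (G_2)_{\mu_2} \otimes_{\Z (G_1)_{\mu_1}} -$ to the trivial module $\Z$. Being right exact and compatible with finite direct sums, it preserves finite generation, so the output is a finitely generated $\Z (G_2)_{\mu_2}$-module. To identify it with the trivial module, I write $\Z$ as the cokernel of the inclusion of the augmentation ideal $I \hookrightarrow \Z (G_1)_{\mu_1}$; tensoring yields the cokernel of the image of $I$ in $\Z (G_2)_{\mu_2}$, and by the surjectivity established above this image is precisely the augmentation ideal of $\Z (G_2)_{\mu_2}$. Hence the base change is $\Z$ as a $\Z (G_2)_{\mu_2}$-module, giving $[\mu_2] \in \Sigma^1(G_2, \Z) = \Sigma^1(G_2)$.

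I do not anticipate any real obstacle, as the argument is a straightforward unwinding of definitions. The only delicate point is the surjectivity of the monoid restriction of $\pi$, which is exactly what makes the identification of the base change with the trivial module succeed; without the hypothesis that $\pi$ is an epimorphism of groups, the tensor product would collapse only to a proper quotient of $\Z (G_2)_{\mu_2}$ and the conclusion would fail.
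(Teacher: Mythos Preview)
Your argument is correct and follows precisely the approach the paper indicates: the paper states the lemma as folklore, remarking only that it is an obvious corollary of the identification $\Sigma^1(G,\Z)=\Sigma^1(G)$ together with right exactness of tensoring, and you have simply written out these two ingredients in full detail. The one step you make explicit that the paper leaves implicit is the surjectivity of the restricted monoid map $(G_1)_{\mu_1}\to (G_2)_{\mu_2}$, which is exactly what is needed for the base change of the trivial module to remain trivial.
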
 

We warn the reader that the previous lemma does not hold for $\Sigma^2$.

\begin{theorem} \cite[Thm.~ 9.3]{Meinert-VanWyk} \label{teo-Sigma.de.gr.=.Sigma.de.subgr.}
	Let $H$ be a subgroup of $G$, $M$ be a $DG$-module and $\xi: G \rightarrow \R$ be a character. If $[G:H] < \infty$ then $$[\xi|_H] \in \Sigma^n_D(H,A) \Leftrightarrow [\xi] \in \Sigma^n_D(G,A).$$
	In particular, if $n=0$, then $$A \textrm{ is a finitely generated } DG_{\xi} \textrm{-module } \Leftrightarrow A \textrm{ is a finitely generated } DH_{\xi|_H} \textrm{-module}.$$
\end{theorem}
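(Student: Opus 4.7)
The plan is to prove the equivalence by handling the case $n=0$ (the ``in particular'' clause) directly via coset representatives, and then extending to $n\geq 1$ using the homological characterisation of higher $\Sigma$-invariants together with dimension-shifting.

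I first observe that $\xi|_H\neq 0$: since $[G:H]$ is finite, each $g\in G$ satisfies $g^{[G:H]!}\in H$, so $\xi|_H=0$ would force $[G:H]!\,\xi(g)=0$ for every $g$ and hence $\xi\equiv 0$, contradicting the assumption that $\xi$ is a character.

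For $n=0$, I decompose $G=\bigsqcup_{j=1}^k Ht_j$ into right cosets. Since $\xi|_H\neq 0$ the image $\xi(H)\subseteq\R$ is unbounded below, so after replacing each $t_j$ by $h_jt_j$ for suitable $h_j\in H$ I may assume $\xi(t_j)\leq 0$ for all $j$. The direction $(\Leftarrow)$ is immediate from the ring inclusion $DH_{\xi|_H}\subseteq DG_\xi$. For $(\Rightarrow)$, suppose $a_1,\dots,a_m$ generate $A$ as a $DG_\xi$-module; I claim the finite set $\{t_ja_i : 1\leq i\leq m,\ 1\leq j\leq k\}$ generates $A$ as a $DH_{\xi|_H}$-module. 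Indeed, for any $g\in G_\xi$ the unique $j$ with $g\in Ht_j$ yields $h=gt_j^{-1}\in H$ with $\xi(h)=\xi(g)-\xi(t_j)\geq -\xi(t_j)\geq 0$, so $h\in H_{\xi|_H}$; hence $g\cdot a_i=h\cdot(t_ja_i)\in DH_{\xi|_H}\cdot(t_ja_i)$.

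For $n\geq 1$, I would induct on $n$ using the standard fact that $A$ is of type $FP_n$ over a ring $R$ if and only if there is a short exact sequence $0\to K\to F\to A\to 0$ with $F$ a finitely generated free $R$-module and $K$ of type $FP_{n-1}$ over $R$. Starting from a finitely generated free $DG$-module cover $F=DG^r\twoheadrightarrow A$ (which exists because the $n=0$ case shows $A$ is finitely generated as a $DG$-module), I would use the Bieri--Renz-style characterisation of $\Sigma^n$ via a height-filtered free $DG$-resolution of $A$. The classical Bieri finite-index transfer says $DG$ is free of rank $[G:H]$ over $DH$, so restriction of a finitely generated free $DG$-resolution yields a finitely generated free $DH$-resolution; combining this with the translation of the ``$FP_n$ over $DG_\xi$'' condition into an essentially $(n-1)$-connectedness property of the $\xi$-positive subcomplex gives the equivalence of $[\xi]\in\Sigma^n_D(G,A)$ and $[\xi|_H]\in\Sigma^n_D(H,A)$.

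The main obstacle is the inductive step itself. The $n=0$ argument relies crucially on the $DG$-module structure of $A$ in order to interpret $t_ja_i$, whereas the kernel $K$ appearing in a resolution of $A$ over $DG_\xi$ is not naturally a $DG$-module, so one cannot iterate the $n=0$ argument directly. The Meinert--Van Wyk proof sidesteps this by working throughout with $DG$-free resolutions of $A$ and translating the $FP_n$ condition over $DG_\xi$ (respectively $DH_{\xi|_H}$) into a combinatorial essential-connectedness condition on such resolutions, which then transfers transparently under restriction to a finite-index subgroup.
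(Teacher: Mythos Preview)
The paper does not give its own proof of this theorem: it is quoted verbatim as \cite[Thm.~9.3]{Meinert-VanWyk} and used as a black box, so there is nothing in the paper to compare your argument against.

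On the substance of your proposal: the $n=0$ case is handled correctly. The observation that $\xi|_H\neq 0$ is right, and the coset-representative trick (choosing $t_j$ with $\xi(t_j)\leq 0$ so that $gt_j^{-1}\in H_{\xi|_H}$ whenever $g\in G_\xi$) is exactly the standard argument.

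For $n\geq 1$, however, what you have written is not a proof but a diagnosis. You correctly identify the obstruction to a naive induction --- the kernel of a $DG_\xi$-presentation of $A$ need not carry a $DG$-action, so the hypothesis ``$A$ is a $DG$-module'' is lost after one step of dimension-shifting --- and you correctly summarise how the Meier--Meinert--VanWyk argument avoids this, namely by fixing once and for all a free $DG$-resolution of $A$ and expressing the $\Sigma^n$-condition as an essential-acyclicity statement for the $\xi$-nonnegative subcomplex. But you do not carry that argument out. If you want a self-contained proof, you need to actually state and verify the relevant essential-connectedness criterion (e.g.\ that $[\xi]\in\Sigma^n_D(G,A)$ iff for some/any free $DG$-resolution $F_\bullet\to A$ with each $F_i$ finitely generated for $i\leq n$, the valuation subcomplex $(F_\bullet)_{v\geq 0}$ is essentially $(n-1)$-acyclic), and then check that this criterion is invariant under passing to $H$ of finite index, using that each $F_i$ restricts to a finitely generated free $DH$-module and that the valuation subcomplexes for $\xi$ and $\xi|_H$ are commensurable. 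As it stands, the inductive step is only sketched, and the proposal is incomplete for $n\geq 1$.
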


In \cite{B-G} Bieri and Geoghegan proved  a formula for the homological  invariants $\Sigma^n( - , F)$ for a direct product of groups, where $F$ is the trivial module and is a field.  If $F$ is substituted with the trivial module $\Z$ the result is wrong in both homological and homotopical settings provided the dimension is sufficiently high, see \cite{Meinert-VanWyk} and \cite{Schutz}. 

\begin{theorem}  {\bf Direct product formula} \cite[Thm.~ 1.3, and Prop.~ 5.2]{B-G} \label{teo-conjec.prod.dir.corpo}
	Let $n \geq 0$ be an integer, $G_1, G_2$ be finitely generated groups and $F$ be a field. Then, $$\Sigma^n(G_1 \times G_2, F)^c = \displaystyle\bigcup_{p=0}^n \Sigma^p(G_1, F)^c * \Sigma^{n-p}(G_2, F)^c,$$ where $*$ denotes the join of subsets  of the character sphere $S(G_1 \times G_2)$ and $^c$ denotes the set-theoretic complement of subsets of a suitable character sphere.
\end{theorem}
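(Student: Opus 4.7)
The plan is to deduce the formula from a Novikov-ring reformulation of the $\Sigma$-invariants combined with the Künneth theorem over the field $F$. For a character $\chi: G\to \R$, let $\widehat{FG}_\chi$ denote the Novikov completion of the group ring, consisting of formal sums $\sum a_g g$ whose support meets every half-space $\{g : \chi(g)\le c\}$ in a finite set. A theorem of Sikorav, with an algebraic formulation due to Bieri, translates the $\Sigma$-invariant into a Tor-vanishing statement: $[\chi]\in\Sigma^n(G,F)$ if and only if $\mathrm{Tor}_i^{FG}(\widehat{FG}_{-\chi},F)=0$ for $0\le i\le n$. The first step is to record this reformulation carefully; it is what turns the geometric invariant into a piece of homological algebra that Künneth can act on.

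Given a character $\chi$ on $G=G_1\times G_2$, write $\chi_i$ for the restriction of $\chi$ to the subgroup $G_i$. The core technical step is to show that, up to a faithfully flat base change of $F$-algebras, the Novikov ring of the product is a completed tensor product of the Novikov rings of the factors. Granting this, one applies the Künneth theorem over $F$ to the bar resolution of $F(G_1\times G_2)\cong FG_1\otimes_F FG_2$, obtaining a natural isomorphism
$$\mathrm{Tor}_n^{F(G_1\times G_2)}\!\bigl(\widehat{F(G_1\times G_2)}_{-\chi},F\bigr)\;\cong\;\bigoplus_{p+q=n}\mathrm{Tor}_p^{FG_1}\!\bigl(\widehat{FG_1}_{-\chi_1},F\bigr)\otimes_F\mathrm{Tor}_q^{FG_2}\!\bigl(\widehat{FG_2}_{-\chi_2},F\bigr).$$
Because every $F$-vector space is flat, the vanishing of the left-hand side in all degrees $0,\dots,n$ is equivalent to the assertion that for every partition $p+q=n$ at least one of the two tensor factors vanishes, i.e.\ $[\chi_1]\in\Sigma^p(G_1,F)$ or $[\chi_2]\in\Sigma^q(G_2,F)$.

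To finish, I pass to complements and use that the character sphere of a direct product is naturally the topological join $S(G_1\times G_2)=S(G_1)*S(G_2)$, with the class of $\chi$ corresponding to the join parameter determined by the normalised pair $([\chi_1],[\chi_2])$. The logical condition obtained above is exactly the condition defining the join of complements, yielding
$$\Sigma^n(G_1\times G_2,F)^c=\bigcup_{p=0}^{n}\Sigma^p(G_1,F)^c*\Sigma^{n-p}(G_2,F)^c.$$
The degenerate characters with $\chi_1=0$ or $\chi_2=0$ are handled by observing that $\widehat{FG_i}_{0}=FG_i$, so one of the two factors contributes only ordinary group homology of $G_i$, and this recovers the extreme terms $p=0$ and $p=n$ of the join correctly.

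The step I expect to be the main obstacle is the Künneth identification for Novikov completions: concretely, that the tensor product (over $F$) of projective resolutions of the trivial modules over $FG_1$ and $FG_2$, after extension of scalars to the Novikov ring of $G_1\times G_2$, remains exact and computes the relevant Tor. It is precisely the field hypothesis that makes every $F$-module flat and allows the completions to interact well with tensor products; over $\Z$ or in the homotopical setting the analogous identification fails, in agreement with the counterexamples of \cite{Meinert-VanWyk} and \cite{Schutz} noted just above.
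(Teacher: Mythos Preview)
The paper does not prove this statement: Theorem~\ref{teo-conjec.prod.dir.corpo} is quoted from Bieri--Geoghegan \cite{B-G} as a background result, with no argument supplied. There is therefore no proof in the present paper to compare your proposal against; you are attempting an independent proof of a cited theorem.

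On the merits of the proposal itself: the Novikov-homology reformulation of $\Sigma^n(-,F)$ is a legitimate and known route to results of this kind, and you have correctly isolated the crux. But the displayed K\"unneth isomorphism is precisely where the content lies, and you have not discharged it. The Novikov ring $\widehat{F(G_1\times G_2)}_{-\chi}$ is \emph{not} the algebraic tensor product $\widehat{FG_1}_{-\chi_1}\otimes_F\widehat{FG_2}_{-\chi_2}$; it is a completion of that tensor product with respect to the $\chi$-filtration, and the ordinary K\"unneth theorem for $F(G_1\times G_2)=FG_1\otimes_F FG_2$ does not by itself yield the factorisation you wrote. Your phrase ``up to a faithfully flat base change'' names the difficulty without resolving it: one would have to prove that passing from the uncompleted tensor product of Novikov rings to the completed one leaves the vanishing pattern of the relevant Tor groups unchanged in every degree $\le n$, and for irrational $\chi$ with $\chi_1,\chi_2$ both nonzero this is not automatic --- indeed it is essentially equivalent in strength to the theorem itself. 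So the proposal, as written, has a genuine gap exactly at the step you flagged as the main obstacle. The argument in \cite{B-G} proceeds differently, working directly with the $FP_n$-over-$FG_\chi$ definition and filtered resolutions rather than Novikov completions; the field hypothesis enters through an algebraic splitting that plays the role your K\"unneth step is meant to play.
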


The above theorem means that if $\mu : G_1 \times G_2 \to \R$ is a character with $\mu_1 = \mu \mid_{G_1}$ and  $\mu_2 = \mu \mid_{G_2}$ then $[\mu] \in \Sigma^n(G_1 \times G_2, F)^c = S(G_1 \times G_2, F) \setminus \Sigma^n(G_1 \times G_2, F)$ precisely when one of the following conditions hold :  

1. $\mu_1 \not=0$, $\mu_2 \not= 0$ and $[\mu_1] \in \Sigma^p(G_1, F)^c = S(G_1) \setminus \Sigma^p(G_1, F), [\mu_2] \in \Sigma^{n-p} (G_2, F)^c = S(G_2) \setminus \Sigma^{n-p}(G_2, F)$ for some $0 \leq p \leq n$;

or 

2. one of the characters $\mu_1$, $\mu_2$ is trivial and for the non-trivial one, say $\mu_i$, we have $[\mu_i] \in \Sigma^n(G_i, F)^c = S(G_i) \setminus \Sigma^n(G_i, F)$.

Though Theorem \ref{teo-conjec.prod.dir.corpo} does not hold in general when $F$ is not a field, it holds in small dimensions $1 \leq n \leq 2$.

\begin{theorem} \cite{G}  \label{teo-conjec.prod.dir.corpo2}
	Let $1 \leq n \leq 2$ be an integer, $G_1, G_2$ be finitely generated groups. Then, $$\Sigma^n(G_1 \times G_2, \Z)^c = \displaystyle\bigcup_{p=0}^n \Sigma^p(G_1, \Z)^c * \Sigma^{n-p}(G_2, \Z)^c,$$ where $*$ denotes the join of subsets of the character sphere  $S(G_1 \times G_2)$ and $^c$ denotes the set-theoretic complement of subsets of a suitable character sphere.
\end{theorem}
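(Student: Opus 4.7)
The plan is to prove the theorem by first simplifying the right-hand side: since $\Z$ is cyclic as a $\Z G_\chi$-module, $\Sigma^0(G, \Z) = S(G)$ for any finitely generated $G$ and hence $\Sigma^0(G, \Z)^c = \emptyset$. Under the natural convention that $\emptyset * B$ consists of characters supported on the factor carrying $B$, the join decomposition reduces to the concrete statement: $[\chi] \in \Sigma^n(G_1 \times G_2, \Z)^c$ iff either (i) some $\chi_i := \chi|_{G_i} = 0$ and $[\chi_{3-i}] \in \Sigma^n(G_{3-i}, \Z)^c$, or (ii) both $\chi_1, \chi_2 \neq 0$ and there is $0 < p < n$ with $[\chi_1] \in \Sigma^p(G_1, \Z)^c$ and $[\chi_2] \in \Sigma^{n-p}(G_2, \Z)^c$.

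For $n = 1$ the condition (ii) is vacuous and the statement reduces to the classical Bieri--Neumann--Strebel direct product formula, which follows from the Cayley graph criterion: if $\chi_1 = 0$ then the $\chi$-positive subgraph of $G_1 \times G_2$ splits as $G_1 \times \Gamma_{2, \chi_2}$ and connectivity reduces to the factor; if both $\chi_i \neq 0$ a Meinert-style argument (compensating negative steps in one factor by lifting paths through positive elements of the other) yields connectivity with no $\Sigma^1$ hypothesis on either factor.

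For $n = 2$ I would prove the two inclusions separately. For $\Sigma^2(G_1 \times G_2, \Z)^c \supseteq \text{RHS}$, the only non-degenerate case is both $\chi_i \neq 0$ with $[\chi_1] \notin \Sigma^1(G_1)$ and $[\chi_2] \notin \Sigma^1(G_2)$; here I would construct an obstruction to $FP_2$ via a K\"unneth-type spectral sequence that forces the non-finite-generation of $H_1(G_i; \Z G_{i,\chi_i})$ to propagate to $H_2(G_1 \times G_2; \Z (G_1 \times G_2)_\chi)$. For $\Sigma^2(G_1 \times G_2, \Z)^c \subseteq \text{RHS}$, I would prove the contrapositive: assuming both $\chi_i \neq 0$ with (WLOG) $[\chi_1] \in \Sigma^1(G_1)$, build a partial free resolution of $\Z$ as $\Z (G_1 \times G_2)_\chi$-module finitely generated up to degree $2$, by tensoring finitely generated partial resolutions over $\Z G_{1, \chi_1}$ (up to degree $1$) and $\Z G_{2, \chi_2}$ (up to degree $0$, trivially) and then extending scalars along $G_{1,\chi_1} \times G_{2,\chi_2} \hookrightarrow (G_1 \times G_2)_\chi$.

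The main obstacle will be the scalar-extension step: the inclusion $G_{1, \chi_1} \times G_{2, \chi_2} \subsetneq (G_1 \times G_2)_\chi$ is strict, so verifying that the induced resolution remains finitely generated requires a delicate combinatorial argument showing that the ``extra'' elements of the big monoid lie in finitely many orbits of the small one --- and this is precisely where the $\Sigma^1$-hypothesis on one factor is used, via a Meinert-style compensation as in the $n = 1$ case.
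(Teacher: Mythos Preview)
The paper does not prove this statement; it is quoted from Gehrke \cite{G} as a preliminary result. So there is no ``paper's own proof'' to compare against, and your proposal must stand on its own.

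Your treatment of $n=1$ and of the inclusion $\supseteq$ for $n=2$ is fine in outline. The gap is in the $\subseteq$ direction for $n=2$. You propose, when both $\chi_i\neq 0$ and (say) $[\chi_1]\in\Sigma^1(G_1)$, to tensor a partial free resolution of $\Z$ over $\Z G_{1,\chi_1}$ that is finitely generated up to degree~$1$ with one over $\Z G_{2,\chi_2}$ that is finitely generated up to degree~$0$, and then extend scalars to $\Z(G_1\times G_2)_\chi$. But the tensor product of a degree-$1$ and a degree-$0$ finitely generated partial resolution is finitely generated only up to degree~$1$, not~$2$: in degree~$2$ you are forced to use $P_1\otimes Q_1$ or $P_0\otimes Q_2$, and nothing controls $Q_1$ or $Q_2$. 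You correctly flag the scalar-extension step as the ``main obstacle'', but the problem is earlier --- the input complex itself is not finitely generated in degree~$2$, and there is no mechanism by which extending scalars along $G_{1,\chi_1}\times G_{2,\chi_2}\hookrightarrow (G_1\times G_2)_\chi$ would repair this. Orbit-finiteness of the big monoid over the small one (your proposed fix) addresses preservation of finite generation under induction, not creation of it.

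Gehrke's actual argument avoids this by working not with resolutions over monoid rings but with the Bieri--Renz valuation criterion: one takes free $\Z G_i$-resolutions equipped with $\chi_i$-valuations witnessing $[\chi_i]\in\Sigma^{p_i}(G_i,\Z)$, and the \emph{tensor product over $\Z$} of these (a free $\Z(G_1\times G_2)$-resolution) carries the sum valuation, which witnesses $[\chi]\in\Sigma^{p_1+p_2+1}(G_1\times G_2,\Z)$. The ``$+1$'' is exactly what your construction is missing, and it comes from the fact that the boundary of the tensor complex in degree~$2$ involves $\partial_1\otimes\mathrm{id}$ and $\mathrm{id}\otimes\partial_1$, each controlled by a degree-$1$ valuation inequality on its factor. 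If you want to salvage your approach, reformulate it through the valuation criterion rather than through explicit monoid-ring resolutions.
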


\begin{theorem} \label{BRhomotopic} \cite{B-Renz}, \cite{Renzthesis} Let $G$ be a group of type $F_n$ (resp. $FP_n$) and $N$ be a subgroup of $G$ that contains the commutator subgroup $G'$. Then $N$ is of type $F_n$ (resp. $FP_n$) if and only if
	$$S(G,N) = \{ [\chi] \in S(G) \mid \chi(N) = 0 \} \subseteq \Sigma^n(G) \ \ (\hbox{resp. } \Sigma^n(G,\Z)) .$$
\end{theorem}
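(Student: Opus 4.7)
The plan is to translate the finiteness property of $N$ into a collective condition on the monoids $G_\chi$ attached to characters $\chi$ vanishing on $N$. Since $G' \subseteq N$, the quotient $Q = G/N$ is a finitely generated abelian group, and $S(G,N) \subseteq S(G)$ is naturally identified with $S(Q)$, a sphere of dimension $\mathrm{rank}(Q) - 1$. Characters $\chi \in S(G,N)$ correspond exactly to non-zero homomorphisms $\bar\chi : Q \to \R$, and under this identification $G_\chi$ is the preimage of $Q_{\bar\chi}$.

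For the forward direction I assume $N$ is of type $FP_n$ (resp.\ $F_n$) and fix $\chi$ with $\chi(N) = 0$, so that $N \subseteq G_\chi$. Choosing a set of coset representatives for $N$ in $G_\chi$ makes $\Z G_\chi$ a free right $\Z N$-module, so tensoring a $\Z N$-resolution of $\Z$ that is finitely generated through degree $n$ with $\Z G_\chi$ over $\Z N$ produces an $FP_n$ resolution of $\Z G_\chi \otimes_{\Z N} \Z = \Z[Q_{\bar\chi}]$ over $\Z G_\chi$. Since $Q_{\bar\chi}$ is a finitely generated commutative monoid inside a finitely generated abelian group, the trivial module $\Z$ is $FP_\infty$ over $\Z[Q_{\bar\chi}]$; a splicing argument then yields $\Z$ as $FP_n$ over $\Z G_\chi$, so $[\chi] \in \Sigma^n(G,\Z)$. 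The homotopical version is parallel, lifting a finite $n$-skeleton of $K(N,1)$ to a $G$-equivariant CW complex and checking $(n-1)$-connectivity of the subcomplex sitting above $G_\chi$.

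For the reverse direction I start with a partial free $\Z G$-resolution $F_\bullet \to \Z$ whose terms $F_i$ are finitely generated for $i \le n$, and try to show that as a complex of $\Z N$-modules it is still finitely generated through degree $n$. The hypothesis $S(G,N) \subseteq \Sigma^n(G,\Z)$ means that the restriction of $F_\bullet$ to each $\Z G_\chi$ becomes finitary through degree $n$. Using compactness of $S(G,N)$ I pick finitely many characters $\chi_1, \ldots, \chi_k$ whose open hemispheres cover $S(G,N)$, and then patch together the local finiteness witnesses by a Brown-criterion argument, moving from $\Z G_\chi$-finiteness to $\Z N$-finiteness via an induction on $\mathrm{rank}(Q)$. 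In the homotopical setting the analogous patching is realized geometrically, by running combinatorial Morse theory on the universal cover of a $K(G,1)$ with finite $n$-skeleton and collapsing cells whose image in $\R$ drops below a prescribed level.

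The main obstacle is precisely this reverse direction when $\mathrm{rank}(Q) \ge 2$: no single character suffices to control $N$, and one has to glue the finiteness data coming from several characters whose zero hyperplanes stratify $S(Q)$. Ensuring that the stratification is compatible with the $(n-1)$-connectivity conditions required by the homotopical $\Sigma^n$ (rather than the more forgiving homological $FP_n$ conditions) is what forces the use of Morse-theoretic collapses in Renz's thesis, and is the technical crux of the theorem.
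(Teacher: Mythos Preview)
The paper does not give its own proof of this statement: it is quoted as a known result from \cite{B-Renz} and \cite{Renzthesis}, with only the remark that the homological version is due to Bieri--Renz, the homotopical case $n=2$ to Renz, and that the homotopical case $n\geq 3$ follows from these two via the identity $\Sigma^n(G)=\Sigma^n(G,\Z)\cap\Sigma^2(G)$. So there is no detailed argument in the paper to compare your sketch against.

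That said, two comments on your outline. First, your approach to the homotopical case runs the geometric/Morse argument in every dimension, whereas the paper's remark points to a cheaper route: once you have the homological statement and the homotopical $n=2$ case, the formula $\Sigma^n(G)=\Sigma^n(G,\Z)\cap\Sigma^2(G)$ immediately gives the homotopical statement for all $n$, so no higher-dimensional collapses are needed. Second, in your forward direction you assert that $Q_{\bar\chi}$ is a finitely generated commutative monoid; this is false in general when $\bar\chi$ is irrational (half-space monoids in $\Z^r$ defined by an irrational hyperplane are typically not finitely generated). The conclusion you want, namely that $\Z$ is $FP_\infty$ over $\Z[Q_{\bar\chi}]$, is nonetheless correct --- it is one of the basic technical facts underlying the Bieri--Renz theory --- but it needs a different justification than mere finite generation of the monoid.
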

Bieri and Renz proved the
 homological version of Theorem \ref{BRhomotopic} in \cite{B-Renz}. The homotopical version for $m = 2$ was proved by Renz in \cite{Renzthesis} and the general homotopical case $m \geq 3$ follows from the formula $\Sigma^m(G) = \Sigma^m(G, \mathbb{Z}) \cap \Sigma^2(G)$.

The following theorem can be traced back to several papers : Gehrke results in \cite{G}; the Meier, Meinert and wanWyk description of the $\Sigma$-invariants for right angled Artin groups \cite{Meinert-VanWyk} or the Meinert result on the $\Sigma$-invariants for direct products of virtually free groups \cite{Me2}.

\begin{theorem} \label{new111} \cite{G}, \cite{Meinert-VanWyk}, \cite{Me2} Let $F_2$ be the free group on 2 generators. If $\chi : F_2^s = F_2 \times \ldots \times F_2 \to \mathbb{R}$ is a character whose restriction on precisely $n$ copies of $F_2$ is non-zero, then 	$[\chi] \in \Sigma^{n-1}(F_2^s) \setminus \Sigma^{n}(F_2^s)  $. 
\end{theorem}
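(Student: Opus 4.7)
The plan is to deduce the statement from the direct product formulas for $\Sigma$-invariants combined with the vanishing of $\Sigma$-invariants of $F_2$ in all positive degrees. First I would record that $\Sigma^m(F_2) = \Sigma^m(F_2,\mathbb{Z}) = \Sigma^m(F_2,F) = \emptyset$ for every $m \geq 1$ and every field $F$. Since $\Sigma^m \subseteq \Sigma^1$, and likewise in the homological setting, it suffices to verify this in dimension $1$. The identity $\Sigma^1(F_2) = \emptyset$ is classical and follows directly from the BNS criterion: the Cayley graph of $F_2$ is a tree, so the unique reduced path between any two vertices either lies in $\Gamma_\chi$ or not, and one easily exhibits a pair of vertices in $\Gamma_\chi$ whose connecting tree-path visits a vertex of strictly negative $\chi$-value.

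Next I would iterate the direct product formula over a field (Theorem \ref{teo-conjec.prod.dir.corpo}) using associativity of the join together with its distributivity over unions, obtaining
\begin{equation*}
\Sigma^n(F_2^s, F)^c \; = \; \bigcup_{\substack{(p_1,\ldots,p_s) \in \mathbb{Z}_{\geq 0}^s \\ p_1 + \cdots + p_s = n}} \Sigma^{p_1}(F_2, F)^c * \cdots * \Sigma^{p_s}(F_2, F)^c.
\end{equation*}
By the previous step, $\Sigma^{p_i}(F_2, F)^c$ equals $S(F_2)$ when $p_i \geq 1$ and $\emptyset$ when $p_i = 0$. Combined with the standard description of points in an iterated join (namely $[\chi] \in A_1 * \cdots * A_s \subseteq S(F_2^s)$ iff $[\chi_i] \in A_i$ for every $i$ with $\chi_i \neq 0$), this shows that $[\chi]$ lies in the above union iff there exists a tuple $(p_i)$ with $\sum p_i = n$ and $p_i \geq 1$ for every $i$ with $\chi_i \neq 0$; such a tuple exists iff $n \geq k$, where $k$ is the number of non-zero components of $\chi$. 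The homotopical analogue of this direct product formula for products of free groups is Meinert's result \cite{Me2}; alternatively, since $F_2^s$ is the right-angled Artin group on the complete $s$-partite graph with parts of size $2$, the Meier-Meinert-Van Wyk description of $\Sigma$-invariants of right-angled Artin groups \cite{Meinert-VanWyk} applies. In either case the same combinatorial calculation yields $[\chi] \in \Sigma^m(F_2^s)^c \iff m \geq k$.

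Setting $k = n$ gives the theorem: for $m = n$ we have $n \geq n$, hence $[\chi] \in \Sigma^n(F_2^s)^c$, i.e.\ $[\chi] \notin \Sigma^n(F_2^s)$; for $m = n-1$ we have $n - 1 < n$, hence $[\chi] \notin \Sigma^{n-1}(F_2^s)^c$, i.e.\ $[\chi] \in \Sigma^{n-1}(F_2^s)$. The main obstacle is obtaining the homotopical formula in dimensions $n \geq 3$: the cases $n \leq 2$ can be handled entirely from statements already present in this section (Theorem \ref{teo-conjec.prod.dir.corpo2} for $\Sigma^n(\cdot, \mathbb{Z})$ with $n \leq 2$ together with the equality $\Sigma^1 = \Sigma^1(\cdot, \mathbb{Z})$), but higher dimensions genuinely require the homotopical input from \cite{Me2} or \cite{Meinert-VanWyk}. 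One observation that mitigates this: the non-containment $[\chi] \notin \Sigma^n(F_2^s)$ by itself follows purely from the field-coefficient formula via the inclusions $\Sigma^n(F_2^s) \subseteq \Sigma^n(F_2^s, \mathbb{Z}) \subseteq \Sigma^n(F_2^s, \mathbb{Q})$, so the genuinely homotopical ingredient is needed only for the containment $[\chi] \in \Sigma^{n-1}(F_2^s)$.
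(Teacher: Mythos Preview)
Your proposal is correct. The paper does not supply its own proof of this statement; it records it as a consequence of the cited references \cite{G}, \cite{Meinert-VanWyk}, \cite{Me2}, and your argument is precisely the synthesis those citations point to: iterate the direct product formula, use $\Sigma^m(F_2)=\emptyset$ for $m\geq 1$, and invoke Meinert or Meier--Meinert--Van~Wyk for the homotopical statement in dimensions $\geq 3$. Your diagnosis of where the genuinely homotopical input is needed (only for the containment $[\chi]\in\Sigma^{n-1}$, not for the non-containment) is also accurate and worth keeping.
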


Recently the following result was obtained by Kochloukova and Mendon\c{c}a. It should be viewed as a monoidal version of Theorem \ref{BRhomotopic}. It is surprising it has not be discovered earlier as Theorem \ref{BRhomotopic} is quite well-known and the proof of  Theorem \ref{thmI} in \cite{K-Me} is based on ideas from the proof Theorem \ref{BRhomotopic} but is slightly more tecnical.

\begin{theorem} \label{thmI}  \cite{K-Me} a) Let $[H,H] \subseteq K \subseteq H$ be groups such that $H$ and $K$ are of type $FP_n$. Let $\chi : K \to \R$ be a character such that $\chi([H,H]) = 0$. Then $[\chi] \in \Sigma^n(K, \Z)$ if and only if $[\mu] \in \Sigma^n(H, \Z)$ for every character $\mu : H \to \R$ that extends $\chi$.

b) Let $[H,H] \subseteq K \subseteq H$ be groups such that $H$ and $K$ are finitely presented. Let $\chi : K \to \R$ be a character such that $\chi([H,H]) = 0$. Then $[\chi] \in \Sigma^2(K)$ if and only if $[\mu] \in \Sigma^2(H)$ for every character $\mu : H \to \R$ that extends $\chi$.
	\end{theorem}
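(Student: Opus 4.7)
The strategy is to adapt the proof of the Bieri--Renz theorem (Theorem \ref{BRhomotopic}) to the monoidal setting. The hypothesis $\chi([H,H])=0$ is the correct substitute for the vanishing condition $\chi(N)=0$ used in Bieri--Renz: it makes $\chi$ into an $H$-invariant function on $K$, i.e.\ $\chi(hkh^{-1}) = \chi(k)$ for all $h \in H$, $k \in K$. Because $[H,H] \subseteq K$, the quotient $A = H/K$ is a finitely generated abelian group, so $\Sigma^n(A, \Z) = S(A)$ for every $n$. I treat part a; part b then follows by the analogous adaptation of Renz's homotopical proof in \cite{Renzthesis}, with projective resolutions replaced by $2$-dimensional CW-models.

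For the direction $[\chi] \in \Sigma^n(K, \Z) \Rightarrow [\mu] \in \Sigma^n(H, \Z)$, fix an extension $\mu$ of $\chi$ and a $\Z K_\chi$-projective resolution $P_\bullet \to \Z$ with $P_i$ finitely generated for $i \leq n$. Write $A_K$ for the image of $K$ in $H^{ab}$; a rational splitting of $A_K \otimes \Q \hookrightarrow H^{ab} \otimes \Q \twoheadrightarrow A \otimes \Q$ decomposes $\mu$ as the sum of a ``trivial'' extension of $\chi$ and a character $\nu$ on $A$. I would then combine $P_\bullet$ with a finite-type $\Z A_\nu$-resolution of $\Z$ via a double complex and inflate the total complex up to $\Z H_\mu$; this yields a finite-type partial $\Z H_\mu$-projective resolution of $\Z$, establishing $[\mu] \in \Sigma^n(H, \Z)$.

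For the converse, $K$ is $FP_n$, so there is a finite-type partial free $\Z K$-resolution $F_\bullet \to \Z$. Following the $(\Leftarrow)$ half of Bieri--Renz, partial $\Z H_\mu$-contractions of $F_\bullet$ (viewed after inducing up to $\Z H$) are supplied by the hypothesis $[\mu] \in \Sigma^n(H, \Z)$. The new feature is that the hypothesis is available for \emph{every} extension $\mu$ of $\chi$, and this extra freedom, together with the $H$-invariance of $\chi$, lets one adjust the contractions so that they take values in the subring $\Z K_\chi \subseteq \Z H_\mu$, certifying $[\chi] \in \Sigma^n(K, \Z)$. The main obstacle is this converse step: a resolution of finite type over the larger ring $\Z H_\mu$ does not automatically restrict to finite type over $\Z K_\chi$, and unlike in the original Bieri--Renz setting the character $\chi$ need not vanish on $K$, so $K_\chi$ is a strict submonoid of $K$ and the restriction step genuinely requires using the $\Sigma^n(H, \Z)$-hypothesis for all extensions of $\chi$ simultaneously. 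This is the ``slightly more technical'' ingredient referred to in \cite{K-Me}.
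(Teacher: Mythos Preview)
The paper does not contain a proof of Theorem~\ref{thmI}: it is quoted from \cite{K-Me}, and the only information the paper gives about the argument is the remark that it ``is based on ideas from the proof of Theorem~\ref{BRhomotopic} but is slightly more technical.'' Your proposal is consistent with that description, so at the level of strategy you are on the track the authors of \cite{K-Me} indicate.

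A couple of comments on the sketch itself. For the forward direction you make the argument heavier than necessary. Since $[H,H]\subseteq K$, the subgroup $K$ is normal in $H$ and $A=H/K$ is finitely generated abelian. Because $\chi\neq 0$ one has $H=H_\mu K$ for any extension $\mu$, so there is a short exact sequence of monoids $1\to K_\chi\to H_\mu\to A\to 1$. Now $\Z$ is $FP_n$ over $\Z K_\chi$ by hypothesis and $FP_\infty$ over $\Z A$, and the usual extension argument (as in Bieri's book) gives $FP_n$ over $\Z H_\mu$. No rational splitting or ad hoc ``trivial extension'' is needed.

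For the converse you have correctly located the difficulty, but what you have written is really only a restatement of the goal: ``adjust the contractions so that they take values in $\Z K_\chi$'' is exactly what has to be \emph{proved}, and you give no mechanism for it. The point that makes this work in \cite{K-Me} is the compactness/finiteness of the set of extensions modulo scaling together with openness of the $\Sigma$-invariants, which lets one pass from the hypothesis ``$[\mu]\in\Sigma^n(H,\Z)$ for every extension $\mu$'' to a single uniform valuation-raising chain map on a fixed finite-type $\Z K$-resolution. Your sketch does not yet contain that idea, so as written the converse step is a genuine gap rather than just a missing detail.
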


\section{Preliminaries on subdirect products, limit groups, \iffalse{the Virtual Surjection Conjecture and the Monoidal Virtual Surjection  Conjecture}\fi} \label{prel}

The class of limit groups contains all finite rank free groups and the orientable surface groups. It coincides with the class of the fully residually free groups $G$ i.e. for every finite subset $X$ of $G$ there is free group $F$ and a homomorphism $\varphi : G \to F$ whose restriction on $X$ is injective. Limit groups are of homotopical type $F$ i.e. $FP_{\infty}$, finite presentability and finite cohomological dimension.

A subgroup $G \subseteq G_1 \times \ldots \times G_m$ is a subdirect product if the projection map $p_i : G \to G_i$ is surjective for all $ 1 \leq i \leq m$. Denote by $p_{i_1, \ldots, i_n} : G \to G_{i_1} \times \ldots \times G_{i_n}$ the projection map that sends $(g_1, \ldots, g_m)$ to $(g_{i_1}, \ldots, g_{i_n})$.

\begin{theorem} \cite{Desi1} Let $G \subseteq G_1 \times \ldots \times G_m$  be a subdirect product of non-abelian limit groups $G_1, \ldots, G_m$ such that $G \cap G_i \not= 1$ for every $1 \leq i \leq m$. Then if $G$ is of type $FP_n$ for  some $n \leq m$ then $p_{i_1, \ldots, i_n}(G)$ has finite index in $G_{i_1} \times \ldots \times G_{i_n}$ for every $ 1 \leq i_1 < \ldots < i_n \leq m$.
\end{theorem}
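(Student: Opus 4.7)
The plan is to argue by induction on $n$, combining $\Sigma$-invariant theory, the Direct Product Formula for direct products of groups, and the crucial fact proved in \cite{Desi1} that $\Sigma^1(G_i) = \emptyset$ for every non-abelian limit group $G_i$. The base case $n = 1$ is immediate, since by subdirectness $p_i(G) = G_i$.

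For the inductive step, fix $n \leq m$ and assume the statement for all smaller indices. After relabeling, it suffices to show that $H := p_{1,\ldots,n}(G)$ has finite index in $K := G_1 \times \cdots \times G_n$. Set $L := G \cap \bigl(\{1\}^{n-1} \times G_n \times \cdots \times G_m\bigr)$, which is the kernel of the projection $G \twoheadrightarrow p_{1,\ldots,n-1}(G)$. Since $G$ is of type $FP_{n-1}$, the inductive hypothesis gives that $p_{1,\ldots,n-1}(G)$ has finite index in $G_1 \times \cdots \times G_{n-1}$; in particular it is of type $FP_\infty$ and has finite cohomological dimension. The Lyndon--Hochschild--Serre spectral sequence applied to
\[
1 \longrightarrow L \longrightarrow G \longrightarrow p_{1,\ldots,n-1}(G) \longrightarrow 1
\]
then forces $L$ to be of type $FP_n$. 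Note that $G \cap G_n \subseteq L$ is non-trivial, and more generally $p_j(L) \trianglelefteq G_j$ is non-trivial for each $j \geq n$.

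To derive a contradiction from $[K : H] = \infty$, I would construct a character $\mu = \mu_1 + \cdots + \mu_n$ of $K$ with each restriction $\mu_i : G_i \to \R$ nonzero. Since $\Sigma^1(G_i) = \emptyset$ and $\Sigma^p(G_i, F) \subseteq \Sigma^1(G_i, F)$ for any field $F$ and any $p \geq 1$, we obtain $\Sigma^p(G_i, F)^c = S(G_i)$ for all such $p$, so the Direct Product Formula (Theorem \ref{teo-conjec.prod.dir.corpo}) places $[\mu]$ in $\Sigma^n(K, F)^c$ in every possible decomposition. Pulling $\mu$ back to $G$ via the inclusion $G \hookrightarrow G_1 \times \cdots \times G_m$ (extended by zero on the final $m - n$ factors) and exploiting the $FP_n$ properties of $G$ and of $L$, the infinite index of $H$ in $K$ produces an incompatibility with the homological finiteness extracted in the previous step.

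The main obstacles are twofold: first, verifying carefully via the spectral sequence that $L$ is indeed of type $FP_n$, given that the base $p_{1,\ldots,n-1}(G)$ is only virtually a direct product rather than $FP_\infty$ a priori; and second, producing a separating character $\mu$ with nonzero restriction to every factor, since infinite index of $H$ in $K$ does not automatically furnish a non-trivial character of $K$ vanishing on $H$ (the quotient $K/H$ may have torsion abelianization). Exploiting the positive ranks of the abelianizations $G_i^{\mathrm{ab}}$ of non-abelian limit groups, together with the hypothesis $G \cap G_i \neq 1$, should be enough to carry out this construction and close the argument.
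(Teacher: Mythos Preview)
The paper does not prove this theorem: it is stated in the preliminaries section as a result quoted from \cite{Desi1}, with no proof given. There is therefore nothing in the present paper to compare your attempt against.

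As for the attempt itself, it is a sketch rather than a proof, and you yourself flag the two essential gaps. The first is real: from the short exact sequence $1 \to L \to G \to p_{1,\ldots,n-1}(G) \to 1$ with $G$ of type $FP_n$ and quotient of type $FP$ (even $FP_\infty$ with finite cohomological dimension), the Lyndon--Hochschild--Serre spectral sequence does \emph{not} in general force $L$ to be $FP_n$; one typically gets only $FP_{n-d}$ where $d$ is the cohomological dimension of the quotient, and here $d = n-1$, leaving you with $L$ of type $FP_1$. The second gap is also real: infinite index of $H$ in $K$ gives no character of $K$ vanishing on $H$, and having positive-rank abelianizations of the $G_i$ does not by itself produce one. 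The argument in \cite{Desi1} does use $\Sigma$-theory and the vanishing $\Sigma^1(G_i) = \emptyset$, but it proceeds differently, via a direct analysis of the homology of the fibre products and a careful application of the Bieri--Renz criterion, rather than the inductive scheme you outline. If you want to reconstruct the proof, consult \cite{Desi1} directly.
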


\begin{theorem} \cite{Desi1} \label{limit0} Let $G$ be a non-abelian limit group. Then $\Sigma^1(G) = \emptyset$.
	\end{theorem}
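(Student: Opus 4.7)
The plan is to induct on the height of the ICE (iterated centralizer extension) tower presenting $G$ as a limit group, combining the classical fact that $\Sigma^1$ is empty for non-abelian free groups with Bieri--Strebel-type splitting criteria for amalgams.

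Base case. If $G = F$ is non-abelian free of finite rank, then for any character $\chi : F \to \R$ I pick a free generator $x$ with $\chi(x) > 0$ (replacing $x$ by $x^{-1}$ if necessary) and any other free generator $y$. Choose $\epsilon \in \{+1,-1\}$ so that $\chi(y^{\epsilon}) \leq 0$. The conjugate $z = y^{-\epsilon} x y^{\epsilon}$ satisfies $\chi(z) = \chi(x) > 0$, so $z$ lies in $F_\chi$. Since the Cayley graph of $F$ on the chosen basis is a tree, the unique reduced path from $z$ to $1$ passes through the vertex $y^{-\epsilon}$, where $\chi < 0$. Hence $z$ and $1$ lie in different components of $\Gamma_\chi$, so $[\chi] \notin \Sigma^1(F)$.

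Inductive step. If $G$ has positive ICE height, decompose $G = G_0 *_C A$, where $G_0$ is a non-abelian limit group of smaller ICE height, $C = C_{G_0}(u)$ is the centralizer of some non-trivial $u \in G_0$ (abelian, by commutative transitivity in CSA groups), and $A \supsetneq C$ is a finitely generated free abelian group. By induction, $\Sigma^1(G_0) = \emptyset$. Given a character $\chi : G \to \R$, split into two cases. If $\chi(C) = 0$, then both vertex groups differ from $C$ (since $G_0$ is non-abelian and $A \supsetneq C$), so the Bieri--Strebel splitting theorem for $\Sigma^1$ of amalgams yields $[\chi] \notin \Sigma^1(G)$. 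If $\chi(C) \neq 0$, then $\chi|_{G_0} \neq 0$, so by the inductive hypothesis $[\chi|_{G_0}] \notin \Sigma^1(G_0)$; a Bass--Serre tree argument then transfers the disconnection of $\Gamma_{G_0, \chi|_{G_0}}$ to a disconnection of $\Gamma_{G, \chi}$.

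The main obstacle is the second case, where $\chi$ does not vanish on the edge group and the standard Bieri--Strebel criterion does not apply directly. One has to argue that bypassing a disconnection of $\Gamma_{G_0, \chi|_{G_0}}$ by routing through cosets of $A$ is forbidden: any such detour enters and exits a given $G_0$-coset through edges labelled by elements of $C$, and the combination of finite generation of $C$, the constraint $\chi(C) \neq 0$, and the tree structure of the Bass--Serre tree restricts the detours enough that the original disconnection persists. An alternative route, probably closer to the one in \cite{Desi1}, is to exploit the rich abelian JSJ decomposition of limit groups: for any character $\chi$ one locates a non-trivial splitting of $G$ whose edge group lies in $\ker \chi$, reducing everything to the first case.
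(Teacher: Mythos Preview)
The paper does not prove this statement; it is quoted as a preliminary result from \cite{Desi1} with no argument supplied here, so there is nothing in the paper to compare your approach against. That said, your attempt has genuine gaps.

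First, the base case is miscomputed. With your choice $\chi(y^{\epsilon})\le 0$ and $z=y^{-\epsilon}xy^{\epsilon}$, the reduced path $1\to y^{-\epsilon}\to y^{-\epsilon}x\to z$ visits $y^{-\epsilon}$, which has $\chi(y^{-\epsilon})=-\chi(y^{\epsilon})\ge 0$, so this vertex lies \emph{inside} $\Gamma_\chi$, not outside. You need the opposite sign convention (choose $\epsilon$ with $\chi(y^{\epsilon})>0$), and you must treat the case $\chi(y)=0$ separately, e.g.\ via $z=x^{-1}yx$, whose reduced path passes through $x^{-1}$ with $\chi(x^{-1})<0$.

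Second, and more seriously, the inductive framework is not sound as stated. Limit groups are exactly the finitely generated \emph{subgroups} of groups built by iterated centralizer extensions; a given non-abelian limit group $G$ need not itself decompose as $G_0*_C A$ with $G_0$ a limit group of lower ICE height. Thus your induction on ``the height of the ICE tower presenting $G$'' does not apply to $G$ directly, and $\Sigma^1$ does not behave well under passage to arbitrary finitely generated subgroups, so you cannot simply work in the ambient ICE group either. The correct hierarchy to induct on is the one coming from Sela's analysis (or the abelian JSJ): every non-abelian, freely indecomposable, non-surface limit group has a non-trivial abelian splitting with strictly simpler limit-group vertex groups, and surface groups are handled separately. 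You gesture at this in your final sentence, but as written neither route is carried out; in particular the case $\chi(C)\neq 0$ is left as a sketch, and the Bass--Serre transfer you allude to is exactly the delicate part.
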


 The  following conjecture  was defined by Kuckuck in \cite{Benno}.

\medskip
\noindent
{\bf The Virtual Surjection Conjecture} \cite{Benno} {\it Let $G \subseteq G_1 \times \ldots \times G_m$  be a subdirect product of  groups $G_1, \ldots, G_m$ such that $G \cap G_i \not= 1$ for every $1 \leq i \leq m$ and each $G_i$ is of homotopical type $F_n$ for some $n \leq m$. Suppose that $p_{i_1, \ldots, i_n}(G)$ has finite index in $G_{i_1} \times \ldots \times G_{i_n}$ for every $ 1 \leq i_1 < \ldots < i_n \leq m$. Then $G$ is of type $F_n$. }

\medskip The motivation behind the 
the Virtual Surjection Conjecture is that it holds for $n = 2$ \cite{BHMS} and was established by Bridson, Howie, Miller and Short as a corollary of the 1-2-3 Theorem. Furthermore the Virtual Surjection Conjecture holds for any $n$ when $G$ contains $G_1' \times \ldots \times G_m'$ \cite{Benno}. A homological version of the Virtual Surjection Conjecture was sugested in \cite{KL1} and proved for $n = 2$.

\begin{theorem} \cite{KL1}  \label{homological1-2-3} Let  $G \subseteq G_1 \times \ldots \times G_m$  be a subdirect product of  groups $G_1, \ldots, G_m$ such that $G \cap G_i \not= 1$ for every $1 \leq i \leq m$ and each $G_i$ is of homological type $FP_2$. Suppose that $p_{i_1, i_2}(G)$ has finite index in $G_{i_1} \times G_{i_2}$ for every $ 1 \leq i_1 < i_2 \leq m$. Then $G$ is of type $FP_2$.
	\end{theorem}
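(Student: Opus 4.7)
The plan is to proceed by induction on $m$. For $m=1$ there is nothing to prove; for $m=2$ the pairwise hypothesis says that $G$ has finite index in $G_1\times G_2$, which is $FP_2$ by the direct product formula (Theorem \ref{teo-conjec.prod.dir.corpo2}), and $FP_2$ passes to finite-index subgroups by Theorem \ref{teo-Sigma.de.gr.=.Sigma.de.subgr.} applied in dimension $0$. So assume $m\geq 3$ and set $H=p_{1,\ldots,m-1}(G)$. The inductive hypothesis will apply to $H$ after three quick verifications: $H$ is subdirect in $G_1\times\cdots\times G_{m-1}$ by definition; the pairwise projections satisfy $p_{i_1,i_2}(H)=p_{i_1,i_2}(G)\leq_f G_{i_1}\times G_{i_2}$ for $i_1<i_2\leq m-1$; and any non-trivial element of $G\cap G_i$ with $i\leq m-1$ sits inside $H\cap G_i$, so $H\cap G_i\neq 1$. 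By induction, $H$ is of type $FP_2$.

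Next I would realise $G$ as (a finite-index subgroup of) a fibre product. The inclusion $G\hookrightarrow H\times G_m$ given by $(p_{1,\ldots,m-1},p_m)$ lands inside a genuine fibre product $H\times_Q G_m$ over a common quotient $Q$, constructed by killing, on the $G_m$-side, the image $p_m(K)$ of the kernel $K=G\cap(G_1\times\cdots\times G_{m-1}\times 1)$, and analogously on the $H$-side. The pairwise finite-index hypothesis, in particular $p_{i,m}(G)\leq_f G_i\times G_m$ for each $i<m$, is precisely what forces the image of $G$ to coincide up to finite index with this fibre product, so by Theorem \ref{teo-Sigma.de.gr.=.Sigma.de.subgr.} it suffices to prove $H\times_Q G_m$ is of type $FP_2$.

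The final step is a homological asymmetric $1$-$2$-$3$ theorem: given two short exact sequences $1\to N_1\to H\to Q\to 1$ and $1\to N_2\to G_m\to Q\to 1$ with both $H$ and $G_m$ of type $FP_2$ and both $N_1,N_2$ finitely generated, the fibre product $P=H\times_Q G_m$ is of type $FP_2$. I would extract this from the short exact sequence $1\to N_1\to P\to G_m\to 1$ via a Lyndon--Hochschild--Serre spectral sequence argument, combining the finite generation of $N_1$ with the $FP_2$ property of $G_m$ to build a partial resolution of $\mathbb{Z}$ over $\mathbb{Z} P$ with finitely generated modules in dimensions $0,1,2$. The main obstacle, where the bulk of the technical work lies, is proving that the two kernels $N_1,N_2$ are finitely generated; this amounts to showing that $G\cap G_m$ (and its analogue for the other side) is finitely generated, and it is here that the hypothesis $G\cap G_i\neq 1$ must be genuinely used in combination with the pairwise finite-index projections, most naturally via a commutator-generation argument inside the ambient product in the spirit of Bridson--Miller.
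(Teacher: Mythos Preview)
The paper does not prove Theorem~\ref{homological1-2-3}; it is cited from \cite{KL1} as a background result, so there is no in-paper argument to compare against.

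Your inductive fibre-product framework is the right skeleton and matches the strategy of \cite{KL1}, but the last two paragraphs contain a genuine gap. The assertion that both kernels $N_1$ and $N_2$ can be shown to be finitely generated is wrong under the stated hypotheses. Take $m=3$, each $G_i=F_2$, and $G=\ker(F_2^{3}\to\mathbb{Z})$ the Stallings group, each free generator mapping to $1$. Then $G$ is subdirect, $p_{i,j}(G)=G_i\times G_j$ for every pair, and $G\cap G_i=\ker(F_2\to\mathbb{Z})\neq 1$; yet $N_2=G\cap G_3=\ker(F_2\to\mathbb{Z})$ is free of infinite rank. No commutator-generation argument can repair this, because the conclusion is simply false. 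Consequently the symmetric $1$-$2$-$3$ statement you propose (hypotheses on $N_1,N_2,H,G_m$ alone, with nothing assumed about $Q$) is inapplicable here, and it is in any case not the form established in the literature.

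What virtual surjection to pairs actually controls is the \emph{quotient} $Q$, not the kernels. After passing to finite index so that $G$ genuinely surjects onto pairs, a two-line commutator computation (pick $(1,\ast,a)\in G$ using one pair and $(\ast,1,b)\in G$ using another; their commutator is $(1,1,[a,b])$) gives $G_i'\subseteq G\cap G_i$ for each $i$, so $Q=G_m/(G\cap G_m)$ is a quotient of the finitely generated abelian group $G_m^{\mathrm{ab}}$ and hence of type $FP_\infty$. This is what allows the homological $1$-$2$-$3$ theorem of \cite{KL1}---which carries an $FP_3$ hypothesis on $Q$, as in the Bridson--Howie--Miller--Short original---to be invoked and the induction to close; consult \cite{KL1} for the precise formulation and the remaining verifications.
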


\section{Preliminaries on $\X(G)$} \label{prel-X(G)}

Recall that $${\X}(G) = \langle G,\- G \mid [g, \-g] = 1  \hbox{ for } g \in G \rangle,$$
where $\-G$ is an isomorphic copy of the group $G$ and  $\-g$ is the image of $g \in G$ in $\-G$.
In \cite{Said} was defined the normal subgroup $$L = L(G) = \langle \{ \-g^{-1}g  \mid g \in G \}  \rangle$$  of $\X(G)$. Note that  $$\X(G) = L \rtimes G.$$ In \cite{LimaOliveira} Lima and Oliveira showed that  the abelianization $L/ L'$ is finitely generated  whenever $G$ is finitely generated. In \cite{BK} Bridson and Kochloukova generalized this by showing that when $G$ is finitely generated, $L$ is finitely generated.

Another important normal subgroup of $\X(G)$ is $$D= D(G) = [G, \-G].$$ There are canonical epimorphisms of groups
$$
\X(G) \to \X(G)/ D \simeq G \times \-G \simeq G \times G
$$
and
$$\X(G) \to \X(G) / L \simeq G.
$$
The diagonal map of this two epimorphisms induces a map
$$
\rho : \X(G) \to G \times G \times G
$$
with kernel $$W = W(G) = L(G) \cap D(G).$$ This map after some permutation of the factors $G$ in $G \times G \times G$ can be explicitely given by
$$ \rho(g) = (g,g,1) \hbox{ and } \rho(\-g) = (1,g,g).$$ 
Note that
$$
Im (\rho) = \{ (g_1, g_2, g_3) \mid g_1 g_2^{-1} g_3 \in G' \}
$$
is a subdirect product of $ G \times G \times G$ that maps surjectively on  pairs and contains the commutator subgroup $G' \times G' \times G'$.
The defining relations of $\X(G)$ easily imply that 
$$
[L, D] = 1$$
and this property is crucial to develop structure theory for $\X(G)$. For example it implies that $W(G)$ is an abelian group. Furthermore $W(G)$ can be viewed as $\X(G)$-module via conjugation with $DL$ acting trivially. Thus $W(G)$ is a $\X(G)/ DL$-module and $\X(G)/ DL \simeq G/ G'$ is abelian.

The following result was proved by Kochloukova and Sidki in \cite{KochSidki} using homological techniques. Note that every finitely presented group  is $FP_2$.
 
 \begin{theorem} \cite{KochSidki} \label{commutator}  If $G$ is of homological type $FP_2$ and $G'/ G''$ is finitely generated then $W(G)$ is finitely generated.
 	\end{theorem}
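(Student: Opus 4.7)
The plan is to apply the integral five-term exact sequence in two different contexts combined with the homological $1$-$2$-$3$ theorem: first to $1\to W\to \X(G)\to\X(G)/W\to 1$, then to the extension $1\to W\to L\to L/W\to 1$ with $L=L(G)$. The centrality $[W,L]=1$ in the second extension is what converts coinvariants of $W$ into $W$ itself, and the hypothesis $G'/G''$ finitely generated enters at the final step to bound an $H_2$ coming from a semidirect product $G'\rtimes G$.

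First I would establish $FP_2$ of both $\X(G)$ and $\X(G)/W$. The former is Bridson--Kochloukova \cite{BK}; the latter follows from Theorem~\ref{homological1-2-3} applied to $\X(G)/W\cong\rho(\X(G))=\{(g_1,g_2,g_3)\in G^3\mid g_1g_2^{-1}g_3\in G'\}$, which meets each factor of $G\times G\times G$ in $G'$ and projects onto every pair. The five-term sequence applied to $1\to W\to\X(G)\to\X(G)/W\to 1$, together with finite generation of $H_1$ and $H_2$ of both flanking groups, shows $W/[W,\X(G)]$ is finitely generated. Because $W\subseteq L\cap D$ and $[L,D]=1$, the $\X(G)$-action on $W$ factors through $\X(G)/DL\cong G^{ab}$, so $[W,\X(G)]=\omega W$ where $\omega\subset\Z G^{ab}$ is the augmentation ideal. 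Thus $W/\omega W$ is finitely generated.

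To upgrade this to $W$ itself being finitely generated I would run a second five-term sequence, applied to $1\to W\to L\to L/W\to 1$. Because $W\subseteq D$ and $[L,D]=1$ give $[W,L]=1$, $W$ is central in $L$ and the sequence becomes
\[
H_2(L,\Z)\to H_2(L/W,\Z)\to W\to L^{ab}\to(L/W)^{ab}\to 0,
\]
with $W$ itself (not its coinvariants) in the middle. Since $L^{ab}$ is finitely generated by Lima--Oliveira \cite{LimaOliveira}, it is enough to bound $H_2(L/W,\Z)$. Identifying $L/W$ with the semidirect product $G'\rtimes G$ via the projection $\pi_{12}:\X(G)\to G\times G$ (with image on $L$ equal to $\{(h_1,h_2)\mid h_1h_2\in G'\}$), I would invoke the Lyndon--Hochschild--Serre spectral sequence for $1\to G'\to L/W\to G\to 1$; the three $E^2$-contributions to $H_2$ are subquotients of $H_2(G,\Z)$ (finitely generated since $G$ is $FP_2$), $H_1(G,G'/G'')$ (finitely generated since $G$ is $FP_2$ and $G'/G''$ is a finitely generated $\Z$-module, so the standard chain-level computation with a finite $FP_2$-resolution applies), and $H_0(G,H_2(G'))$. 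The main obstacle is this last term $H_2(G')_G$, which is not automatically finitely generated from the hypotheses: handling it requires additional structural input, most plausibly a Hopf-type analysis of $H_2(G')$ in terms of the relation module of $G$ and the $G^{ab}$-module $G'/G''$, or a direct argument showing that the image of $H_2(L/W,\Z)$ in $W$ actually lands in the already-controlled quotient $W/\omega W$.
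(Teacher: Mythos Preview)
The paper does not contain a proof of this statement: Theorem~\ref{commutator} is quoted from \cite{KochSidki} as a preliminary result and is used later (e.g.\ in the proofs of Theorem~F1 and Corollary~H), but no argument is supplied here. So there is no ``paper's own proof'' to compare against; I can only assess your proposal on its merits.

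Your first step is sound. The five-term sequence for $1\to W\to\X(G)\to\X(G)/W\to 1$, together with $\X(G)/W$ being $FP_2$ (Theorem~\ref{homological1-2-3}) and $W\subseteq\X(G)'$, indeed yields that $W/[W,\X(G)]=W/\omega W$ is finitely generated as an abelian group. (You do not actually need $\X(G)$ itself to be $FP_2$: since $W\subseteq\X(G)'$ the map $H_1(\X(G))\to H_1(\X(G)/W)$ is an isomorphism, so $W/\omega W$ is a quotient of $H_2(\X(G)/W,\Z)$.)

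The second step, however, does not close. Two points:

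\emph{(i)} Your identification $L/W\simeq G'\rtimes G$ is not quite right. One has $L/W\simeq LD/D=\{(g_1,g_2)\in G\times G\mid g_1g_2\in G'\}$, which sits in a short exact sequence $1\to G'\to L/W\to G\to 1$, but the obvious set-theoretic section $g\mapsto(g,g^{-1})$ is a homomorphism only when $G$ is abelian. This does not affect your spectral-sequence argument, since the $E^2$-terms depend only on the extension, not on a splitting.

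\emph{(ii)} The genuine gap is the one you yourself flag: the term $E^2_{0,2}=H_2(G',\Z)_G$. Nothing in the hypotheses controls $H_2(G',\Z)$ --- $G'$ need not even be finitely generated --- and neither of your suggested fixes is carried out. A Hopf-type analysis of $H_2(G')$ would require writing $G'$ as $F/S$ with $F$ free; but $G'$ is typically infinitely generated, so $F$ is too, and finite generation of $(S\cap[F,F])/[F,S]$ is not visible from $G$ being $FP_2$ and $G'/G''$ being finitely generated. The alternative suggestion, that the image of $H_2(L/W)\to W$ lands in something controlled by $W/\omega W$, amounts to showing $W\cap[L,L]$ is finitely generated, which is essentially what you are trying to prove. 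So the argument is genuinely incomplete.

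The proof in \cite{KochSidki} uses more specific structural information about $\X(G)$ than a bare five-term/LHS argument can see. In particular (cf.\ Section~\ref{section-tensor} of this paper) there is a normal subgroup $R\subseteq W$ with $W/R\simeq H_2(G,\Z)$, and the argument in \cite{KochSidki} exploits this and related exact sequences to bound $W$ directly, rather than routing through $H_2(L/W)$. Your approach, as it stands, does not access that structure and therefore cannot avoid the uncontrolled $H_2(G')_G$ term.
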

 
 The question whether $\X(G)$ is $FP_n$ when $G$ is $FP_n$ was resolved by Bridson and Kochloukova in \cite{BK} with afirmative answer for $n = 2$ and negative for $n \geq 3$.
 
 \begin{theorem} \cite{BK}  \label{fp2} If $G$ is finitey presented (resp. $FP_2$) then $\X(G)$ is finitely presented (resp. $FP_2$). But if $G$ is $FP_n$ for some $n \geq 3$  then $\X(G)$ is not necessary $FP_n$ since for $F$ a free non-cyclic group of finite rank $\X(F)$ is not $FP_3$.
 	\end{theorem}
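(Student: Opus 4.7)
The plan splits into the positive assertion (ascending finite presentability, resp.\ $FP_2$, from $G$ to $\X(G)$) and the negative assertion. For the positive direction I work in two stages. In the first stage, the map $\rho$ from Section \ref{prel-X(G)} realises the quotient $K := \X(G)/W(G)$ as a subdirect product of $G \times G \times G$ which surjects onto every pair, with intersection with each factor $G$ equal to the non-trivial subgroup $G'$. Theorem \ref{homological1-2-3} then immediately yields $K \in FP_2$ whenever $G \in FP_2$; for finite presentability the 1-2-3 theorem of Bridson--Howie--Miller--Short \cite{BHMS}, together with Kuckuck's virtual-surjection extension \cite{Benno} that allows surjection on pairs in place of finite-index conditions, produces a finite presentation of $K$.

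The second stage lifts the property from $K$ to $\X(G)$. The key input, already cited in the paper, is that $L(G)$ is finitely generated as a subgroup of $\X(G)$ whenever $G$ is finitely generated. Since $W(G) \subseteq L(G)$ is abelian and the conjugation action of $\X(G)$ on $W(G)$ factors through $\X(G)/DL \cong G/G'$ (using $[L,D] = 1$), the task reduces to showing that $W(G)$ is finitely generated as a $\mathbb{Z}[G/G']$-module. Granted this, a Lyndon--Hochschild--Serre spectral sequence argument for $1 \to W(G) \to \X(G) \to K \to 1$, or equivalently the explicit adjunction of finitely many relators to a finite presentation of $K$, produces the required finite presentation (resp.\ $FP_2$ partial resolution) of $\X(G)$. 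The main obstacle is precisely this module-theoretic finite generation of $W(G)$: Theorem \ref{commutator} yields it only under the extra hypothesis that $G'/G''$ be finitely generated, so the general case demands a more delicate argument exploiting both the finite generation of $L$ and the identity $[L,D] = 1$ to extract a finite $\mathbb{Z}[G/G']$-generating set of $W(G)$ directly from a finite presentation of $G$, most likely by tracking how the defining relators $[g, \overline{g}] = 1$ interact with a finite presentation of $G$.

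For the negative assertion, take $F$ free of rank at least two. The quotient $K = \X(F)/W(F)$ embeds in $F \times F \times F$ as the kernel of the alternating-sum map $F^3 \to F^{\mathrm{ab}}$, $(a,b,c) \mapsto \overline{a} - \overline{b} + \overline{c}$; this is a subdirect product of non-abelian limit groups of Bieri--Stallings type, whose pairwise projections are surjective but whose triple projection has infinite index. By Theorem \ref{new111} applied to $F^3$, any character of $K$ with non-trivial push-forwards to all three factors lies in $\Sigma^2 \setminus \Sigma^3$, which forces $K \notin FP_3$. Transferring this failure to $\X(F)$ itself---the main subtlety of the negative direction---requires the relatively explicit description of $W(F)$ available for free $F$ (using the setup of Section \ref{section-tensor} together with $H_2(F,\mathbb{Z}) = 0$) and a final Lyndon--Hochschild--Serre argument to conclude $\X(F) \notin FP_3$.
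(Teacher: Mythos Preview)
This theorem is cited from \cite{BK} and not proved in the present paper, so there is no in-paper argument to compare against; I evaluate your proposal directly. Stage~1 is correct. Stage~2, however, has a genuine gap, and not where you locate it. You assert that once $W = W(G)$ is finitely generated as a $\mathbb{Z}[G/G']$-module, a Lyndon--Hochschild--Serre or relator-adjunction argument lifts finite presentability (resp.\ $FP_2$) from $K$ to $\X(G)$. That implication is false. The spectral-sequence argument for $1 \to W \to \X(G) \to K \to 1$ needs $H_q(W,-)$ to commute with direct products for $q \le 1$, i.e.\ it needs $W$ to be finitely generated \emph{as a group}; module-finite generation over the quotient is far weaker. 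The wreath product $\mathbb{Z}\wr\mathbb{Z}$ is the standard witness: its base $\bigoplus_{\mathbb{Z}}\mathbb{Z}$ is abelian and cyclic as a $\mathbb{Z}[\mathbb{Z}]$-module, the quotient $\mathbb{Z}$ is finitely presented, yet $\mathbb{Z}\wr\mathbb{Z}$ is not finitely presented. Likewise, ``adjoining relators to a finite presentation of $K$'' produces quotients of $K$, never extensions of $K$ by $W$. Ironically, the module-finite generation of $W$ that you flag as the ``main obstacle'' is in fact automatic once $K$ is finitely presented: $W$ is then normally generated in $\X(G)$ by finitely many elements, and since conjugation on $W$ factors through $G/G'$ this is precisely finite generation as a $\mathbb{Z}[G/G']$-module. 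It is the step you treat as routine that actually fails. The route through $W$ therefore cannot close without an independent proof that $W$ is finitely generated as a group, which Theorem~\ref{commutator} supplies only under the extra hypothesis that $G'/G''$ be finitely generated; in \cite{BK} the positive assertion is obtained by a direct analysis of the presentation of $\X(G)$ (and of $L$) rather than via the extension by $W$.

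Your negative-direction outline has the same weak point: transferring $K\notin FP_3$ back to $\X(F)\notin FP_3$ again requires group-level control of $W(F)$, and for free non-abelian $F$ the hypothesis of Theorem~\ref{commutator} is unavailable since $F'/F''$ has infinite rank, so this step too needs a separate argument rather than an appeal to Section~\ref{section-tensor}.
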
 
\section{The main results for $\Sigma^1(\X(G))$} \label{section-sigma1}

Throughout this section  $G$ is  a {\it finitely generated group},
 $$\chi : \X(G) \to \R$$ is a character and $$\chi_0 : \X(G) / D \to \R$$ is the character induced by $\chi$. 
 Note that $$\X(G) / D \simeq G \times \overline{G} \simeq G \times G.$$ We write $$\chi_0 = (\chi_1, \chi_2) : G \times G \to \R$$ and note that $\chi_1(g) = \chi(g)$ and $\chi_2(g) = \chi(\overline{g}_2)$.

\begin{lemma} \label{lemma1.1}Suppose that $G$ is a finitely generated group  and  $[\chi] \in \Sigma^1(\X(G))$. Then $[\chi_0] \in \Sigma^1(\X(G)/D)$ and  one of the following holds :
	 
	1. $\chi_1 \not= 0, \chi_2 \not= 0$;
	
	2. $ \chi_1 = 0, [\chi_2] \in \Sigma^1(G) $;  
	
	3. $ \chi_2 = 0, [\chi_1] \in \Sigma^1(G)$. 
	
	Furthermore if $\chi_1 = \chi_2 \not= 0$ then $[\chi_1] \in \Sigma^1(G)$.
 \end{lemma}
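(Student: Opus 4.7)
The statement splits naturally into three pieces. The first piece ($[\chi_0]\in\Sigma^1(\X(G)/D)$) and the third piece (the ``Furthermore'' clause) both follow by pulling back along convenient quotient maps of $\X(G)$, while the trichotomy in the middle comes from the direct product formula applied to $\X(G)/D \cong G\times G$.

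\smallskip

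First I would handle the containment $[\chi_0]\in\Sigma^1(\X(G)/D)$. Since $\chi$ factors through $\X(G)\to \X(G)/D$ as $\chi_0$, Lemma \ref{quotient-sigma1} applied to this epimorphism gives $[\chi_0]\in\Sigma^1(\X(G)/D)$ directly from the assumption $[\chi]\in\Sigma^1(\X(G))$.

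\smallskip

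Next, under the identification $\X(G)/D\cong G\times \overline{G}\cong G\times G$, the character $\chi_0$ becomes $(\chi_1,\chi_2)$. I would now apply the direct product formula Theorem \ref{teo-conjec.prod.dir.corpo2} in dimension $n=1$, keeping in mind that $\Sigma^1(G)=\Sigma^1(G,\mathbb{Z})$ and that $\Sigma^0(G,\mathbb{Z})=S(G)$ (so $\Sigma^0(G,\mathbb{Z})^c=\emptyset$). The only contribution to $\Sigma^1(G\times G,\mathbb{Z})^c$ therefore comes from the case where one of the two restrictions is the zero character and the other lies outside $\Sigma^1(G)$. Taking complements yields exactly the trichotomy: either both $\chi_1,\chi_2$ are non-zero (case 1), or $\chi_1=0$ and $[\chi_2]\in \Sigma^1(G)$ (case 2), or $\chi_2=0$ and $[\chi_1]\in\Sigma^1(G)$ (case 3).

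\smallskip

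For the final ``Furthermore'' clause, assume $\chi_1=\chi_2\neq 0$. I would use the diagonal epimorphism $\pi_3:\X(G)\to G$ sending both $g$ and $\overline{g}$ to $g$. A direct check on generators shows $\chi=\chi_1\circ\pi_3$ precisely because $\chi_1=\chi_2$: indeed $\chi_1(\pi_3(g))=\chi_1(g)=\chi(g)$ and $\chi_1(\pi_3(\overline{g}))=\chi_1(g)=\chi_2(g)=\chi(\overline{g})$. A second application of Lemma \ref{quotient-sigma1}, this time to $\pi_3$, then transports $[\chi]\in\Sigma^1(\X(G))$ to $[\chi_1]\in \Sigma^1(G)$.

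\smallskip

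I do not expect any serious obstacle: the argument is a straightforward bookkeeping exercise combining the monotonicity of $\Sigma^1$ under quotients (Lemma \ref{quotient-sigma1}) with the direct product formula (Theorem \ref{teo-conjec.prod.dir.corpo2}). The only point that requires a moment of care is verifying that the map $\pi_3$ realizes $\chi$ as a pullback in the diagonal case $\chi_1=\chi_2$; this is immediate once one writes out the values on the generators $g$ and $\overline{g}$.
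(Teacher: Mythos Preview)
Your proposal is correct and matches the paper's proof essentially step for step: Lemma~\ref{quotient-sigma1} for the quotient $\X(G)\to\X(G)/D$, then Theorem~\ref{teo-conjec.prod.dir.corpo2} for the trichotomy, and Lemma~\ref{quotient-sigma1} again for the diagonal case. The only cosmetic difference is that the paper phrases the last step via the quotient $\X(G)\to\X(G)/L\simeq G$ rather than via $\pi_3$, but since $\ker\pi_3=L$ these are the same map.
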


\begin{proof} The fact that  $[\chi_0] \in \Sigma^1(\X(G)/D)$   follows immediately by Lemma  
	\ref{quotient-sigma1}.  Note that $ \Sigma^1(\X(G)/D) \simeq G \times G$. By Theorem \ref{teo-conjec.prod.dir.corpo2} and the fact that $\Sigma^1 ( - , \Z) = \Sigma^1 ( - )$ we have
	\begin{equation} \label{sigma-prod} 
	\Sigma^1(G \times G) =
	\{ [(\chi_1, \chi_2)] \in S(G \times G) \mid \end{equation}  $$ \chi_1 \not= 0, \chi_2 \not= 0 \hbox{ or } \chi_1 = 0, [\chi_2] \in \Sigma^1(G)  \hbox{ or } \chi_2 = 0, [\chi_1] \in \Sigma^1(G) \}.$$
	
	If $\chi_1 = \chi_2$ then $\chi(L) = 0$ and $\chi$ induces a character $\widehat{\chi} : \X(G) / L \simeq G \to \R$. Note that $\widehat{\chi}$ can be identified with $\chi_1$ and by Lemma  
	\ref{quotient-sigma1} $[\widehat{\chi}] \in \Sigma^1(G)$.
	\end{proof}

 \begin{lemma} \label{lemma1.2} Suppose that  $G$ is a finitely generated group and $[\chi_0] =[ (\chi_1, \chi_2)] \in \Sigma^1(G \times G)$ and $\chi_1 \not= \chi_2$. Then $[\chi] \in \Sigma^1(\X(G))$.
 \end{lemma}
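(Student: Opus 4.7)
The plan is to verify directly that the Cayley graph $\Gamma_\chi$ of $\X(G)$ with respect to $Y := X \cup \bar X$ (where $X$ is a finite generating set of $G$) is connected. The projection $\pi\colon \X(G) \twoheadrightarrow \X(G)/D \cong G\times G$ identifies $Y$ with the canonical generating set of $G\times G$; since $D\subseteq[\X(G),\X(G)]$, we have $\chi(D)=0$, so $\chi = \chi_0\circ\pi$.

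Given $h\in\X(G)_\chi$, the hypothesis $[\chi_0]\in\Sigma^1(G\times G)$ furnishes a path in $\Gamma(G\times G)_{\chi_0}$ from $(1,1)$ to $\pi(h)$; lifting edge-by-edge produces a path in $\Gamma_\chi$ from $1$ to some $\tilde h$ with $\pi(\tilde h)=\pi(h)$, whence $h=\tilde h\,d$ for some $d\in D$. Left-translation by $\tilde h$ shifts $\chi$-values by $\chi(\tilde h)\ge 0$, so the problem reduces to connecting $1$ to each $d\in D$ in $\Gamma_\chi$. Since $\chi(D)=0$, left-translation by $D$-elements preserves $\Gamma_\chi$; combined with the fact that $D$ is generated by the commutators $[g,\bar k]$ ($g,k\in G$), a routine inductive translation (plus the path-reversal trick to handle inverses) reduces the problem further to exhibiting, for each single commutator $c=[g,\bar k]$, a path in $\Gamma_\chi$ from $1$ to $c$.

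For this step the hypothesis $\chi_1\ne\chi_2$ is essential. Pick $g_0\in G$ with $\chi_1(g_0)\ne\chi_2(g_0)$ and set $t:=g_0\bar g_0^{-1}\in L$; after possibly inverting $g_0$ we may assume $\chi(t)>0$. The key structural fact is $[L,D]=1$, so $t$ commutes with $c$. For $N$ sufficiently large, build the path
\begin{equation*}
1 \;\longrightarrow\; d_0 t^N \;\longrightarrow\; d_0 t^N c = d_0\,c\,t^N \;\longrightarrow\; d_0\,c,
\end{equation*}
where the first leg lifts a $\chi_0$-non-negative path in $G\times G$ from $(1,1)$ to $\pi(t^N)=(g_0^N,g_0^{-N})$, possibly landing at a $D$-correction $d_0 t^N$; the second leg spells the commutator word $g\bar k g^{-1}\bar k^{-1}$, whose prefix $\chi$-excursions are absorbed by the shift $N\chi(t)\gg 0$; and the third leg spells $t^{-N}=\bar g_0^N g_0^{-N}$ (using $[g_0,\bar g_0]=1$), descending step by step while staying non-negative for $N$ large. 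Reversing the lift of the first leg and translating by $d_0^{-1}$ (which has $\chi=0$, hence preserves $\Gamma_\chi$) yields a path $1\to d_0^{-1}$ in $\Gamma_\chi$; concatenated with the $d_0^{-1}$-translate of the three-leg path, this produces a path $1\to c$ in $\Gamma_\chi$, as required.

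The main obstacle is the careful selection of $g_0$ so that the prefix $\chi$-excursions in the middle and descent legs are uniformly absorbable, and the precise bookkeeping for the $D$-correction $d_0$ throughout the translation/reversal steps. In the degenerate case when $\chi_1$ and $\chi_2$ are negative scalar multiples (so no single $g_0$ simultaneously makes $\chi_1(g_0)$ and $\chi_2(g_0)$ non-negative with strict inequality), two separate elements $g_1,g_2\in G$ with $\chi_1(g_1)>0$ and $\chi_2(g_2)>0$ replace $g_0$, and an analogous but more elaborate multi-leg construction is required; the commutation $[L,D]=1$ remains the pivotal ingredient. Once the path to each commutator is secured, the full connectedness of $\Gamma_\chi$, and hence $[\chi]\in\Sigma^1(\X(G))$, follows.
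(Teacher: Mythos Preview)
Your approach is essentially the same as the paper's: lift a $\chi_0$-nonnegative path from $\X(G)/D\cong G\times G$, reduce to connecting $1$ to an element of $D$ inside $\Gamma_\chi$, and then exploit an element of $L$ with positive $\chi$-value together with $[L,D]=1$ to conjugate an arbitrary path into $\Gamma_\chi$.

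The paper's execution, however, is considerably cleaner than yours, and the difference is worth noting. Since $\Sigma^1$ is independent of the finite generating set, the paper simply \emph{includes} an element $a\in L$ with $\chi(a)\geq 1$ as a single generator in $Y$. The ascent and descent are then the words $a^m$ and $a^{-m}$, whose prefix $\chi$-values are monotone; there are no intermediate dips, no $D$-correction $d_0$ to track, and no ``degenerate case'' depending on the signs of $\chi_1(g_0),\chi_2(g_0)$. Moreover, the paper treats an arbitrary $t\in D$ directly: pick any path $\widetilde\gamma$ in $\Gamma$ from $1$ to $t$, choose $m$ so that $\widetilde\gamma\subseteq\Gamma_{\chi\geq -m}$, and then $\gamma_0=\delta_m\,(a^m\cdot\widetilde\gamma)\,(a^mt\cdot\delta_{-m})$ is a path in $\Gamma_\chi$ from $1$ to $a^mta^{-m}=t$. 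This bypasses both your reduction to single commutators $[g,\bar k]$ and the recursive handling of $d_0$ (which, as you wrote it, is a bit circular: getting a path $1\to d_0^{-1}$ from the reversed first leg still requires first reaching $t^N$ without a correction).

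So your proof is salvageable and the core idea is right, but the complications you flag as ``the main obstacle'' are self-inflicted. Enlarging the generating set by one element of $L$ eliminates all of them.
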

\begin{proof} 
	
	Note that since $\chi_1 \not= \chi_2$ we have $\chi(L) \not= 0$. From the very beginning we can fix an element $a \in L$ such that $\chi(a) \geq 1$ and include it in a fixed finite generating set $Y$ of $\X(G)$. Let $\Gamma$ be the Cayley graph of $\X(G)$ with respect to $Y$. Let $\widehat{Y}$ be the image of $Y$ in $\X(G) / D$ and let $\widehat{\Gamma}$ be the Cayley graph of $\X(G)/ D$ with respect to the finite generating set $\widehat{Y}$. By definition $\Gamma_{\chi}$ is the subgraph of $\Gamma$ spanned by $\X(G)_{\chi} = \{ h \in \X(G) ~| ~\chi(h) \geq 0 \}$ and $\widehat{\Gamma}_{\chi_0}$ is the subgraph of $\widehat{\Gamma}$ spanned by $(\X(G)/ D)_{\chi_0} = \{ h \in \X(G)/D ~| ~\chi_0(h) \geq 0 \}$.
	
Let $g \in \X(G)_{\chi} $ and write $\widehat{g}$ for the image of $g$ in $\X(G)/ D$. Since $[\chi_0] \in \Sigma^1(\X(G)/ D)$ we deduce that there is a path $\widehat{\gamma}$ in $\widehat{\Gamma}_{\chi_0}$ that starts at $1_{\X(G)/ D}$ and finishes at $\widehat{g}$. Then we can lift the path $\widehat{\gamma}$ to a path $\gamma$ in $\Gamma_{\chi}$ that starts at $1_{\X(G)}$ i.e. under the canonical epimorphism $\pi : F(Y) \to F(\widehat{Y})$ (where $F(Y)$ and $F(\widehat{Y})$ are the free groups with basis $Y$ and $\widehat{Y}$ respectively) the label $l(\gamma)$ of $\gamma$ is sent to the label $l(\widehat{\gamma})$. Then the path $\gamma$ finishes at an element of $\X(G)$ that is mapped under the canonical epimorphism $\X(G) \to \X(G) / D$ to $\widehat{g}$ i.e. the final point is $t g$ for some $t \in D$.
	
	Suppose there is a path $\gamma_0$ in $\Gamma_{\chi}$ that starts at $1_{\X(G)}$ and finishes at $t$. Then the composition path
	$\gamma_0^{-1} \gamma $ is a path in $\Gamma_{\chi}$ that starts at $t$ and finishes at $t g$. Finally since $\chi(t) \in \chi(D) = 0$ we deduce that $t^{-1} .(\gamma_0^{-1} \gamma)$ is a path in 
$\Gamma_{\chi}$ that starts at $1_{\X(G)}$ and finishes at $g$. Thus $[\chi] \in \Sigma^1(\X(G))$ as required.

Finally we construct the path $\gamma_0$. First we start with any path $\widetilde{\gamma}$ in $\Gamma$ that starts at $1_{\X(G)}$ and finishes at $t$. Note that for $m$ sufficiently large we have that $\widetilde{\gamma}$ is inside $\Gamma_{\chi \geq - m}$. Recall that $a \in L \cap Y$ is an element such that $\chi(a) \geq 1 $. Let $\delta_m$ be the path in $\Gamma_{\chi}$ that starts at $1_{\X(G)}$ and has label $a^m = a \ldots a$ and let $\delta_{-m}$ be the path in $\Gamma$ that starts at $1_{\X(G)}$ and has label $a^{-m} = a^{-1} \ldots a^{ -1}$.  Then the path $a^m .\widetilde{\gamma}$ is inside $\Gamma_{\chi}$ and starts at $a^m$ and finishes at $a^m t$. And since $\chi(t) = 0$ the path $a^m t . \delta_{-m}$ is inside $\Gamma_{\chi}$ and starts at $a^m t$ and finishes at $a^m t a^{-m}$. Note that $t \in D$ and $a \in L$. Since $[D, L] = 1$ we deduce that $a^m t a^{-m} = t$, hence
the concatenation $\gamma_0 = \delta_m (a^m . \widetilde{\gamma}) (a^m t. \delta_{-m})$ is a path inside $\Gamma_{\chi}$ that starts at $1_{\X(G)}$ and finishes at $t$.
\end{proof}

\begin{lemma} \label{lemma1.4} Suppose that $H$ is a finitely generated group. Let  $N$ be a finitely generated normal subgroup of $H$ and $\chi : H \to \mathbb{R}$ be a character such that $\chi(N) = 0$. Let $\widetilde{\chi} : H / N \to \mathbb{R}$ be the character induced by $\chi$. Assume that $[\widetilde{\chi}] \in \Sigma^1(H / N)$. Then $[\chi] \in \Sigma^1(H)$.

	In particular  for $H = \X(G)$, $N = L$ if $\chi_1 = \chi_2 \not= 0$ and $[\chi_1] \in \Sigma^1(G)$, then $[\chi] \in \Sigma^1(\X(G))$.
\end{lemma}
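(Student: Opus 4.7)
The plan is to argue via the Cayley graph characterization of $\Sigma^1$, parallel to the strategy used in the proof of Lemma~\ref{lemma1.2}. First, I would choose a finite generating set $Y$ of $H$ with the property that $Y \cap N$ generates $N$; this is possible because $N$ is finitely generated. Let $\Gamma$ be the Cayley graph of $H$ with respect to $Y$, and let $\widetilde{\Gamma}$ be the Cayley graph of $H/N$ with respect to the image $\widetilde{Y}$ of $Y$.

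Fix $g \in H_\chi$. Since $\chi$ vanishes on $N$, its image $\widetilde{g} \in H/N$ satisfies $\widetilde{\chi}(\widetilde{g}) = \chi(g) \geq 0$. The hypothesis $[\widetilde{\chi}] \in \Sigma^1(H/N)$ supplies a path $\widetilde{\gamma}$ in $\widetilde{\Gamma}_{\widetilde{\chi}}$ from $1$ to $\widetilde{g}$. Lifting $\widetilde{\gamma}$ letter-by-letter produces a path $\gamma$ in $\Gamma$ starting at $1$, whose intermediate vertices $h$ satisfy $\chi(h) = \widetilde{\chi}(\widetilde{h}) \geq 0$. Hence $\gamma$ lies inside $\Gamma_\chi$, and its endpoint is some element of the form $gn$ with $n \in N$.

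The crux of the argument is then to connect $gn$ back to $g$ within $\Gamma_\chi$. Writing $n = y_1 y_2 \cdots y_s$ as a word in $Y \cap N$ and its inverses, consider the path starting at $gn$ with label $y_s^{-1} y_{s-1}^{-1} \cdots y_1^{-1}$; its endpoint is $g$. Each intermediate vertex has the form $gm$ with $m \in N$, so its $\chi$-value equals $\chi(g) + \chi(m) = \chi(g) \geq 0$. Consequently this bridging path lies in $\Gamma_\chi$, and concatenating it with $\gamma$ yields a path in $\Gamma_\chi$ from $1$ to $g$, establishing $[\chi] \in \Sigma^1(H)$. I do not expect any serious obstacle; the only thing that needs to be invoked carefully is that multiplication by elements of $N$ preserves the $\chi$-value, which is exactly the hypothesis $\chi(N) = 0$, and that is also what removes the need for the $a^{m}$-pumping trick used in Lemma~\ref{lemma1.2}.

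For the ``in particular'' clause, I would apply the main statement with $H = \X(G)$ and $N = L$. The subgroup $L$ is finitely generated whenever $G$ is, by the result of Bridson and Kochloukova recalled in Section~\ref{prel-X(G)}. The assumption $\chi_1 = \chi_2$ forces $\chi(\overline{g}^{-1}g) = \chi_1(g) - \chi_2(g) = 0$ for every $g \in G$, so $\chi(L) = 0$. Under the isomorphism $\X(G)/L \simeq G$ the induced character $\widetilde{\chi}$ is identified with $\chi_1$, so the assumption $[\chi_1] \in \Sigma^1(G)$ becomes $[\widetilde{\chi}] \in \Sigma^1(H/N)$, and the main part of the lemma delivers $[\chi] \in \Sigma^1(\X(G))$.
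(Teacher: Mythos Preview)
Your proposal is correct and follows essentially the same approach as the paper: choose a generating set of $H$ containing generators of $N$, lift a path in $(H/N)_{\widetilde\chi}$ to $\Gamma_\chi$, and then use the $N$-generators to correct the endpoint within the level set $\chi = \chi(g)$. The only cosmetic difference is that the paper writes the lifted endpoint as $tg$ and builds a path $\gamma_0$ from $1$ to $t$ before translating by $t^{-1}$, whereas you write the endpoint as $gn$ and append the correcting path directly at $g$; these are equivalent, and your version is slightly more streamlined.
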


\begin{proof} 
	
	Let $\Gamma$ be the Cayley graph of $H$ with respect to a fixed finite generating set $Y$ and $\widehat{Y}$ be the image of $Y$ in $H / N$. Let $\widehat{\Gamma}$ be the Cayley graph of $H / N$ with respect to the generating set $\widehat{Y}$. 
	
	Fix $g \in H_{\chi}$ and consider $\widehat{g}$ the image of $g$ in $H/ N$.  Since $[\widetilde{\chi}] \in \Sigma^1(H/N)$ we deduce that there is a path $\widehat{\gamma}$ in $\widehat{\Gamma}_{\widetilde{\chi}}$ that starts at $1_{H/N}$ and finishes at $\widehat{g}$. Then we can lift the path $\widehat{\gamma}$ to a path $\gamma$ in $\Gamma_{\chi}$ that starts at $1_{H}$. Note that the path $\gamma$ finishes at an element of $H$ of the type $t g$ for some $t \in N$.
	
	Suppose there is a path $\gamma_0$ in $\Gamma_{\chi}$ that starts at $1_{H}$ and finishes at $t$. Then 
	$\gamma_0^{-1} \gamma $ is a path in $\Gamma_{\chi}$ with beginning $t$ and end $t g$. Finally since $\chi(t) \in \chi(N)  = 0$ we get $t^{-1} (\gamma_0^{-1} \gamma)$ is a path in 
	$\Gamma_{\chi}$ with beginning $1_{H}$ and end $g$. Thus $[\chi] \in \Sigma^1(H)$ as required.
	
	Finally we construct the path $\gamma_0$.   Consider a finite generating set $Y_1$ of $N$ and we can  choose $Y$ such that $Y_1 \subseteq Y$. Then we can link the elements $1_{H}$ and $t$ with a path $\gamma_0$ in $\Gamma_{\chi = 0}$ whose label is a word on $Y_1^{\pm 1}$, where $\Gamma_{\chi = 0}$ is the subgraph of $\Gamma$ generated by $Ker(\chi)$.

	Finally for the case $H = \X(G)$, $N = L$ observe that by \cite[Prop.~2.3]{BK} if $G$ is finitely generated then $L$ is finitely generated.
\end{proof}
 Lemma \ref{lemma1.1}, Lemma \ref{lemma1.2} and Lemma \ref{lemma1.4} imply the following corollary.
\begin{cor} \label{cor-sigma1}
	Let $G$ be a finitely generated group and $\chi : \X(G) \to \mathbb{R}$ be a character. Consider the homomorphisms $\chi_1, \chi_2 : G \to \R$, where $\chi_1(g) = \chi(g)$ and $\chi_2(g) = \chi(\overline{g})$.
	Then
	$[\chi] \in \Sigma^1(\X(G))$ if and only if one of the following holds :
	
	1. $\chi_1 \not= 0$ , $ \chi_2 \not= 0$ and $ \chi_1 \not= \chi_2$;
	
	2. $\chi_1 = 0 $, $ [\chi_2] \in \Sigma^1(G)$;
	
	3. $\chi_2 = 0 $, $ [\chi_1] \in \Sigma^1(G)$;
	
	4. $\chi_1 = \chi_2 \not= 0$ and $[\chi_1] \in \Sigma^1(G)$.
\end{cor}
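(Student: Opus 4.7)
The plan is to derive the corollary as a direct combination of Lemma \ref{lemma1.1}, Lemma \ref{lemma1.2} and Lemma \ref{lemma1.4}, treating the forward and backward implications separately and carefully partitioning the character sphere $S(\X(G))$ into the four mutually exclusive regimes listed in the statement.

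For the forward direction, I would assume $[\chi] \in \Sigma^1(\X(G))$ and immediately apply Lemma \ref{lemma1.1}. If $\chi_1 = 0$, the lemma gives $[\chi_2] \in \Sigma^1(G)$, which is case~2; the case $\chi_2 = 0$ is symmetric and yields case~3. In the remaining case both $\chi_1$ and $\chi_2$ are nonzero, and here I would split on whether $\chi_1 = \chi_2$ or not. If $\chi_1 \neq \chi_2$ we land in case~1, while if $\chi_1 = \chi_2 \neq 0$ the final clause of Lemma \ref{lemma1.1} yields $[\chi_1] \in \Sigma^1(G)$, which is case~4.

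For the backward direction I would verify each of the four cases separately. In case~1, the fact that $\chi_1 \neq 0$ and $\chi_2 \neq 0$ places $[\chi_0]$ in $\Sigma^1(G \times G) = \Sigma^1(\X(G)/D)$ by the description \eqref{sigma-prod} recalled in the proof of Lemma \ref{lemma1.1}; since additionally $\chi_1 \neq \chi_2$, Lemma \ref{lemma1.2} gives $[\chi] \in \Sigma^1(\X(G))$. In case~2 we have $\chi_1 = 0$ and $[\chi_2] \in \Sigma^1(G)$, so again \eqref{sigma-prod} yields $[\chi_0] \in \Sigma^1(\X(G)/D)$, and since $\chi_1 = 0 \neq \chi_2$ we may apply Lemma \ref{lemma1.2}; case~3 is completely symmetric. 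In case~4, the key observation is that $\chi_1 = \chi_2$ forces $\chi(\bar g^{-1} g) = 0$ for every $g \in G$, hence $\chi(L) = 0$; the induced character on $\X(G)/L \simeq G$ is identified with $\chi_1$, which lies in $\Sigma^1(G)$ by assumption, so Lemma \ref{lemma1.4} (with $H = \X(G)$ and $N = L$, using that $L$ is finitely generated by \cite[Prop.~2.3]{BK}) yields $[\chi] \in \Sigma^1(\X(G))$.

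There is no real obstacle here, as each of the three preparatory lemmas has already done the substantive work: Lemma \ref{lemma1.2} carries out the Cayley-graph path-lifting argument through the quotient $\X(G)/D$ using that $[L,D]=1$, and Lemma \ref{lemma1.4} handles the factorisation through $\X(G)/L$. The only minor point to be careful about is ensuring the four cases are mutually exclusive and jointly cover every character, which is immediate from partitioning on the pair $(\chi_1,\chi_2)$ according to which of $\chi_1$, $\chi_2$, $\chi_1-\chi_2$ vanish.
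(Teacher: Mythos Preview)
Your proposal is correct and follows exactly the same approach as the paper, which simply records that the corollary follows from Lemma~\ref{lemma1.1}, Lemma~\ref{lemma1.2} and Lemma~\ref{lemma1.4}; you have merely spelled out in detail how the three lemmas combine to cover both implications and all four cases.
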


Denote by $$\pi : \X(G) \to \X(G) / W$$ the canonical projection.
\begin{lemma}  \label{sigma1}  Suppose that $G$ is a finitely generated group. Let $$\widehat{\chi} : \X(G)/ W \to \R$$ be a character, $\chi_1, \chi_2 : G \to \mathbb{R}$ be the characters defined by  $\chi_1(g) = \widehat{\chi} \pi (g)$  and $\chi_2(g) = \widehat{\chi} \pi (\bar{g})$.
	
	Then $[\widehat{\chi}] \in
	\Sigma^1(\X(G)/ W)$ if and only one of the following conditions hold:

	1. $\chi_1 \not= 0$ , $ \chi_2 \not= 0$ and $ \chi_1 \not= \chi_2$;
	
	2. $\chi_1 = 0 $, $ [\chi_2] \in \Sigma^1(G)$;
	
	3. $\chi_2 = 0 $, $ [\chi_1] \in \Sigma^1(G)$;
	
	4. $\chi_1 = \chi_2 \not= 0$ and $[\chi_1] \in \Sigma^1(G)$.

\end{lemma}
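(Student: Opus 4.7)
The plan is to reduce Lemma~\ref{sigma1} to Corollary~\ref{cor-sigma1} by identifying the character spheres $S(\X(G))$ and $S(\X(G)/W)$ via the canonical projection $\pi$, and showing that this bijection carries $\Sigma^1(\X(G))$ onto $\Sigma^1(\X(G)/W)$. The key observation is that $W = L \cap D \subseteq D = [G,\overline{G}] \subseteq [\X(G),\X(G)]$, so every character on $\X(G)$ vanishes on $W$ and hence descends uniquely to a character on $\X(G)/W$; conversely, every character $\widehat{\chi}$ on $\X(G)/W$ lifts to $\chi = \widehat{\chi}\circ\pi$. Under this bijection the pair $(\chi_1,\chi_2)$ associated to $\chi$ coincides with the one defined from $\widehat{\chi}$ in the statement, and both groups are finitely generated (the top one by \cite[Prop.~2.3]{BK}).

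For the sufficiency direction, if one of conditions 1--4 holds, Corollary~\ref{cor-sigma1} yields $[\chi]\in \Sigma^1(\X(G))$, and Lemma~\ref{quotient-sigma1} applied to $\pi:\X(G)\to \X(G)/W$ transfers this to $[\widehat{\chi}]\in \Sigma^1(\X(G)/W)$.

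For the necessity direction, assume $[\widehat{\chi}]\in \Sigma^1(\X(G)/W)$. Since $W\subseteq D$, the canonical epimorphism $\X(G)/W \to \X(G)/D \cong G\times G$ is well defined, and the induced character there is $\chi_0=(\chi_1,\chi_2)$. Lemma~\ref{quotient-sigma1} gives $[\chi_0]\in \Sigma^1(G\times G)$, and by the direct product formula (see equation~(\ref{sigma-prod})) one of the following must hold: $\chi_1=0$ and $[\chi_2]\in\Sigma^1(G)$ (condition~2); $\chi_2=0$ and $[\chi_1]\in\Sigma^1(G)$ (condition~3); or $\chi_1\neq 0$ and $\chi_2\neq 0$. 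In the last case, if $\chi_1\neq \chi_2$ we are in condition~1. If instead $\chi_1=\chi_2\neq 0$, then $\chi(\overline{g}^{-1}g)=\chi_1(g)-\chi_2(g)=0$, so $\chi$ kills $L$; since $W\subseteq L$, the character $\widehat{\chi}$ descends further to a character on $\X(G)/L\cong G$ that is identified with $\chi_1$, and a second application of Lemma~\ref{quotient-sigma1} yields $[\chi_1]\in \Sigma^1(G)$, i.e.\ condition~4.

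The only technical point is the identification of character spheres via $W\subseteq \X(G)'$; aside from that, the argument is a routine combination of Lemma~\ref{quotient-sigma1}, the direct product formula, and Corollary~\ref{cor-sigma1}, so no serious obstacle is expected.
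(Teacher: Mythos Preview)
Your proof is correct and follows essentially the same route as the paper's: both arguments identify $S(\X(G))$ with $S(\X(G)/W)$ via $W\subseteq\X(G)'$, obtain the sufficiency direction from Corollary~\ref{cor-sigma1} together with Lemma~\ref{quotient-sigma1}, and prove necessity by pushing $\widehat{\chi}$ down to $\X(G)/D\cong G\times G$ (using the direct product formula) and, in the case $\chi_1=\chi_2$, further to $\X(G)/L\cong G$. One small remark: the finite generation of $\X(G)$ is immediate from that of $G$ (a finite generating set for $G$ together with its copy in $\overline{G}$ suffices); the reference \cite[Prop.~2.3]{BK} concerns the finite generation of $L$, which is not needed here.
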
 
\begin{proof}
Let $$\chi = \widehat{\chi} \pi : \X(G) \to \mathbb{R}.$$ We claim that $[\widehat{\chi}] \in
\Sigma^1(\X(G)/ W)$ if and only if $[{\chi}] \in
\Sigma^1(\X(G))$. Indeed if $[{\chi}] \in
\Sigma^1(\X(G))$ by Lemma  
\ref{quotient-sigma1} $[\widehat{\chi}] \in
\Sigma^1(\X(G)/ W)$.

Suppose now that $[\widehat{\chi}] \in
\Sigma^1(\X(G)/ W)$. Then since $W \subseteq D$ the group $\X(G) / D$ is a quotient of $\X(G)/W$ and by Lemma  
\ref{quotient-sigma1} for the character $$(\chi_1, \chi_2) : \X(G)/ D \to \mathbb{R}$$ we have $[(\chi_1, \chi_2)] \in \Sigma^1(\X(G)/ D)$. By (\ref{sigma-prod}) either $\chi_1 = 0$, $[\chi_2] \in \Sigma^1(G)$ or $\chi_2 = 0$, $[\chi_1] \in \Sigma^1(G)$ or $\chi_1 \not= 0, \chi_2 \not= 0$. 

Suppose that $\chi_1 = \chi_2$. Note that for the epimorphism $\pi_0 : \X(G) / W \to \X(G) / L$ we have that $\widehat{\chi} = \pi_0 \chi_1$, where we have identified $\X(G) / L$ with $G$. Then by Lemma  
\ref{quotient-sigma1} since  $[\widehat{\chi}] \in
\Sigma^1(\X(G)/ W)$ we deduce that $[\chi_1] \in \Sigma^1(G)$. This completes the proof.

\end{proof}

{\bf Proof of Theorem A} It follows immediately from Corollary \ref{cor-sigma1} and Lemma \ref{sigma1}. 

\medskip
{\bf Proof of Corollary B2}
Suppose that $\pi_1(N)$, $\pi_2(N)$ and $\pi_3(N)$ are all finitely generated. Let $\chi : \X(G) \to \R$ be a character such that $\chi(N) = 0$. We aim to show that $[\chi] \in \Sigma^1(\X(G))$. Then by Theorem \ref{BRhomotopic} we will obtain that $N$ is finitely generated as required.

1. Suppose that $\chi_1 = 0$. Then $\chi_2 \not= 0$. Since $\chi(N) = 0$ we have $\chi_2(\pi_2(N)) = 0$. By Theorem \ref{BRhomotopic} the fact that $\pi_2(N)$ is finitely generated implies that $[\chi_2] \in \Sigma^1(G)$.

2. Suppose that $\chi_2 = 0$. Then $\chi_1 \not= 0$. Since $\chi(N) = 0$ we have $\chi_1(\pi_1(N)) = 0$. By Theorem \ref{BRhomotopic} the fact that $\pi_1(N)$ is finitely generated implies that $[\chi_1] \in \Sigma^1(G)$.

3. Suppose that $\chi_1 = \chi_2 \not= 0$. Since $\chi(N) = 0$ we have $\chi_1(\pi_3(N)) = 0$. By Theorem \ref{BRhomotopic} the fact that $\pi_3
(N)$ is finitely generated implies that $[\chi_1] \in \Sigma^1(G)$.

4. The final case is $\chi_1 \not= 0$, $\chi_2 \not= 0$ and $\chi_1 \not= \chi_2$.

 Then by Theorem A in all four cases $[\chi] \in \Sigma^1(\X(G))$ as required.

Finally apply the above for $N = \X(G)'$ to deduce that $\X(G)'$ is finitely generated if and only if $G'$ is finitely generated. 
This completes the proof of Corollary B2.

\medskip
{\bf Proof of Corollary C} Note that by Theorem \ref{limit0} for a limit group $G$ we have $\Sigma^1(G) = \emptyset$. Then by Theorem A 
$$
\Sigma^1(\X(G)) = \{ [\chi] \in S(\X(G)) \mid \chi_1 \not=0, \chi_2 \not= 0, \chi_1 \not= \chi_2 \}.$$

\section{Some results on $\Sigma^2(\X(G),\Z)$ and $\Sigma^2(\X(G))$}

In this section we prove results that do not require Theorem \ref{thmI}. Note that if $G$
 is $FP_2$ then by \cite{BK} $\X(G)$ is $FP_2$ too. The last condition is necessary for $\Sigma^2(\X(G), \Z) \not= \emptyset$ but as we will see from the results in this section it is not sufficient i.e. there are groups $G$ of type $FP_2$ such that $\Sigma^2(\X(G), \Z) = \emptyset$.

\begin{lemma} \label{abel123} Let $H$ be a group of type $FP_2$, $N$ a normal subgroup of $H$, $[\chi] \in \Sigma^2(H, \Z)$ such that $\chi(N) = 0$ and $\widetilde{\chi} : H/ N \to \R$ be the character induced by $\chi$. Suppose further that $N/[N,N]$ is finitely generated as a left $\Z (H/ N)_{\widetilde{\chi}}$-module, where $H/ N$ acts on $N/[N,N]$ via conjugation. Then
	$[\widetilde{\chi}] \in \Sigma^2(H/N, \Z)$.
	\end{lemma}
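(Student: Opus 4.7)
The plan is to start from an $FP_2$-resolution of $\Z$ over $\Z H_\chi$ (which exists by the hypothesis $[\chi] \in \Sigma^2(H,\Z)$), apply the coinvariants functor $\Z \otimes_{\Z N} -$, and then use the finite generation hypothesis on $N/[N,N]$ to patch the resulting complex into a genuine $FP_2$-resolution of $\Z$ over $\Z (H/N)_{\widetilde{\chi}}$.

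The key preliminary step is a coset decomposition. Since $\chi(N) = 0$ we have $N \subseteq H_\chi$, and because $N$ is a group normal in the ambient group $H$, the right cosets $Nh$ with $h \in H_\chi$ are well-defined subsets of $H_\chi$ (even though $h^{-1}$ need not belong to $H_\chi$) and partition $H_\chi$ into a disjoint union whose quotient set is in natural bijection with $(H/N)_{\widetilde{\chi}}$. Consequently $\Z H_\chi$ is a free left $\Z N$-module; normality of $N$ also ensures that $I_N \cdot \Z H_\chi$ is stable under left multiplication by $\Z H_\chi$ (via the identity $h(n-1) = (hnh^{-1}-1)h$, with $hnh^{-1} \in N$), and one obtains an isomorphism $\Z \otimes_{\Z N} \Z H_\chi \cong \Z(H/N)_{\widetilde{\chi}}$ of left $\Z(H/N)_{\widetilde{\chi}}$-modules, where $I_N = \ker(\Z N \to \Z)$.

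Next, fix an exact sequence $P_2 \to P_1 \to P_0 \to \Z \to 0$ of finitely generated free $\Z H_\chi$-modules and apply $\Z \otimes_{\Z N} -$ to obtain a complex
$$A_2 \to A_1 \to A_0 \to \Z \to 0$$
of finitely generated free $\Z(H/N)_{\widetilde{\chi}}$-modules. Extending $P_*$ to a full free $\Z H_\chi$-resolution of $\Z$ yields, by the freeness of $\Z H_\chi$ over $\Z N$, a projective $\Z N$-resolution of $\Z$, so the homology of $A_*$ in degrees $\leq 1$ computes $H_*(N,\Z)$; in particular the complex is exact at $A_0$ and its homology at $A_1$ is $N^{ab}$. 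Using the hypothesis I would pick a finitely generated free $\Z(H/N)_{\widetilde{\chi}}$-module $A_2'$ surjecting onto $N^{ab}$ and lift this surjection to a map $A_2' \to \ker(A_1 \to A_0)$, so that
$$A_2 \oplus A_2' \to A_1 \to A_0 \to \Z \to 0$$
is exact. This is the desired $FP_2$-resolution of $\Z$ over $\Z(H/N)_{\widetilde{\chi}}$ and therefore gives $[\widetilde{\chi}] \in \Sigma^2(H/N,\Z)$. The principal subtlety is the monoid bookkeeping in the preliminary step, namely verifying that the coset decomposition and change-of-rings arguments familiar from group homology go through for the submonoid $H_\chi$; once this is in place, the remainder is a routine low-degree LHS-style computation.
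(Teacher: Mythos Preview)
Your proof is correct and follows essentially the same route as the paper: tensor a partial free $\Z H_\chi$-resolution of $\Z$ with $\Z$ over $\Z N$, identify the degree-$1$ homology of the resulting complex with $N^{ab}$, and use the finite-generation hypothesis to obtain an $FP_2$-type resolution over $\Z(H/N)_{\widetilde{\chi}}$. The only differences are cosmetic: you spell out the monoid coset decomposition that the paper leaves implicit, and where the paper concludes by noting that $\mathrm{Ker}(d_1)$ is finitely generated (as an extension of $\mathrm{Im}(d_2)$ by $N^{ab}$), you instead add a free summand $A_2'$ to make the truncated complex exact---these are equivalent formulations of the same step.
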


\begin{proof} Since $[\chi] \in \Sigma^2(H, \Z)$ there is a free resolution 
	$$
	{\mathcal P} : \ldots \to P_2 \to P_1 \to P_0 = \Z H_{\chi} \to \Z \to 0
	$$
	of the trivial left $\Z H_{\chi}$-module $\Z$, where $P_1$ and $P_2$ are   finitely generated as $\Z H_{\chi}$-modules. Consider the complex of free $\Z (H/ N)_{\widetilde{\chi}}$-modules
	$$
	{\mathcal R} = {\mathcal P} \otimes_{\Z N} \Z : \ldots \to R_2 \mapnew{d_2} R_1 \mapnew{d_1} R_0 = \Z (H/ N)_{\widetilde{\chi}} \to \Z \to 0
	$$
	Note that ${\mathcal R}$ is not in general exact and
	$$H_0({\mathcal R}) = 0 \hbox{ and } H_1({\mathcal R}) = H_1(N, \Z) = N/ [N,N],
	$$
	where the last follows from the fact that $\mathcal P$ can be viewed as a free resolution of $\Z N$-modules.
	Since $R_2$ is finitely generated as $\Z (H/ N)_{\widetilde{\chi}}$-module we conclude that $Im(d_2)$ is finitely generated as $\Z (H/ N)_{\widetilde{\chi}}$-module.
	This together with the fact that $$N/ [N,N] =  H_1({\mathcal R}) = Ker (d_1)/ Im (d_2)$$ is finitely generated as $\Z (H/ N)_{\widetilde{\chi}}$-module implies that $Ker(d_1)$ is finitely generated as $\Z (H/ N)_{\widetilde{\chi}}$-module. Hence $\Z$ is $FP_2$ as $\Z (H/ N)_{\widetilde{\chi}}$-module i.e. $[\widetilde{\chi}] \in \Sigma^2(H/ N, \Z)$.
	\end{proof}
	
	\begin{cor} \label{abel} Let $N$ be a normal subgroup of $\X(G)$, $[\chi] \in \Sigma^2(\X(G), \Z)$ such that $\chi(N) = 0$ and $\widetilde{\chi} : \X(G)/ N \to \R$ be the character induced by $\chi$. Suppose further that $N/[N,N]$ is finitely generated as a left $\Z (\X(G)/ N)_{\widetilde{\chi}}$-module, where $\X(G)/ N$ acts on $N/[N,N]$ via conjugation. Then
	$[\widetilde{\chi}] \in \Sigma^2(\X(G)/N, \Z)$.
	\end{cor}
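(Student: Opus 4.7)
The plan is to recognize Corollary \ref{abel} as the special case $H = \X(G)$ of Lemma \ref{abel123}, and to verify that the hypotheses of that lemma transfer verbatim. The only input that is not literally in the statement of the corollary is that $H = \X(G)$ is of type $FP_2$, which is explicitly required by Lemma \ref{abel123}. I would obtain this for free from the assumption $[\chi] \in \Sigma^2(\X(G),\Z)$: membership of a character in the second homological $\Sigma$-invariant presupposes that the trivial module $\Z$ admits a projective resolution over $\Z\X(G)_\chi$ that is finitely generated through dimension two, and induction along the ring map $\Z\X(G)_\chi \to \Z\X(G)$ exhibits $\Z$ as $FP_2$ over $\Z\X(G)$, i.e.\ $\X(G)$ is $FP_2$. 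Alternatively one can invoke Theorem \ref{fp2}, together with the elementary observation that $G$ is $FP_2$ as soon as $\X(G)$ is, and deduce $FP_2$-ness of $\X(G)$ from that.

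Once $FP_2$-ness of $\X(G)$ is secured, the remaining hypotheses of Lemma \ref{abel123} match the hypotheses of the corollary line by line: $N$ is normal in $\X(G)$, $\chi$ vanishes on $N$ with induced character $\widetilde{\chi}$ on $\X(G)/N$, and $N/[N,N]$ is finitely generated as a left $\Z(\X(G)/N)_{\widetilde{\chi}}$-module for the conjugation action. I would then invoke Lemma \ref{abel123} directly to conclude $[\widetilde{\chi}] \in \Sigma^2(\X(G)/N,\Z)$.

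Since the corollary is essentially a relabelling, there is no genuine obstacle to overcome; the content lives entirely in Lemma \ref{abel123}, whose argument proceeds by tensoring a partial free resolution of $\Z$ over $\Z\X(G)_\chi$ with $\Z$ over $\Z N$, identifying $H_1$ of the resulting complex with $N/[N,N]$, and then exploiting finite generation of that $H_1$ over $\Z(\X(G)/N)_{\widetilde{\chi}}$ to pass from $FP_2$-ness of $\Z$ over $\Z\X(G)_\chi$ to $FP_2$-ness of $\Z$ over $\Z(\X(G)/N)_{\widetilde{\chi}}$. The only bookkeeping to double-check is the identification of the conjugation action of $\X(G)/N$ on $N/[N,N]$ coming from the outer structure with the module structure induced on $H_1$ by the tensor product, which is automatic from the definitions.
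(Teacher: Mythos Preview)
Your proposal is correct and matches the paper exactly: the corollary is stated without a separate proof because it is the special case $H=\X(G)$ of Lemma~\ref{abel123}, and the only extra hypothesis to check is that $\X(G)$ is of type $FP_2$, which the paper (like your main argument) extracts from the standing fact that $\Sigma^n(-,\Z)\neq\emptyset$ forces type $FP_n$. One small caveat: your ``alternative'' route via Theorem~\ref{fp2} is circular as written (it assumes $\X(G)$ is $FP_2$ to get $G$ is $FP_2$ and then reapplies Theorem~\ref{fp2}), so you should drop it and rely solely on the first argument.
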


\begin{prop} \label{Sigma2} Let $[\chi] \in \Sigma^2(\X(G), \Z)$ and $\chi_0 = (\chi_1, \chi_2): \X(G) / D \simeq G \times G \to \R$  and $\widehat{\chi} : \X(G) / W \to \R$ be the characters induced by $\chi$. Then the following conditions hold:
	
	1. if $\chi_1 \not=  \chi_2$ then $[\chi_0] \in \Sigma^2(\X(G)/ D, \Z)$;
	
	2. if $\chi_1 \not=  \chi_2$ then $[\widehat{\chi}] \in \Sigma^2(\X(G)/ W, \Z)$;
	
	3. if $\chi_1 = \chi_2$ then $\chi(L) = 0$ and for the character $\widetilde{\chi} : \X(G)/ L \to \R$ induced by $\chi$ we have $[\widetilde{\chi}] \in \Sigma^2( \X(G)/ L,\Z)$. Identifying $\X(G)/L$ with $G$ and $\widetilde{\chi}$ with $\chi_1$ we get $[\chi_1] \in \Sigma^2(G, \Z)$.
	
	\end{prop}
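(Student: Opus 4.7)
The natural strategy is to deduce all three parts from Corollary~\ref{abel} applied to three distinct normal subgroups $N\in\{D,W,L\}$. The vanishing condition $\chi(N)=0$ is free in each case: since $\R$ is abelian, $\chi$ kills every commutator, so $\chi(D)=0$ and $\chi(W)=0$ automatically; and $L$ is normally generated by the elements $\bar g^{-1}g$ on which $\chi$ takes the value $\chi_2(g)-\chi_1(g)$, so $\chi(L)=0$ is equivalent to $\chi_1=\chi_2$, which is exactly the dichotomy separating parts 1--2 from part~3.

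For parts 1 and 2, the content of Corollary~\ref{abel} is that $N/[N,N]$ must be finitely generated as a left $\Z(\X(G)/N)_{\widetilde\chi}$-module, where $N\in\{D,W\}$. First I would establish the weaker statement that $N/[N,N]$ is finitely generated as a $\Z(\X(G)/N)$-module. This follows from the Stallings--Hopf five-term exact sequence applied to $1\to N\to\X(G)\to \X(G)/N\to 1$: the group $\X(G)$ is of type $FP_2$ (since $G$ is, by Theorem~\ref{fp2}); the quotient $\X(G)/D\simeq G\times G$ is $FP_2$ by K\"unneth; and $\X(G)/W$ is $FP_2$ by the homological $1$-$2$-$3$ theorem (Theorem~\ref{homological1-2-3}). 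Hence $N/[\X(G),N]$ is a finitely generated abelian group, $N$ is finitely generated as a normal subgroup, and therefore $N/[N,N]$ is finitely generated as a $\Z(\X(G)/N)$-module.

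Upgrading from $\Z(\X(G)/N)$-module generation to $\Z(\X(G)/N)_{\widetilde\chi}$-module generation is where the hypothesis $\chi_1\neq\chi_2$ enters, and this is the main technical step of the proof. I would pick $a\in L$ with $\chi(a)>0$; since $[L,D]=1$ and $N\subseteq D$, the element $a$ centralises $N$, so its image $\widetilde a$ in $\X(G)/N$ lies in the positive submonoid $(\X(G)/N)_{\widetilde\chi}$ and acts trivially on $N/[N,N]$. For any $r\in\Z(\X(G)/N)$ one can pick $m$ large enough that $\widetilde a^m r\in\Z(\X(G)/N)_{\widetilde\chi}$, and triviality of the $\widetilde a$-action forces $\widetilde a^m r$ and $r$ to induce the same endomorphism of $N/[N,N]$; thus any $\Z(\X(G)/N)$-module generating set of $N/[N,N]$ is also a $\Z(\X(G)/N)_{\widetilde\chi}$-module generating set. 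Corollary~\ref{abel} now delivers parts 1 and 2.

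Part 3 is easier: when $\chi_1=\chi_2$ one has $\chi(L)=0$, so $\chi$ descends to $\widetilde\chi:\X(G)/L\simeq G\to\R$, which is tautologically identified with $\chi_1$. By the Bridson--Kochloukova result \cite{BK} recalled in Section~\ref{prel-X(G)}, the finite generation of $G$ implies that $L$ is itself finitely generated; hence $L/[L,L]$ is finitely generated as an abelian group, and trivially finitely generated as a $\Z(\X(G)/L)_{\widetilde\chi}$-module. A final application of Corollary~\ref{abel} yields $[\chi_1]\in\Sigma^2(G,\Z)$.
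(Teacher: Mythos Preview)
Your argument is correct and mirrors the paper's: all three parts reduce to Corollary~\ref{abel} with $N\in\{D,W,L\}$, and for parts 1--2 the passage from $\Z(\X(G)/N)$-generation to $\Z(\X(G)/N)_{\widetilde\chi}$-generation uses exactly the trick you describe (an element $a\in L$ with $\chi(a)>0$ acts trivially on $N/[N,N]$ because $[L,D]=1$, so the $\X(G)$-action factors through $\X(G)_\chi$). One small point on the middle step for $N=D$: the five-term sequence only gives that $D/[\X(G),D]$ is a finitely generated abelian group, and this does not in general imply that $D$ is finitely generated as a normal subgroup (it does for $N=W$, since $W$ is abelian). The cleaner route, and the one the paper takes, is to observe directly that $D/[D,D]$ is a quotient of the relation module of the $FP_2$ group $\X(G)/D\simeq G\times G$ and is therefore finitely generated as a $\Z(G\times G)$-module, bypassing normal generation altogether.
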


\begin{proof} Note that the condition $\chi_1 \not= \chi_2$ is equivalent to $\chi(L) \not= 0$.  Since $\Sigma^2(\X(G), \Z) \not= \emptyset$ we deduce that $\X(G)$ is $FP_2$, hence its retract $G$ is $FP_2$.
	
	1. By Corollary \ref{abel} applied for $N = D$
	it remains to prove that $D/ [D,D]$ is finitely generated as $\Z (\X(G)/ D)_{\chi_0}$-module. The fact that $G$ is $FP_2$ implies that 
	$\X(G) / D \simeq G \times G$ is $FP_2$ and so any relation module of $G \times G$ is finitely generated as $G \times G$-module. Hence any quotient of a relation module of $G \times G$
	is finitely generated as $G \times G$-module, in particular $D/[D,D]$ is finitely generated as $\X(G)$-module ( via conjugation). Since $\chi(L) \not= 0$ we have that $\X(G) = \X(G)_{\chi}  L$. This combined with the fact that $L$ and $D$ act trivially (via conjugation) on $D/ [D,D]$ implies that $D/[D,D]$ is finitely generated as $\Z ( \X(G)_{\chi}/ D)$-module. Finally note that $ \X(G)_{\chi}/ D = (\X(G)/D)_{{\chi}_0}$.
	
	2.  Note that $\X(G)/ W$ is a subdirect product of $G \times G \times G$ that maps surjectively on pairs. Thus since $G$ is $FP_2$ we can apply Theorem \ref{homological1-2-3} to deduce that $\X(G)/ W$ is $FP_2$, hence $W/ [W,W] = W$ is finitely generated as $\X(G)$-module via conjugation. Since $\chi(L) \not= 0$ we can use $\X(G) = \X(G)_{\chi} L$ and the fact that $L$ acts trivially on $W$ via conjugation to deduce 	that $W$ is finitely generated as $\Z ( \X(G)_{\chi}/ W)$-module. Finally note that $ \X(G)_{\chi}/ W = (\X(G)/W)_{\widehat{\chi}}$.
	
	3. Suppose now that $\chi(L) = 0$. This is equivalent to $\chi_1 = \chi_2$. 	
	Consider the decomposition $\X(G) = L \rtimes G$. Then the character $\chi$ induces a character $\widetilde{\chi} : \X(G) / L \to \R$ that after identifying $\X(G)/ L$ with $G$  is the character $\chi_1$. By Corollary \ref{abel}  to show that  $[\chi_1] \in \Sigma^2(G, \Z)$ it suffices to show that $L/ [L,L]$ is finitely generated as $\Z (\X(G)/ L)_{\widetilde{\chi}}$-module. As observed before Bridson and Kochloukova showed in \cite{BK} that $L$ is finitely generated whenever $G$ is finitely generated. The fact that $L/ [L,L]$ is finitely generated for finitely generated group $G$ was proved earlier by Lima and Oliveira  in \cite{LimaOliveira}. 
\end{proof}

\begin{lemma}  Let $H$ be a finitely presented group, $N$ a normal subgroup of $H$, $[\chi] \in \Sigma^2(H)$ such that $\chi(N) = 0$ and $\widetilde{\chi} : H/ N \to \R$ be the character induced by $\chi$. Suppose further that $N $ is finitely generated as a left $H_{\widetilde{\chi}}$-group, where $H$ acts on $N$ via conjugation. Then
	$[\widetilde{\chi}] \in \Sigma^2(H/N)$.
	\end{lemma}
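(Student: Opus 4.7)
The plan is to mimic the proof of the homological Lemma~\ref{abel123}, working with Cayley 2-complexes in place of free chain complexes. Since $[\chi] \in \Sigma^2(H)$, I fix a finite presentation $H = \langle X \mid R \rangle$ whose Cayley 2-complex $\mathcal{C}$ has $\mathcal{C}_{\chi}$ 1-connected. Let $n_1, \ldots, n_s \in N$ be a finite subset generating $N$ as an $H_{\chi}$-group under conjugation (I read $H_{\widetilde{\chi}}$ in the statement as the preimage $H_{\chi}$ of $(H/N)_{\widetilde{\chi}}$ under $\pi\colon H \to H/N$), and choose words $v_i$ on $X^{\pm 1}$ representing $n_i$ in $H$. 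Then $\{n_1, \ldots, n_s\}$ normally generates $N$ in $H$, so $\langle X \mid R \cup \{v_1, \ldots, v_s\}\rangle$ is a finite presentation of $H/N$, whose Cayley 2-complex I denote by $\mathcal{D}$. The goal is to show $\mathcal{D}_{\widetilde{\chi}}$ is 1-connected, which by the criterion recalled in the preliminaries yields $[\widetilde{\chi}] \in \Sigma^2(H/N)$.

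Connectedness of $\mathcal{D}_{\widetilde{\chi}}$ is immediate from $[\chi] \in \Sigma^1(H)$ and Lemma~\ref{quotient-sigma1}. For 1-connectedness, let $\mathcal{D}^- \subseteq \mathcal{D}$ denote the subcomplex obtained by deleting the new $v_i$-cells. Because $\chi = \widetilde{\chi} \circ \pi$, one has $\mathcal{D}^-_{\widetilde{\chi}} = \mathcal{C}_{\chi}/N$; and since $\chi(N) = 0$, the subgroup $N$ acts freely and cellularly on $\mathcal{C}_{\chi}$. The hypothesis that $\mathcal{C}_{\chi}$ is 1-connected then makes $\mathcal{C}_{\chi} \to \mathcal{D}^-_{\widetilde{\chi}}$ the universal cover, so $\pi_1(\mathcal{D}^-_{\widetilde{\chi}}) \cong N$.

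The complex $\mathcal{D}_{\widetilde{\chi}}$ is built from $\mathcal{D}^-_{\widetilde{\chi}}$ by attaching those $v_i$-cells based at a vertex $\bar g \in H/N$ that admits a lift $\tilde g \in H$ with $\chi(\tilde g \cdot p) \geq 0$ for every prefix $p$ of $v_i$. A standard lifting argument in the universal cover identifies the element of $N \cong \pi_1(\mathcal{D}^-_{\widetilde{\chi}}, 1)$ that such a cell kills as the conjugate $\tilde g\, n_i\, \tilde g^{-1}$. Writing $c_i := \max\{-\chi(p) : p \text{ a prefix of } v_i\}$, the $v_i$-cell at $\bar g$ lies in $\mathcal{D}_{\widetilde{\chi}}$ exactly when some lift $\tilde g$ satisfies $\chi(\tilde g) \geq c_i$. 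Hence $\mathcal{D}_{\widetilde{\chi}}$ is 1-connected if and only if the set
$$T := \{\tilde g\, n_i\, \tilde g^{-1} : 1 \leq i \leq s,\ \tilde g \in H,\ \chi(\tilde g) \geq c_i\}$$
generates $N$ as a subgroup (its normal closure in $N$ coincides with $\langle T\rangle$, since $\chi$ is $N$-invariant).

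The main obstacle is that the hypothesis only provides a decomposition $u = \prod_j \sigma_j\, n_{i_j}^{\pm 1}\, \sigma_j^{-1}$ for $u \in N$ with $\sigma_j \in H_{\chi}$, without any lower bound on $\chi(\sigma_j)$. This will be overcome by a shift. Since $\widetilde{\chi} \neq 0$, there exists $\tau_0 \in H$ with $\chi(\tau_0) > 0$; for any $k \in \mathbb{N}$ and $u \in N$ the conjugate $\tau_0^{-k} u\, \tau_0^k$ again lies in $N$, and hence admits a decomposition $\prod_j \sigma'_j\, n_{i_j}^{\pm 1}\, (\sigma'_j)^{-1}$ with $\sigma'_j \in H_{\chi}$, yielding
$$u = \prod_j (\tau_0^k \sigma'_j)\, n_{i_j}^{\pm 1}\, (\tau_0^k \sigma'_j)^{-1}.$$
Choosing $k$ with $k\chi(\tau_0) \geq \max_i c_i$ places each conjugator in the required region, so $u \in \langle T\rangle$, and the proof concludes.
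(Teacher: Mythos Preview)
Your proof is correct and follows essentially the same route as the paper: fix a finite presentation with $\mathcal{C}_\chi$ 1-connected, identify $\pi_1(\mathcal{C}_\chi/N)\cong N$ via the covering $\mathcal{C}_\chi\to\mathcal{C}_\chi/N$, present $H/N$ by adding words $v_i$ for $H_\chi$-generators $n_i$ of $N$, and then argue that the attached $2$-cells kill all of $N$.

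The only difference is in the final step. You observe that the $v_i$-cell at $\bar g$ lies in $\mathcal{D}_{\widetilde\chi}$ only when $\widetilde\chi(\bar g)\ge c_i$, and then use a shift by a power of $\tau_0$ to push conjugators above this threshold. The paper avoids this in two (equivalent) ways: either it invokes the ``essentially trivial inclusion'' characterisation of $\Sigma^2$, noting that there is $d\le 0$ with the $v_i$-loops null-homotopic in $\widehat\Gamma_{\widetilde\chi\ge d}$, so that $\pi_1(\widehat\Gamma_{\widetilde\chi})\to\pi_1(\widehat\Gamma_{\widetilde\chi\ge d})$ is trivial; or, more simply, it enlarges $X$ to contain the $n_i$ themselves, so that each $v_i$ is a single letter, the attached loop has all vertices at $\bar g$, hence $c_i=0$, and the hypothesis $N=\langle {}^{H_\chi}n_1,\ldots,{}^{H_\chi}n_s\rangle$ gives $1$-connectedness of $\widehat\Gamma_{\widetilde\chi}$ directly. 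Your shift argument is a valid substitute for either device, just slightly more laborious.
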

	
	\begin{proof} 
	Since $[\chi] \in \Sigma^2(H)$ there is a finite presentation
	$H = \langle X ~| ~R \rangle$ such that for the Cayley complex associated to this presentation and its subcomplex $\Gamma_{\chi}$ spanned by the vertices $H_{\chi} = \{ h \in H ~| ~\chi(h) \geq 0 \}$ we have that
	$\Gamma_{\chi} $ is 1-connected. 
	The free left $H$-action on $\Gamma$  induces $N$-action on $\Gamma_{\chi}$ and thus we have a covering map
	$$p : \Gamma_{\chi} \to \Gamma_{\chi} / N$$
	Since $\pi_1(\Gamma_{\chi}) = 1$ we have that 
	$$N \simeq \pi_1(\Gamma_{\chi} / N).$$
	Since $N $ is finitely generated as a left $H_{\widetilde{\chi}}$-group, there are elements $b_1, \ldots, b_m \in N$ such that $N = \langle ^{ H_{\chi}} b_1, ~ \ldots, ~^{ H_{\chi}} b_m \rangle$.
	
	Consider the finite presentation $H/N = \langle X ~| ~R, b_1, \ldots, b_m \rangle$. The Cayley complex $\widehat{\Gamma}$ associated to this presentation is obtained from $\Gamma$ by gluing at each vertex extra 2-cells whose boundaries are closed paths $\gamma_1, \ldots, \gamma_m$ with labels that correspond to $b_1, \ldots, b_m$. Then there is a non-positive real number $d$ such  that $\gamma_1, \ldots, \gamma_m$ are closed  paths  homotopic to a point in $\widehat{\Gamma}_{\widetilde{\chi} \geq d}$, where $\widehat{\Gamma}_{\widetilde{\chi} \geq d}$ is the subcomplex of $\widehat{\Gamma}$ spanned by the vertices in $\{ g \in H/N ~| ~\widetilde{\chi}(g) \geq d \}$. Thus $\widehat{\Gamma}_{\widetilde{\chi} \geq 0}$ is $\widehat{\Gamma}_{\widetilde{\chi} }$. 
	
	The fact that 
	$\pi_1(\Gamma_{\chi} / N) \simeq N = \langle ^{ H_{\chi}} b_1, ~ \ldots, ~ ^{ H_{\chi}} b_m \rangle$ implies that the inclusion of spaces  $\widehat{\Gamma}_{\widetilde{\chi} } \subseteq \widehat{\Gamma}_{\widetilde{\chi} \geq d}$ induces  the trivial map
	$\pi_1(\widehat{\Gamma}_{\widetilde{\chi}}) \to  \pi_1(\widehat{\Gamma}_{\widetilde{\chi} \geq d}) $. This is one of the definitions of $\Sigma^2$, hence $\widetilde{\chi} \in \Sigma^2(H/ N)$.
	
	Alternatively we can assume from the very beginning that the fixed generating set $X$ contains a finite fixed subset of $H$. In particular we can assume that $X$ contains the set $\{ b_1, \ldots, b_m \}$. This guarantees that $\widehat{\Gamma}_{\widetilde{\chi}}$ is 1-connected.
	\end{proof}
	
	\begin{cor} \label{abel-top} Let $N$ be a normal subgroup of $\X(G)$, $[\chi] \in \Sigma^2(\X(G))$ such that $\chi(N) = 0$ and $\widetilde{\chi} : \X(G)/ N \to \R$ be the character induced by $\chi$. Suppose further that $N$ is finitely generated as a left $\X(G)_{\widetilde{\chi}}$-group, where $\X(G)$ acts (on the left) on $N$ via conjugation. Then
	$[\widetilde{\chi}] \in \Sigma^2(\X(G)/N)$.
	\end{cor}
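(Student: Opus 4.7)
The plan is to derive Corollary \ref{abel-top} directly from the preceding homotopical lemma by specialising $H$ to $\X(G)$, so the role of the proof is essentially to verify that the hypotheses of that lemma are met in this setting.

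First I would observe that $\X(G)$ is finitely presented. This is forced by the assumption $[\chi] \in \Sigma^2(\X(G))$, because a non-empty $\Sigma^2$-invariant can only occur for groups of type $F_2$, i.e.\ finitely presented groups. Thus the ambient group $H = \X(G)$ satisfies the standing hypothesis of the preceding lemma.

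Next I would check that the remaining data of the lemma are in place: $N$ is a normal subgroup of $H = \X(G)$, the character $\chi : H \to \R$ belongs to $\Sigma^2(H) = \Sigma^2(\X(G))$ and vanishes on $N$, so it induces a well-defined character $\widetilde{\chi} : H/N \to \R$. The identification $H_{\chi} = \X(G)_{\widetilde{\chi}}$ (which is legitimate because $\chi(N) = 0$ makes $\chi(h)$ depend only on $hN$) shows that the finite generation hypothesis we are given, namely that $N$ is finitely generated as a left $\X(G)_{\widetilde{\chi}}$-group under conjugation, is exactly the finite generation hypothesis required by the lemma.

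Having verified all hypotheses, I would invoke the preceding lemma to conclude $[\widetilde{\chi}] \in \Sigma^2(H/N) = \Sigma^2(\X(G)/N)$, which is the desired statement. There is no real obstacle here since the corollary is a literal specialisation; the only substantive content is the finite presentability of $\X(G)$, which is immediate from the non-emptiness of $\Sigma^2(\X(G))$, and the routine identification of the submonoids $H_{\chi}$ and $\X(G)_{\widetilde{\chi}}$ under the pullback along $H \twoheadrightarrow H/N$.
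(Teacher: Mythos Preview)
Your proposal is correct and matches the paper's approach: the corollary is stated immediately after the preceding (unnumbered) homotopical lemma with no separate proof, so it is indeed just the specialisation $H = \X(G)$, and your observation that $\Sigma^2(\X(G)) \neq \emptyset$ forces $\X(G)$ to be finitely presented supplies the one hypothesis not explicitly assumed.
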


\begin{prop} \label{Sigma2top} Let $G$ be a finitely presented group, $[\chi] \in \Sigma^2(\X(G))$, $\chi_0 = (\chi_1, \chi_2): \X(G) / D \simeq G \times G \to \R$  and $\widehat{\chi} : \X(G) / W \to \R$ be the characters induced by $\chi$. Then the following conditions hold:
	
	1. if $\chi_1 \not=  \chi_2$ then $[\chi_0] \in \Sigma^2(\X(G)/ D)$;
	
	2. if $\chi_1 \not=  \chi_2$ then $[\widehat{\chi}] \in \Sigma^2(\X(G)/ W)$;
	
	3. if $\chi_1 = \chi_2$ then $\chi(L) = 0$ and for the character $\widetilde{\chi} : \X(G)/ L \to \R$ induced by $\chi$ we have $[\widetilde{\chi}] \in \Sigma^2( \X(G)/ L)$. Identifying $\X(G)/L$ with $G$ and $\widetilde{\chi}$ with $\chi_1$ we get $[\chi_1] \in \Sigma^2(G)$.
	
	\end{prop}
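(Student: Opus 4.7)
The plan is to mirror the structure of the proof of Proposition \ref{Sigma2}, substituting the homotopical Corollary \ref{abel-top} for the homological Corollary \ref{abel}. Because $\Sigma^2(\X(G))$ is nonempty, $\X(G)$ is finitely presented, and so is its retract $G$. The trichotomy is governed by whether $\chi(L) = 0$, which is equivalent to $\chi_1 = \chi_2$.

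For part (1), I would apply Corollary \ref{abel-top} with $N = D$. Finite presentability of $G$ gives finite presentability of $\X(G)/D \simeq G \times G$; combined with the finite presentability of $\X(G)$, this yields that $D$ is finitely generated as a normal subgroup of $\X(G)$. Since $\chi(L) \neq 0$ we have the factorisation $\X(G) = \X(G)_{\chi} L$, and $[L,D] = 1$ means that $L$ acts trivially on $D$ by conjugation; hence a normal generating set of $D$ inside $\X(G)$ also normally generates $D$ under the conjugation action of $\X(G)_{\chi}$. Corollary \ref{abel-top} then delivers $[\chi_0] \in \Sigma^2(\X(G)/D)$. Part (2) is handled analogously with $N = W$: the group $\X(G)/W$ is a subdirect product of $G \times G \times G$ mapping surjectively on pairs, and the 1-2-3 Theorem of Bridson--Howie--Miller--Short \cite{BHMS} shows it is finitely presented when $G$ is. Hence $W$ is finitely generated as a normal subgroup of $\X(G)$, and because $W \subseteq D$ the same $[L,D] = 1$ trick together with $\X(G) = \X(G)_\chi L$ ensures that $W$ is finitely generated as an $\X(G)_\chi$-group under conjugation, so Corollary \ref{abel-top} yields $[\widehat{\chi}] \in \Sigma^2(\X(G)/W)$.

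For part (3), if $\chi_1 = \chi_2$ then $\chi$ vanishes on $L$ and descends to a character $\widetilde{\chi}$ on $\X(G)/L \simeq G$ identified with $\chi_1$. By \cite[Prop.~2.3]{BK} the subgroup $L$ is finitely generated as a group whenever $G$ is, hence a fortiori finitely generated as an $\X(G)_\chi$-group under conjugation. Corollary \ref{abel-top} applied with $N = L$ then gives $[\chi_1] \in \Sigma^2(G)$. The main technical point in each case is verifying that the relevant kernel ($D$, $W$, or $L$) is finitely generated as an $\X(G)_\chi$-group under conjugation; in parts (1) and (2) the decomposition $\X(G) = \X(G)_\chi L$ together with $[L,D] = 1$ reduces this to finite generation as a normal subgroup of $\X(G)$, which is then supplied by the finite presentability of the appropriate quotient (directly in (1), via the 1-2-3 Theorem in (2)), while in (3) one simply invokes finite generation of $L$ itself from \cite{BK}.
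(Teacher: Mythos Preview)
Your proposal is correct and follows essentially the same route as the paper's proof: apply Corollary \ref{abel-top} in each case, using $[L,D]=1$ together with the factorisation $\X(G)=\X(G)_\chi L$ in parts (1) and (2) to pass from finite normal generation in $\X(G)$ (supplied by finite presentability of $G\times G$ and by \cite{BHMS}, respectively) to finite generation as an $\X(G)_\chi$-group, and invoking \cite[Prop.~2.3]{BK} for the finite generation of $L$ in part (3). The only cosmetic difference is that the paper takes $G$ finitely presented as hypothesis and deduces $\X(G)$ finitely presented via Theorem \ref{fp2}, whereas you reverse this using the retract argument; both are fine.
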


\begin{proof} By Theorem \ref{fp2} since $G$ is finitely presented, $\X(G)$ is finitely presented.
	
	1. By Corollary \ref{abel} applied for $N = D$
	it remains to prove that $D$ is finitely generated as $ \X(G)_{\chi_0}$-group where $ \X(G)_{\chi_0}$ acts (on the left) via conjugation. The fact that $G$ is finitely presented implies that 
	$\X(G) / D \simeq G \times G$ is finitely presented. Hence $D$ is finitely generated as a normal subgroup of $\X(G)$ i.e. is finitely generated as $ \X(G)$-group where $ \X(G)$ acts (on the left) via conjugation.Since $\chi(L) \not= 0$ we have that $\X(G) =  \X(G)_{\chi} L $. This combined with the fact that $[L, D] = 1$ implies that $L$ acts trivially on $D$ via conjugation, hence the $\X(G)$ action on $D$ via conjugation factors through an action of $\X(G)_{\chi}$. 
	
	2.  Note that $\X(G)/ W$ is a subdirect product of $G \times G \times G$ that maps surjectively on pairs. Since by \cite{BHMS}
	the Virtual Surjection Conjecture holds for $n = 2$,  we deduce that $\X(G)/ W$ is finitely presented, hence $W$ is finitely generated as a normal subgroup of $\X(G)$. Since $W$ is abelian this is equivalent to $W$ is finitely generated as  a left $\mathbb{Z} \X(G)$-module via conjugation. Since $\chi(L) \not= 0$ we can use $\X(G) = \X(G)_{\chi} L $ and the fact that $L$ acts trivially on $W$ via conjugation (since $W = L \cap D$ and $[L,D] = 1$), to deduce 	that $W$ is finitely generated as a left $\Z  \X(G)_{\chi}$-module. 
	
	3. Suppose now that $\chi(L) = 0$. This is equivalent to $\chi_1 = \chi_2$. 	
	Consider the decomposition $\X(G) = L \rtimes G$. Then the character $\chi$ induces a character $\widetilde{\chi} : \X(G) / L \to \R$ that after identifying $\X(G)/ L$ with $G$  is the character $\chi_1$. By Corollary \ref{abel-top}  to show that  $[\chi_1] \in \Sigma^2(G)$ it suffices to show that $L$ is finitely generated as a left $\X(G)_{\widetilde{\chi}}$-group. As observed before Bridson and Kochloukova showed in \cite{BK} that $L$ is finitely generated  as a group whenever $G$ is finitely generated.
\end{proof}

{\bf Proof of Proposition D}
Suppose that $[\chi] \in \Sigma^2(\X(G), \Z)$. Then by Proposition \ref{Sigma2} for the induced character $$\chi_0 = ( \chi_1, \chi_2) : \X(G) / D \simeq G \times G \to \R$$
	either $[\chi_0] \in \Sigma^2(G \times G, \Z)$ or $\chi_1 = \chi_2$ and $[\chi_1] \in \Sigma^2(G, \Z)$. Since $\Sigma^2(G, \Z) \subseteq \Sigma^1(G, \Z) = \Sigma^1(G)$ we have that $\Sigma^2(G, \Z)$ is empty  if $\Sigma^1(G)$ is empty. Note that by Theorem \ref{teo-conjec.prod.dir.corpo2} (i.e. the direct product formula holds in dimension 2), hence    $\Sigma^2(G \times G, \Z)$ is empty  if $\Sigma^1(G)$ is empty. 
This completes the proof of Proposition D.
	
	\medskip
	
	\begin{theorem} \cite[Cor.~4.2]{Me} \label{homo123} Let $H$ be a finitely presented group, $N$ a normal subgroup of $H$ that is finitely presented, $\mu : H/N \to \mathbb{R}$ a character and $\pi: H \to H/N$ the canonical epimorphism. Then $[\mu] \in \Sigma^2(H/N)$ if and only if $[\mu \pi] \in \Sigma^2(H)$.
	\end{theorem}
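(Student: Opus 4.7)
Set $\chi := \mu \circ \pi$, so $\chi(N) = 0$ and $\mu$ is the character of $H/N$ induced by $\chi$. The plan is to prove the two implications separately; the backward direction will follow essentially from earlier material, while the forward direction needs a concrete geometric construction of a compatible finite presentation of $H$.

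For the implication $[\chi] \in \Sigma^2(H) \Rightarrow [\mu] \in \Sigma^2(H/N)$, since $N$ is finitely presented it is in particular finitely generated, say by $b_1, \ldots, b_m$. These same elements generate $N$ as a left $H_{\chi}$-group under conjugation (take the conjugator $1 \in H_{\chi}$), so the hypothesis of the unlabelled lemma stated just before Corollary \ref{abel-top} is satisfied, and that lemma gives $[\mu] \in \Sigma^2(H/N)$.

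For the converse $[\mu] \in \Sigma^2(H/N) \Rightarrow [\chi] \in \Sigma^2(H)$, I would choose a finite presentation $H/N = \langle X \mid R \rangle$ whose Cayley $2$-complex $\mathcal{C}'$ has $\mathcal{C}'_{\mu}$ simply connected, a finite presentation $N = \langle Y \mid S \rangle$ with $Y \subseteq N$, and a set-theoretic lift $\widetilde{X} \subseteq H$ of $X$. Each relator $r \in R$ lifts to $\widetilde{r} \in N$, hence equals some $Y$-word $u_r$; each conjugate $\widetilde{x} y \widetilde{x}^{-1}$ with $\widetilde{x} \in \widetilde{X}$ and $y \in Y$ lies in $N$ and so equals some $Y$-word $v_{\widetilde{x},y}$. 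This yields a finite presentation $H = \langle \widetilde{X} \cup Y \mid R' \cup S \cup T \rangle$ with $R' = \{\widetilde{r} u_r^{-1}\}$ and $T = \{\widetilde{x} y \widetilde{x}^{-1} v_{\widetilde{x},y}^{-1}\}$; let $\mathcal{C}$ denote its Cayley $2$-complex. A closed edge-loop $\ell \subseteq \mathcal{C}_{\chi}$ projects to a loop $\pi(\ell) \subseteq \mathcal{C}'_{\mu}$ which bounds a van~Kampen diagram $D'$ built from $R$-cells; lifting $D'$ edge-by-edge into $\mathcal{C}$ produces a disc closing $\ell$ up to a correction loop lying entirely inside one $N$-coset, which is nullhomotopic in $\mathcal{C}$ via $S$- and $T$-cells because $\langle Y \mid S\rangle$ presents $N$ and $T$ records how $\widetilde{X}$ conjugates $N$.

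The main obstacle is level-set control: a priori the lifted $D'$ and the auxiliary $T$-cells may dip below $\{\chi \geq 0\}$. Since $\chi$ vanishes on $N$, every $S$-cell and the $u_r^{-1}$ portion of every $R'$-cell stays at the level of its base vertex; the only dips come from the $\widetilde{r}$ portions of $R'$-cells, uniformly bounded by the depth witnessing simple-connectedness of $\mathcal{C}'_{\mu}$, and from $T$-cells for pairs $(\widetilde{x},y)$ with $\chi(\widetilde{x}) < 0$, uniformly bounded by the finite constant $\max_{\widetilde{x} \in \widetilde{X}} |\chi(\widetilde{x})|$. Given this uniform bound, I would apply the push-up trick from the proofs of Lemma \ref{lemma1.2} and Lemma \ref{lemma1.4}: translate each offending sub-diagram by a sufficiently high power of a fixed element $a$ with $\chi(a) > 0$ and return along an $N$-path of zero $\chi$-value to absorb the translation, after enlarging the generating set so that such an $a$ belongs to it. Verifying that the resulting modifications can be packaged into a single finite presentation whose Cayley $2$-complex is simply connected above $\chi = 0$ --- rather than merely above some lower level $-c$ --- is the delicate technical point, and this is where the full strength of the finite presentability of $N$ (not only its finite generation) is used, via the finite set $S$ bounding the diameter of the $N$-loops one has to fill.
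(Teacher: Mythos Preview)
The paper does not prove this statement: it is quoted from Meinert \cite[Cor.~4.2]{Me} without argument, so there is no proof in the paper to compare yours against. The paper does prove the homological analogue, Theorem~\ref{Npres}, by a completely different method --- the Lyndon--Hochschild--Serre spectral sequence for the short exact sequence of monoids $1 \to N \to H_\chi \to (H/N)_{\widetilde\chi} \to 1$ combined with Bieri's $\mathrm{Tor}$-criterion for $FP_n$ --- and that argument has no homotopical counterpart in the paper.

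Your backward implication is correct and is precisely the unlabelled lemma preceding Corollary~\ref{abel-top}. For the forward implication your outline is along the right lines, but the worry you flag at the end is misplaced. You do \emph{not} need the subcomplex $\mathcal{C}_\chi$ itself to be $1$-connected; it suffices that every loop in $\mathcal{C}_\chi$ becomes null-homotopic in $\mathcal{C}_{\chi\geq -c}$ for a single constant $c\geq 0$ independent of the loop. The paper explicitly invokes this equivalent characterisation of $\Sigma^2$ in the proof of that same unlabelled lemma (``This is one of the definitions of $\Sigma^2$''). Your construction already delivers exactly this: the lifted diagram and the auxiliary $T$-cells dip below their basepoints by at most $\max_{\widetilde x\in\widetilde X}|\chi(\widetilde x)|$, while the $S$-cells and the $u_r$-portions of the $R'$-cells stay at constant $\chi$-level since $\chi(N)=0$. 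Hence the push-up manoeuvre borrowed from Lemmas~\ref{lemma1.2} and~\ref{lemma1.4} is unnecessary, and the finite presentability (rather than mere finite generation) of $N$ enters earlier and more simply: it is what makes $S$ finite and hence the assembled presentation of $H$ finite at all.
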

	We will need the homological version of the above result.
	
	\begin{theorem} \label{Npres} 
	Suppose $N$ is a normal subgroup of $H$ such that both $N$ and $H$ are $FP_2$, $\chi : H \to \R$ is a character such that $\chi(N) = 0$, $\widetilde{\chi} : H / N \to \R$ is the character induced by $\chi$ and $[\widetilde{\chi}] \in \Sigma^2(H/N, \Z)$. Then $[\chi] \in \Sigma^2(H, \Z)$.
\end{theorem}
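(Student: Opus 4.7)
The plan is to construct, at the chain level, a partial free resolution of $\Z$ over $\Z H_\chi$ by a Cartan--Eilenberg-style procedure that combines the given resolutions over $\Z N$ and over $\Z(H/N)_{\widetilde{\chi}}$. The key structural input is that since $\chi(N) = 0$ and $N \subseteq H_\chi$, the preimage in $H_\chi$ of any element of $(H/N)_{\widetilde{\chi}}$ is a single $N$-coset; choosing a transversal $\{s_g\}_{g \in (H/N)_{\widetilde{\chi}}}$ gives
$$
\Z H_\chi \;=\; \bigoplus_{g \in (H/N)_{\widetilde{\chi}}} s_g \cdot \Z N,
$$
so $\Z H_\chi$ is free, hence flat, as a right $\Z N$-module, and by normality of $N$ in $H$ also as a left $\Z N$-module. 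Moreover $\Z H_\chi \otimes_{\Z N} \Z \cong \Z[(H/N)_{\widetilde{\chi}}]$ as left $\Z H_\chi$-modules, with the action factoring through the monoid surjection $H_\chi \twoheadrightarrow (H/N)_{\widetilde{\chi}}$.

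Using that $N$ is $FP_2$, I fix a free resolution $\mathcal{F} \to \Z$ over $\Z N$ with $F_0, F_1, F_2$ finitely generated. Applying $\Z H_\chi \otimes_{\Z N} -$, which is exact by the flatness just noted, produces a free resolution $\widetilde{\mathcal{F}} \to \Z[(H/N)_{\widetilde{\chi}}]$ over $\Z H_\chi$ with terms $\widetilde{F}_j$ finitely generated for $j \leq 2$. Using $[\widetilde{\chi}] \in \Sigma^2(H/N, \Z)$, I fix a partial free resolution $P_2 \to P_1 \to P_0 \to \Z \to 0$ over $\Z(H/N)_{\widetilde{\chi}}$ with each $P_i = \Z(H/N)_{\widetilde{\chi}}^{n_i}$ finitely generated. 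Viewing $P_i$ as a $\Z H_\chi$-module via the quotient, the complex $\widetilde{\mathcal{F}}^{n_i} \to P_i$ is then a $\Z H_\chi$-free resolution of $P_i$ with finitely generated terms in degrees $\leq 2$, and the comparison theorem for projective resolutions lifts the boundaries $d_i : P_i \to P_{i-1}$ to $\Z H_\chi$-linear chain maps between the columns $\widetilde{\mathcal{F}}^{n_i}$.

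I then assemble the pieces into a first-quadrant double complex $C_{i,j} = (\widetilde{F}_j)^{n_i}$ for $0 \leq i, j \leq 2$, with vertical differentials coming from $\widetilde{\mathcal{F}}$ and horizontal differentials given by the lifted chain maps. The spectral sequence obtained by taking vertical homology first degenerates: column-wise homology is concentrated in row $j = 0$ where it equals $P_\bullet$, and then horizontal homology of that row equals $\Z$ in degree $0$ and vanishes in degrees $1$ and $2$. Consequently the total complex $T_n := \bigoplus_{i+j=n} C_{i,j}$ satisfies $H_0(T_\bullet) = \Z$ and $H_n(T_\bullet) = 0$ for $n = 1, 2$, while each $T_n$ with $n \leq 2$ is a finite direct sum of finitely generated free $\Z H_\chi$-modules. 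This is precisely the partial free resolution of $\Z$ over $\Z H_\chi$ needed to certify $[\chi] \in \Sigma^2(H, \Z)$.

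The main obstacle is that $H_\chi$ and $(H/N)_{\widetilde{\chi}}$ are only monoids: in a short exact sequence of genuine groups the entire argument would be subsumed by the Lyndon--Hochschild--Serre spectral sequence, but here the required machinery must be reconstructed by hand from the flatness observation above together with the comparison theorem for projective resolutions. Because only the first two syzygies are needed, the chain-level lifts terminate after finitely many steps and no convergence issue arises; the only genuinely new ingredient, flatness of $\Z H_\chi$ over $\Z N$, follows cleanly from the assumption $\chi(N) = 0$ combined with normality of $N$ in $H$.
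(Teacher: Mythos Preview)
Your argument is essentially correct and takes a genuinely different route from the paper's. Two small inaccuracies, neither of which damages the conclusion: (i) with the truncation $0\le j\le 2$ the columns are only partial resolutions, so their homology is not concentrated in row $0$ --- there is a kernel at $j=2$; if you want the degeneration statement literally, leave the $j$-direction unbounded, which still keeps $T_0,T_1,T_2$ finitely generated free. (ii) The horizontal homology of row $0$ at $i=2$ is $\ker(P_2\to P_1)$, generally nonzero, so $H_2(T_\bullet)\ne 0$. Neither point matters: to certify $FP_2$ over $\Z H_\chi$ you only need exactness of $T_2\to T_1\to T_0\to\Z\to 0$, i.e.\ $H_0(T_\bullet)=\Z$ and $H_1(T_\bullet)=0$, and your spectral-sequence computation does establish both.

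The paper proceeds differently. Rather than building a resolution, it invokes Bieri's criterion: $[\chi]\in\Sigma^2(H,\Z)$ if and only if $\mathrm{Tor}_i^{\Z H_\chi}(\Z,-)$ commutes with direct products for $i\le 1$, which is tested on $V=\prod\Z H_\chi$. An LHS-type spectral sequence for the monoid extension $1\to N\to H_\chi\to (H/N)_{\widetilde\chi}\to 1$ is then used to show $E^2_{1,0}=E^2_{0,1}=0$ and hence $\mathrm{Tor}_1^{\Z H_\chi}(\Z,V)=0$: the hypothesis that $N$ is $FP_2$ makes $\mathrm{Tor}_q^{\Z N}(\Z,-)$ commute with products for $q\le 1$, and $[\widetilde\chi]\in\Sigma^2(H/N,\Z)$ does the same for $\mathrm{Tor}_p^{\Z(H/N)_{\widetilde\chi}}(\Z,-)$. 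The $\mathrm{Tor}_0$ condition (equivalently $[\chi]\in\Sigma^1(H)$) is handled separately via Lemma~\ref{lemma1.4}. Both proofs rest on the same structural fact you isolate --- that $\Z H_\chi$ is free over $\Z N$ because $\chi(N)=0$ and $N\trianglelefteq H$ --- but yours is constructive and bypasses the Bieri criterion entirely, while the paper's is a more recognisable reduction to standard machinery that adapts immediately to analogous $FP_n$ statements once that criterion is in hand.
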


\begin{proof}
	Consider the short exact sequence of groups
	$$1 \to N \to H \to H/N \to 1$$ and the induced short exact sequence of
	monoids
	$$1 \to N \to H_{\chi} \to (H/N)_{\widetilde{\chi}} \to 1$$
	This induces a LHS spectral sequence
	$$ E_{p,q}^2 = H_p ( (H/N)_{\widetilde{\chi}}, H_q(N, V)) = Tor_p^{\Z (H/N)_{\widetilde{\chi}}}(\Z, Tor_q^{\Z N}(\Z,V))$$ that converges to $H_{p+q} (H_{\chi}, V) = Tor_{p+q}^{\Z H_{\chi}}(\Z, V)$ for a fixed $\Z H_{\chi}$-module $V$. 
	We set $V = \prod \Z H_{\chi}$.
	
	Note that $[\widetilde{\chi}] \in \Sigma^2( H/N, \Z)$ is equivalent to $Tor_i^{(H/N)_{\widetilde{\chi}}}(\Z, -)$ commutes with direct products for $i = 0$ and $i = 1$. Note that this follows from Bieri's criterion \cite[Thm.~1.3, iiia)', p. ~12]{Bieribook}.
	By the same argument $[\chi] \in \Sigma^2(H, \Z)$ is equivalent  to  $Tor_i^{\Z H_{\chi}}(\Z, \prod \Z H_{\chi}) = 0$ for $i = 1$ and $Tor_0^{\Z H_{\chi}}(\Z, \prod \Z H_{\chi}) \simeq \prod Tor_0^{\Z H_{\chi}}(\Z, \Z H_{\chi})$ . 
	 
	1)  Note that
	$$
	E_{1,0}^2 =  Tor_1^{\Z (H/N)_{\widetilde{\chi}}}(\Z, Tor_0^{\Z N}(\Z, \prod \Z H_{\chi}  ))
	$$
	Since $N$ is finitely generated, we deduce that $Tor_0^{\Z N}(\Z, -)$ commutes with direct products. Thus 
	\begin{equation} \label{eqi}  H_0(N,   \prod \Z H_{\chi}  ) = 
	Tor_0^{\Z N}(\Z, \prod \Z H_{\chi}  ) \simeq \prod Tor_0^{\Z N}(\Z,  \Z H_{\chi}  ) \simeq \end{equation} $$ \prod  (\Z H_{\chi} / N) \simeq \prod \Z (H/N)_{\widetilde{\chi}}.$$
	Since $[\widetilde{\chi}] \in \Sigma^2((H/N)_{\widetilde{\chi}}, \Z)$ we deduce that
	$$Tor_1^{\Z (H/N)_{\widetilde{\chi}}}(\Z, \prod \Z (H/N)_{\widetilde{\chi}}) \simeq 
	\prod Tor_1^{\Z (H/N)_{\widetilde{\chi}}}(\Z, \Z (H/N)_{\widetilde{\chi}})  = \prod 0 = 0.
	$$
	Combining the above equalities we deduce that 
	$$	E_{1,0}^2 = 0, \hbox{ hence 	}E_{1,0}^{\infty} = 0.$$
	Now 
	$$
	E_{0,1}^2 =  Tor_0^{\Z (H/N)_{\widetilde{\chi}}}(\Z, Tor_1^{\Z N}(\Z, \prod \Z H_{\chi}  )).
	$$
	Note that $N$ is $FP_2$, hence $Tor_1^{\Z N}(\Z, - )$ commutes with direct products. Thus $$Tor_1^{\Z N}(\Z, \prod \Z H_{\chi}  ) \simeq \prod Tor_1^{\Z N}(\Z,  \Z H_{\chi}  ) \simeq \prod 0 = 0, $$
	where we have used that $ \Z H_{\chi} $ is a free $\Z N$-module and so $Tor_1^{\Z N}(\Z,  \Z H_{\chi}  ) = 0$.  Hence
	$$	E_{0,1}^2 =  Tor_0^{\Z (H/N)_{\widetilde{\chi}}}(\Z, 0) = 0.$$
	Then $E_{0,1}^{\infty} = E_{1,0}^{\infty} = 0$. Finally the convergence of the spectral sequence gives a short exact sequence of abelian groups
	$$ 0 \to E_{0,1}^{\infty} \to H_1 (H_{\chi}, \prod \Z \X(G)_{\chi}) \to E_{1,0}^{\infty} \to 0,
	$$
	hence
	$$H_1 (H_{\chi}, \prod \Z H_{\chi})  = 0.$$  
	
	2) It remains to show that $Tor_0^{\Z H_{\chi}}(\Z, \prod \Z H_{\chi}) \simeq \prod Tor_0^{\Z H_{\chi}}(\Z, \Z H_{\chi})$. This is equivalent to $[\chi] \in \Sigma^1 (H)$ and follows from Lemma \ref{lemma1.4}. 
\end{proof}

\begin{cor} \label{Npres2} 
	Suppose $N$ is a normal subgroup of $\X(G)$ such that $N$ and $G$ are  $FP_2$, $\chi : \X(G) \to \R$ is a character such that $\chi(N) = 0$, $\widetilde{\chi} : \X(G) / N \to \R$ is the character induced by $\chi$ and $[\widetilde{\chi}] \in \Sigma^2(\X(G)/N, \Z)$. Then $[\chi] \in \Sigma^2(\X(G), \Z)$.
\end{cor}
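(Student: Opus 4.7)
The plan is essentially to reduce this corollary directly to Theorem \ref{Npres} applied with $H = \X(G)$. To do so I need to verify the hypotheses of that theorem, namely that both $N$ and $H = \X(G)$ are of type $FP_2$, that $\chi(N) = 0$, and that the induced character on the quotient lies in $\Sigma^2$ of the quotient in homological degree two. All of these are either part of the hypothesis or an immediate consequence of standing results.

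First, I would observe that the hypothesis already provides that $N$ is $FP_2$ and that $\chi$ vanishes on $N$, and it gives us $[\widetilde{\chi}] \in \Sigma^2(\X(G)/N, \Z)$. The only missing ingredient is the $FP_2$ property of the ambient group $\X(G)$. For this I would invoke Theorem \ref{fp2} of Bridson and Kochloukova, which says that if $G$ is $FP_2$ then $\X(G)$ is $FP_2$. Since the hypothesis includes that $G$ is $FP_2$, this gives the needed property.

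With all four hypotheses of Theorem \ref{Npres} verified, I would then simply apply that theorem to conclude $[\chi] \in \Sigma^2(\X(G),\Z)$. I do not anticipate any obstacle here; the corollary is really just the specialization $H = \X(G)$ of the general spectral sequence argument in Theorem \ref{Npres}, once Theorem \ref{fp2} is cited to guarantee that $\X(G)$ itself is $FP_2$.
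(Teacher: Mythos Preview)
Your proposal is correct and matches the paper's approach exactly: the corollary is stated immediately after Theorem \ref{Npres} without separate proof, and the only additional input needed is Theorem \ref{fp2} to ensure $\X(G)$ is $FP_2$ when $G$ is.
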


\begin{cor} \label{Lpres} 
	Suppose $\chi : \X(G) \to \R$ is a character such that $\chi(L) = 0$ and $\chi_1 = \chi \mid_G$. Suppose that $L$ is $FP_2$ and that $[{\chi}_1] \in \Sigma^2(G, \Z)$. Then $[\chi] \in \Sigma^2(\X(G), \Z)$.
\end{cor}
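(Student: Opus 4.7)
The plan is to deduce this as an immediate special case of Corollary \ref{Npres2}, taking the normal subgroup $N$ there to be $L = L(G)$. The main observation is that $\X(G) = L \rtimes G$, so the quotient $\X(G)/L$ is canonically identified with $G$, and under this identification the character $\widetilde{\chi} : \X(G)/L \to \R$ induced by $\chi$ corresponds precisely to $\chi_1 = \chi\mid_G$.

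To invoke Corollary \ref{Npres2} I need to verify its hypotheses. First, $L$ is normal in $\X(G)$ by construction and is $FP_2$ by assumption. Second, $G$ is $FP_2$ since the very assumption $[\chi_1] \in \Sigma^2(G,\Z)$ already requires $G$ to be of type $FP_2$. Third, $\chi$ vanishes on $L$ by hypothesis. Finally, via the identification $\X(G)/L \simeq G$ sending $\widetilde{\chi}$ to $\chi_1$, the assumption $[\chi_1] \in \Sigma^2(G,\Z)$ translates directly into $[\widetilde{\chi}] \in \Sigma^2(\X(G)/L,\Z)$.

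With all hypotheses of Corollary \ref{Npres2} verified, we conclude that $[\chi] \in \Sigma^2(\X(G),\Z)$, as desired. There is no real obstacle here; the work has all been done in Theorem \ref{Npres} (and its specialization Corollary \ref{Npres2}), and the content of the present corollary is merely to record the most natural instance, where $N = L$, reducing a $\Sigma^2$-statement on $\X(G)$ to one on the retract $G$.
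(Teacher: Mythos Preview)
Your proof is correct and essentially identical to the paper's: both apply Corollary \ref{Npres2} with $N = L$, identify $\X(G)/L \simeq G$ via the splitting $\X(G) = L \rtimes G$ so that $\widetilde{\chi}$ becomes $\chi_1$, and note that $G$ is $FP_2$ because $\Sigma^2(G,\Z)$ is nonempty. The paper additionally remarks (via Theorem \ref{fp2}) that $\X(G)$ is then $FP_2$, but this is already absorbed into the hypotheses of Corollary \ref{Npres2}, so your omission of it is harmless.
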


\begin{proof} Since $\Sigma^2(G, \Z) \not= \emptyset$ we have that $G$ is $FP_2$, hence by Theorem \ref{fp2} $\X(G)$ is $FP_2$. We apply Corollary \ref{Npres2} for $N = L$ and identify $\X(G)/ L$ with $G$. Note that the character $\widehat{\chi} : \X(G)/ L \to \R$ under  the identification of $\X(G)/L$ with $G$ is identified with $\chi_1$.
	\end{proof}

\section{Proofs of Theorem E1, Theorem E2, Theorem F1, Theorem F2, Corollary G and Corollary H}

Let $H = G \times G \times G$ and $K = Im (\rho)$. Recall that $$Im (\rho) = \{ (g_1, g_2, g_3) \mid g_1 g_2^{-1} g_3 \in G' \},$$ hence
$$[Im (\rho), Im (\rho)] = [H, H].$$

{\bf Proof of Theorem E1}

	Suppose that 
$$\mu = (\mu_1, \mu_2, \mu_3)  : H = G \times G \times G \to \R$$
be a character extending $\chi$. 
By Theorem \ref{thmI} $[\chi] \in \Sigma^2(K,\Z)$ if and only if for every character $\mu$ extending $\chi$ we have $[\mu] \in \Sigma^2(H, \Z)$. Note that in dimension 2 the $\Sigma$ direct product formula holds, see Theorem \ref{teo-conjec.prod.dir.corpo2}, hence $[\mu] \in \Sigma^2(H, \Z)$ precisely if one of the following cases holds for the characters $\mu_1, \mu_2, \mu_3$ :

a) Two characters from $\{ \mu_1, \mu_2, \mu_3 \}$ are 0 and the third corresponds to an element of $\Sigma^2(G, \Z)$;

b) One character from $\{ \mu_1, \mu_2, \mu_3 \}$ is 0, and the other two are non-zero and at least one corresponds to an element of $\Sigma^1(G)$.

c) The three characters $\mu_1, \mu_2, \mu_3$ are non-zero.

Note that since $\mu$ is an extension of $\chi$ we have
$$
\chi((g_1, g_1 g_3, g_3)) = \mu_1(g_1) + \mu_2(g_1 g_3) + \mu_3(g_3).
$$
Hence
\begin{equation} \label{praiagrande1} \chi_1(g_1) = \chi( ( g_1, g_1, 1)) = \mu_1(g_1) + \mu_2(g_1), \hbox{ i.e. } \  \chi_1 = \mu_1 + \mu_2 \end{equation}
and
\begin{equation} \label{praiagrande2} 
\chi_2(g_3) = \chi( (1, g_3, g_3)) = \mu_2(g_3) + \mu_3(g_3) \hbox{ i.e. } \chi_2 = \mu_2 + \mu_3.
\end{equation}

Then $\mu_1 = \chi_1 - \mu_2$ and $\mu_3 = \chi_2 - \mu_2$. Suppose that for each $\mu$ that extends $\chi$ we have   $[\mu] \in \Sigma^2(H, \Z)$.  If $\mu_1 \not=0, \mu_2 \not= 0, \mu_3 \not= 0$ we are in case c) and $[\mu] \in \Sigma^2(H,\Z)$. There are several more cases to consider.

a1) Assume that $\mu_1 = 0 = \mu_2$. Then by (\ref{praiagrande1}) and (\ref{praiagrande2}) $\mu_3 = \chi_2$, $\chi_1 = 0$ and $[\mu] \in \Sigma^2(H,\Z)$  is equivalent to $[\mu_3] \in \Sigma^2(G,\Z)$. Thus 
$$\chi_1 = 0,  [\chi_2] \in \Sigma^2(G, \Z).$$

a2) Assume that $\mu_1 = 0 = \mu_3$. Then by (\ref{praiagrande1}) and (\ref{praiagrande2})  $\chi_1 = \mu_2 = \chi_2$ and  $[\mu] \in \Sigma^2(H,\Z)$  is equivalent to 
 $[\mu_2] \in \Sigma^2(G,\Z)$. Thus  
$$\chi_1 = \chi_2, [\chi_1] \in \Sigma^2(G, \Z).$$

a3) Assume that $\mu_2 = 0 = \mu_3$. Then by (\ref{praiagrande1}) and  (\ref{praiagrande2}) $\chi_2 = 0$, $\mu_1 = \chi_1$ and   $[\mu] \in \Sigma^2(H, \Z)$  is equivalent to  $[\mu_1] \in \Sigma^2(G,\Z)$. Thus
$$\chi_2 = 0,  [\chi_1] \in \Sigma^2(G, \Z).$$

b1) Assume that $\mu_1 = 0, \mu_2 \not=0, \mu_3 \not= 0$. Then by (\ref{praiagrande1}) and (\ref{praiagrande2}) $\mu_2 = \chi_1$, $\mu_3 = \chi_2 - \mu_2 = \chi_2 - \chi_1$.
Then $[\mu] \in \Sigma^2(H, \Z)$  is equivalent to both $\mu_2$ and $\mu_3$ are non-zero and at least one represents an element from $\Sigma^1(G)$ i.e.
$$ \chi_1 \not= 0, \chi_2 \not= \chi_1 \hbox{ and at least one of the elements of } \{ [\chi_1], [\chi_2 - \chi_1] \} \hbox{ belongs to } \Sigma^1(G).$$

b2) Assume that $\mu_3 = 0, \mu_2 \not=0, \mu_1 \not= 0$. Then by (\ref{praiagrande1}) and (\ref{praiagrande2}) $\mu_2 = \chi_2$, $\mu_1 = \chi_1 - \mu_2 = \chi_1 - \chi_2$. 
Then $[\mu] \in \Sigma^2(H, \Z)$  is equivalent to both $\mu_1$ and $\mu_2$ are non-zero and at least one represents an element from $\Sigma^1(G)$ i.e.
$$  \chi_2 \not= 0, \chi_1 \not= \chi_2 \hbox{ and at least one of the elements of } \{ [\chi_2], [\chi_1 - \chi_2] \} \hbox{ belongs to } \Sigma^1(G).$$

b3) Assume that $\mu_2 = 0$, $  \mu_1 \not= 0, \mu_3 \not= 0$.
Then by (\ref{praiagrande1}) and (\ref{praiagrande2}) $\mu_1 = \chi_1$, $\mu_3 = \chi_2$. Then $[\mu] \in \Sigma^2(H, \Z)$  is equivalent to both $\mu_1$ and $\mu_3$ are non-zero and at least one represents an element from $\Sigma^1(G)$ i.e.
$$\chi_2 \not= 0, \chi_2 \not= \chi_1 \hbox{ and at least one of the elements of }  \{ [\chi_1], [\chi_2] \} \hbox{ belongs to } \Sigma^1(G).$$

As a corollary of b1), b2) and b3) 
 if $\chi_1 \not= 0, \chi_2 \not= 0$ and $\chi_1 \not= \chi_2$ one of the following conditions should hold
 $$
 \{  [\chi_1], [\chi_2]  \} \subseteq \Sigma^1(G) \hbox{ or } \{  [\chi_1], [\chi_1 - \chi_2]  \} \subseteq \Sigma^1(G) 
 \hbox{ or } \{  [\chi_2], [\chi_2 - \chi_1]  \} \subseteq \Sigma^1(G) \
 $$ This completes the proof of Theorem E1.
 
 \medskip
 
 We note that the proof of Theorem E2 is similar to the proof of Theorem E1, since we can use the homotopical part of Theorem \ref{thmI}.
 
 \medskip

 	\medskip
{\bf Proof of Theorem F1}

a)	Assume now that $[\chi] \in \Sigma^2(\X(G), \Z)$.  By part 2, Proposition \ref{Sigma2}  if $\chi(L) \not= 0$, i.e. $\chi_1 \not= \chi_2$, we deduce that 	$[\widehat{\chi}] \in \Sigma^2(\X(G)/ W, \Z)$.  

By Proposition \ref{Sigma2} when $\chi_1 = \chi_2$ we have $[\chi_1] \in \Sigma^2(G, \mathbb{Z})$. Then by Theorem E1 $[\widehat{\chi}] \in \Sigma^2(\X(G)/ W, \Z)$.

b)  	Note that by  Theorem \ref{commutator} if $G$ is $FP_2$ and the abelianization of the commutator group $[G,G]$ is finitely generated then  $W(G)$ is finitely generated as abelian group.

Note that by Corollary \ref{Npres2}  for $N = W$  when $W$ is finitely presented (in our case  it is a finitely generated abelian group) and
$[\widehat{\chi}] \in \Sigma^2(\X(G)/ W, \Z)$  we can deduce that $[\chi] \in \Sigma^2(\X(G), \Z)$.

\medskip
{\bf Proof of Theorem F2} a)	Assume now that $[\chi] \in \Sigma^2(\X(G))$.   By part 2, Proposition \ref{Sigma2top} if $\chi(L) \not= 0$, i.e. $\chi_1 \not= \chi_2$, we deduce that 	$[\widehat{\chi}] \in \Sigma^2(\X(G)/ W)$.  

By Proposition \ref{Sigma2top} when $\chi_1 = \chi_2$ we have $[\chi_1] \in \Sigma^2(G)$. Then by Theorem E2 $[\widehat{\chi}] \in \Sigma^2(\X(G)/ W)$.

b)  	
Note that by Corollary  \ref{Npres2}  for $N = W$  when $W$ is finitely presented and
$[\widehat{\chi}] \in \Sigma^2(\X(G)/ W)$  we can deduce that $[\chi] \in \Sigma^2(\X(G))$.

\medskip
{\bf Proof of Corollary G} a) By Theorem A $[\chi] \in \Sigma^1(\X(G))^c$ if and only if one of the following conditions holds:

1) $\chi_2 = 0, [\chi_1] \in \Sigma^1(G) ^c$;

2) $\chi_1 = 0, [\chi_2] \in \Sigma^1(G) ^c$;

3) $\chi_1 = \chi_2 \not= 0$ and $[\chi_1] \in \Sigma^1(G)^c$,

where $\chi_1, \chi_2 : G \to \mathbb{R}$ are characters defined by $\chi_1(g) = \chi(g)$ and $\chi_2(g) = \chi( \overline{g})$.

By the definition of $V_i$ we have that $[\chi] \in V_i$ if and only if $\chi$ satisfies the $i$-th condition. Hence
$$\Sigma^1(G)^c = V_1 \cup V_2 \cup V_3$$

b) Identifying $S(\X(G))$ with $S(\X(G)/ W(G))$ via the projection map $\X(G) \to \X(G)/ W(G)$ and by Theorem F2 we have 
$\Sigma^2(\X(G)) \subseteq \Sigma^2(\X(G)/ W(G))$. Hence 
$$\Sigma^2(\X(G)/ W(G))^c \subseteq \Sigma^2(\X(G))^c$$
By Theorem E2
$[\chi] \in \Sigma^2(\X(G)/W(G))^c$ if and only if on of the following conditions holds:

1) $\chi_2 = 0, [\chi_1] \in \Sigma^2(G) ^c$;

2) $\chi_1 = 0, [\chi_2] \in \Sigma^2(G) ^c$;

3) $\chi_1 = \chi_2 \not= 0$ and $[\chi_1] \in \Sigma^2(G)^c$;

4)$\chi_1 \not= 0, \chi_2 \not= 0$, $\chi_1 \not= \chi_2$ and one of the following holds :

4a) $\{ [\chi_1], [\chi_2] \} \subseteq \Sigma^1(G)^c$;

4b) $\{ [\chi_1], [\chi_2 - \chi_1] \} \subseteq \Sigma^1(G)^c$;

4c) $\{ [\chi_2], [\chi_1 - \chi_2] \} \subseteq \Sigma^1(G)^c$;

By the definition of $W_i$ we have that $[\chi] \in W_i$ if and only if $\chi$ satisfies the $i$-th condition for $1 \leq i \leq 3$.

In the case 4a) $[\chi] = [( \chi_1, \chi_2)] =  [(\chi_1, 0)] + [(0, \chi_2)] $ is a typical element of  $V_1 + V_2$.

In the case 4b) $[\chi] = [( \chi_1, \chi_2)] =[(0, \chi_2 - \chi_1)] + [(\chi_1, \chi_1)]  $ is a typical element of  $V_2 + V_3$. 

In the case 4c) $[\chi] = [( \chi_1, \chi_2)] =  [(\chi_2, \chi_2)] + [(\chi_1 - \chi_2, 0)]  $ is a typical element of  $V_3 + V_1$.

c) The proof is the obvious homological modification of b).

\medskip
{\bf Proof of Corollary H}
 	Let  $\mu : \X(G) / W(G) \to \R$ be a character that vanishes on $N/W(G)$. We define a character $\chi : \X(G) \to \R$ as the composition of the canonical projection $\X(G) \to \X(G)/ W(G)$ with  $\mu$. Thus $\mu = \widehat{\chi}$ and since $\chi(N) = 0$ and $N$ is $FP_2$ (resp. finitely presented) by Theorem \ref{BRhomotopic} $[\chi] \in \Sigma^2(\X(G), \Z)$ (resp.  $[\chi] \in \Sigma^2(\X(G))$). Then by Theorem F1 $[\widehat{\chi}] \in \Sigma^2(\X(G)/ W(G), \Z)$ (resp. by Theorem F2 $[\widehat{\chi}] \in \Sigma^2(\X(G)/ W(G))$ ), hence by Theorem \ref{BRhomotopic} again $N/ W(G)$ is $FP_2$ (resp. finitely presented). 
 	
 	Consider the case $N = \X(G)'$. Note that $\X(G) = L \rtimes G$, hence $\X(G)' = \langle L', [L,G] \rangle \rtimes G'$. Suppose that $\X(G)'$ is $FP_m$ for some $m \geq 1$ (resp. finitely presented). Since property $FP_m$    (resp. finitely presented) passes to retracts we deduce that $G'$ is $FP_m$  (resp. finitely presented). 
 	
 	For the converse assume that $G'$ is $FP_m$ for some $m \geq 1$  (resp. finitely presented). Since $FP_1$ is equivalente with finite generation, we have that $G'$ is finitely generated  and by Theorem \ref{commutator} $W(G)$ is finitely generated. Then  $$\X(G)' / W = Im (\rho)' = G' \times G' \times G' \hbox{  is }FP_m \hbox{ (resp. finitely presented). }$$ Since $W$ is abelian and finitely generated, it is of type $FP_m$ and finitely presented. This implies that $\X(G)'$ is $FP_m$ (resp. finitely presented) as claimed and completes the proof of Corollary H.
 	
 	\medskip
 	{\bf Remark} Though it is tempting to study the structure of $\Sigma^n(\X(G), \Z)$  and $\Sigma^n(\X(G))$ for $n \geq 3$ there are some structural problems. Firstly, we do not have a criterion when $\X(G)$ is of type $FP_3$ and by \cite{BK} even for nice groups such as  finite rank free non-cyclic groups $G$ the group $\X(G)$ is not $FP_3$. Secondly, if we want to find a higher dimensional version of Theorem E1 and Theorem E2, it is natural to  apply Theorem \ref{thmI} for $H = G \times G \times G$ and $K = Im (\rho) \simeq \X(G)/ W(G)$, where the map $\rho : \X(G) \to H$  will be discussed in the preliminary section \ref{prel-X(G)}.  But though there is a direct product $\Sigma$-formula that holds for the homological $\Sigma$-invariants with coefficients in a field \cite{B-G} a similar formula does not hold for $\Sigma^n ( -, \mathbb{Z})$ for $n \geq 4$ \cite{Schutz} or for $\Sigma^n( -)$  for $n \geq 3$ \cite{Meinert-VanWyk}.
 	
	\section{On the finite presentability of the non-abelian tensor square and on the $\Sigma$-invariants of $\nu(G)$} \label{section-tensor}

	Let $G$ be a group. In \cite{Rocco}  Rocco defined a group  given by the following presentation
$$\nu(G) = \langle G, \overline{G} \mid [g_1, \overline{g}_2]^{g_3} = [ g_1^{g_3}, \overline{g_2^{g_3}}] = [g_1, \overline{g}_2]^{\overline{g}_3}\rangle, $$
where $\overline{G}$ is an isomorphic copy of $G$. By \cite{Rocco} for the non-abelian tensor square $G \otimes G$ we have an isomorphism $$G \otimes G \simeq [G, \overline{G}],$$
where $[G, \overline{G}]$ is the subgroup of $\nu(G)$ generated by $\{  [g_1, \overline{g}_2] ~| g_1 \in G, \overline{g}_2 \in \overline{G} \}$.
Furthermore by \cite{Rocco} there is a subgroup $\Delta \subseteq \nu(G)' \cap Z(\nu(G))$ such that
\begin{equation} \label{equ0} \nu(G)/ \Delta \simeq \X(G)/ R,\end{equation}
with an isomorphism that is an identity on $G \cup \overline{G}$,
$R$ is a special normal subgroup of $\X(G)$ that is contained in $W = W(G)$ and $W/ R \simeq H_2(G, \mathbb{Z})$. Thus
$\Delta$ is a quotient of $H_2(\nu(G)/ \Delta, \mathbb{Z}) \simeq 
H_2(\X(G)/ R, \mathbb{Z})$.

\begin{lemma} \label{neueW} Let $G$ be a group of type $FP_2$. Then $\Delta$ is finitely generated and there is a normal subgroup $W_0$ in $\nu(G)$ such that $W_0$ is finitely generated nilpotent of class at most 2 and 
\begin{equation} \label{iso321} \nu(G)/ W_0 \simeq \X(G)/ W. \end{equation}
\end{lemma}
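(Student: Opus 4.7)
The natural candidate for $W_0$ is the preimage of $W/R$ under the composite epimorphism
$$\nu(G) \twoheadrightarrow \nu(G)/\Delta \xrightarrow{\sim} \X(G)/R.$$
With this definition one automatically obtains a short exact sequence
$$1 \to \Delta \to W_0 \to W/R \to 1$$
and $\nu(G)/W_0 \simeq (\X(G)/R)/(W/R) \simeq \X(G)/W$, which is the required identification in (\ref{iso321}); normality of $W_0$ in $\nu(G)$ is inherited from normality of $W$ in $\X(G)$. The substantive content of the lemma is therefore that $\Delta$ and $W/R$ are finitely generated, and that $W_0$ is nilpotent of class at most $2$.

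First I would show that $W/R$ is finitely generated. Since $G$ is of type $FP_2$, the homology group $H_2(G, \mathbb{Z})$ is a finitely generated abelian group, and by the given isomorphism $W/R \simeq H_2(G,\mathbb{Z})$ we are done. Next, to prove $\Delta$ is finitely generated it is enough to show $H_2(\X(G)/R, \mathbb{Z})$ is finitely generated, since $\Delta$ is a quotient of this group. By Theorem \ref{fp2}, $\X(G)$ is $FP_2$, and by Theorem \ref{homological1-2-3} applied to the subdirect embedding $\X(G)/W \hookrightarrow G \times G \times G$ (which surjects on every pair), $\X(G)/W$ is of type $FP_2$. Inserting $W/R$ into the extension
$$1 \to W/R \to \X(G)/R \to \X(G)/W \to 1,$$
where $W/R$ is finitely generated abelian (hence $FP_\infty$) and $\X(G)/W$ is $FP_2$, we conclude by the standard extension closure of $FP_2$ that $\X(G)/R$ is of type $FP_2$. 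Consequently $H_2(\X(G)/R,\mathbb{Z})$ is finitely generated, and so is its quotient $\Delta$.

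Finally I would deduce the two assertions about $W_0$ simultaneously. Because both $\Delta$ and $W/R$ are finitely generated, the middle term of
$$1 \to \Delta \to W_0 \to W/R \to 1$$
is also finitely generated. Since $\Delta \subseteq Z(\nu(G))$ by the assumption recalled before the lemma, $\Delta$ lies in the centre of $W_0$, and the quotient $W_0/\Delta \simeq W/R$ is abelian; this shows that $W_0$ is nilpotent of class at most $2$.

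The main obstacle I anticipate is verifying the $FP_2$ property of $\X(G)/R$; the key input is the homological $1$-$2$-$3$ theorem (Theorem \ref{homological1-2-3}), without which one could not conclude that $\X(G)/W$ is $FP_2$ and hence could not pass finite generation through to $H_2(\X(G)/R, \mathbb{Z})$. Everything else is straightforward diagram-chasing and an application of standard closure properties of $FP_2$ under central extensions by finitely generated abelian groups.
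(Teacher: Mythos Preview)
Your proof is correct and follows essentially the same route as the paper: define $W_0$ as the preimage of $W/R$, use $FP_2$ of $G$ to get $W/R\simeq H_2(G,\mathbb{Z})$ finitely generated, then use $FP_2$ of $\X(G)/W$ together with extension-closure of $FP_2$ to deduce $\X(G)/R$ is $FP_2$ and hence $\Delta$ is finitely generated, and conclude via the central extension $1\to\Delta\to W_0\to W/R\to 1$. The only cosmetic difference is that the paper cites \cite[Thm.~D]{KochSidki} directly for $\X(G)/W$ being $FP_2$, whereas you invoke Theorem~\ref{homological1-2-3}; your extra appeal to Theorem~\ref{fp2} is harmless but not actually needed.
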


\begin{proof} Let $W_0$ be the normal subgroup of $\nu(G)$ such that
$W_0 / \Delta$ is the preimage of $W/ R$ in $\nu(G) / \Delta$ under the isomorphism (\ref{equ0}).
Then $$W_0 / \Delta \simeq W/ R \simeq H_2(G, \mathbb{Z})$$
and $\Delta$ is a quotient of $
H_2(\X(G)/ R, \mathbb{Z})$. Since $G$ is $FP_2$ then $H_2(G, \mathbb{Z})$ is finitely generated, hence $W / R \simeq H_2(G, \mathbb{Z})$ is a finitely generated abelian group, hence is finitely presented and so is $FP_2$. Furthermore when $G$ is $FP_2$ by \cite[Thm. ~ D]{KochSidki} $\X(G)/ W$ is $FP_2$. Since the property $FP_2$ is extension closed \cite[Exer., ~p.~23] {Bieribook} we deduce that $\X(G)/ R$ is $FP_2$, hence $H_2(\X(G)/ R, \mathbb{Z})$ is finitely generated. Then  its quotient $\Delta$ is a finitely generated central subgroup of $\nu(G)$. Then $W_0$ is nilpotent of class at most 2 and  both $\Delta$ and $W_0/ \Delta \simeq H_2(G, \mathbb{Z})$ are finitely generated. Then we can deduce that $W_0$ is finitely generated.
\end{proof}

\medskip
{\bf Proof of Proposition J} 
Let $W_0$ be the normal subgroup of $\nu(G)$ given by Lemma \ref{neueW}. 
 By construction
$$\nu(G)/ W_0 \simeq \X(G)/ W \simeq Im (\rho).$$
Since $W \subseteq [G, \overline{G}] \subseteq \X(G)$ we have that $W_0 \subseteq [G, \overline{G}] \subseteq \nu(G)$, hence
$$[G, \overline{G}]/ W_0 \simeq [\rho(G), \rho(\overline{G})] \subseteq Im (\rho)  \simeq \X(G)/ W.$$ 
Since $[\rho(G), \rho(\overline{G})] = 1 \times G'\times 1 \simeq G '$ is finitely presented (resp. $FP_2$), $[G, \overline{G}]/ W_0$ is finitely presented (resp. $FP_2$).
 Combining with the fact that $W_0$ is finitely presented, we conclude that 
$$G \otimes G \simeq [G, \overline{G}] $$
is finitely presented (resp. $FP_2$). This completes the proof of Proposition J.

\begin{prop} Let $G$ be a group. We identify $S(\nu(G))$ with $S(\nu(G)/ W_0)$ via the canonical projection map $\nu(G) \to \nu(G)/ W_0$ and we identify $S(\nu(G)/ W_0)$ with $S(\X(G)/ W)$ via the isomorphism (\ref{iso321}). Then

a)  $\Sigma^2(\nu(G)) = \Sigma^2(\X(G)/ W)$ if $G$ is finitely presented;

b) $\Sigma^2(\nu(G), \mathbb{Z}) = \Sigma^2(\X(G)/ W, \mathbb{Z})$  
 if $G$ is $FP_2$;

{Remark} We recall that in Theorem E1 and Theorem E2 we have calculated  both $\Sigma^2(\X(G)/ W, \mathbb{Z})$ and $\Sigma^2(\X(G)/ W)$.
\end{prop}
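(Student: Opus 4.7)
\medskip
\noindent\textbf{Proof proposal.} The plan is to mimic the proof of Theorem F1 and Theorem F2, replacing the pair $(\X(G),W)$ by $(\nu(G),W_0)$. More precisely, Lemma~\ref{neueW} gives us a normal subgroup $W_0 \triangleleft \nu(G)$, finitely generated nilpotent of class at most $2$, with $\nu(G)/W_0 \simeq \X(G)/W$, and this should play exactly the role that $W$ plays in Theorem F1 and Theorem F2. So the strategy is: (i) verify that the identification $S(\nu(G)) = S(\nu(G)/W_0)$ is legitimate, (ii) verify that $\nu(G)$ has enough finiteness to apply the general results relating $\Sigma^2$ of a group to $\Sigma^2$ of a quotient by a finitely generated (or finitely presented) normal subgroup, and (iii) quote those results directly.

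For step (i), I would first check that $W_0 \subseteq \nu(G)'$, so that every character of $\nu(G)$ vanishes on $W_0$ and hence descends uniquely through the projection $\nu(G) \to \nu(G)/W_0$. This follows since $\Delta \subseteq \nu(G)'$ by its definition in Lemma~\ref{neueW}, while $W_0/\Delta$ corresponds under the isomorphism $\nu(G)/\Delta \simeq \X(G)/R$ to $W/R$, and $W \subseteq [G,\overline G] \subseteq \X(G)'$. Thus the projection induces a bijection $S(\nu(G)/W_0) \to S(\nu(G))$ and, composed with the isomorphism (\ref{iso321}), with $S(\X(G)/W)$.

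For step (ii), I need $\nu(G)$ to be finitely presented when $G$ is, and $FP_2$ when $G$ is $FP_2$. In both cases $W_0$ is finitely generated (Lemma~\ref{neueW}) and, being virtually polycyclic, is finitely presented and of type $FP_\infty$. The quotient $\nu(G)/W_0 \simeq \X(G)/W$ is finitely presented (resp.\ $FP_2$) by \cite{BHMS} (resp.\ \cite[Thm.~D]{KochSidki}) applied to the subdirect product $\X(G)/W \hookrightarrow G \times G \times G$. Since both finite presentability and $FP_2$ are extension closed, $\nu(G)$ itself is finitely presented (resp.\ $FP_2$).

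For step (iii), part a) follows immediately from Theorem~\ref{homo123} applied with $H = \nu(G)$ and $N = W_0$: since both are finitely presented, a character on $\nu(G)/W_0$ lies in $\Sigma^2(\nu(G)/W_0)$ if and only if its pullback lies in $\Sigma^2(\nu(G))$. For part b) the two directions split: the direction $\Sigma^2(\nu(G),\Z) \subseteq \Sigma^2(\nu(G)/W_0,\Z)$ follows from Lemma~\ref{abel123} applied to $H=\nu(G)$ and $N=W_0$, since $W_0/[W_0,W_0]$ is a finitely generated abelian group, hence a fortiori finitely generated as a $\Z(\nu(G)/W_0)_{\widetilde\chi}$-module; the reverse inclusion follows from Theorem~\ref{Npres}, whose hypotheses are satisfied because $W_0$ and $\nu(G)$ are both $FP_2$. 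The main obstacle I foresee is simply the bookkeeping involved in checking $W_0 \subseteq \nu(G)'$ and in transferring the finiteness properties of $\X(G)/W$ across the isomorphism (\ref{iso321}); everything else is a direct invocation of the machinery already developed for $\X(G)$.
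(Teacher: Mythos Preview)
Your proposal is correct and follows essentially the same route as the paper: use Lemma~\ref{neueW} to see that $W_0$ is finitely generated nilpotent (hence finitely presented and $FP_\infty$), check $W_0\subseteq\nu(G)'$, and then invoke Theorem~\ref{homo123} for part a) and the pair Lemma~\ref{abel123}/Theorem~\ref{Npres} for part b). The only difference is that you spell out more carefully why $\nu(G)$ itself has the required finiteness properties (via the extension by $W_0$ and the known finiteness of $\X(G)/W$), which the paper leaves implicit.
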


\begin{proof}
a), b) In both cases $G$ is $FP_2$ and by Lemma \ref{neueW}. $W_0$ is finitely generated nilpotent of class at most 2. Hence $W_0$ is finitely presented. By construction $W_0 \subseteq \nu(G)'$ and identifying $\nu(G)/ W_0$ with $\X(G)/ W$ we have 
$\Sigma^2(\nu(G)/ W_0) = \Sigma^2(\X(G)/ W)$ and
$\Sigma^2(\nu(G)/ W_0, \mathbb{Z}) = \Sigma^2(\X(G)/ W, \mathbb{Z})$ whenever the invariants are defined.

If $G$ is finitely presented by Theorem \ref{homo123} $\Sigma^2(\nu(G)) = \Sigma^2(\nu(G)/ W_0)$.

If $G$ is $FP_2$ then by Lemma \ref{abel123} and Theorem \ref{Npres} $\Sigma^2(\nu(G), \mathbb{Z}) = \Sigma^2(\nu(G)/ W_0, \mathbb{Z})$. We observe that here we can apply Lemma \ref{abel123} since $W_0/[W_0, W_0]$ is finitely generated.
\end{proof}

If we want to calculate $\Sigma^1(\nu(G))$ only under the assumption that $G$ is finitely generated we cannot assume that $W_0$ is finitely generated since $G$ in general is not of type $FP_2$. 

But we can follow the ideas from the proofs from Section \ref{section-sigma1}. To do so we need to define two groups in $\nu(G)$ that would play the roles of $D$ and $L$ from $\X(G)$. Here we set
$$L_{\nu}(G) \hbox{ as the subgroup of } \nu(G) \hbox{ generated by } \{g \overline{g}^{ -1}  ~| ~g \in G \}$$
 and $$D_{\nu}(G) = [G, \overline{G}] \hbox{ in }\nu(G).$$ Note that both are normal subgroups in $\nu(G)$ and that the defining relations of $\nu(G)$ imply that
\begin{equation} \label{nu-commutator-relation} [D_{\nu}(G), L_{\nu}(G)] = 1.\end{equation}
 Note that $\nu(G)/ D_{\nu}(G) \simeq G \times \overline{G} \simeq \X(G)/ D$. 
 
 \begin{lemma} \label{abel0} Let $G$ be a finitely generated group. Then $L_{\nu}(G)$ is finitely generated.
 \end{lemma}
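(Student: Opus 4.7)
The plan is to adapt the proof from \cite{BK} showing that $L(G) \subseteq \X(G)$ is finitely generated when $G$ is finitely generated. The transfer is natural because the structural ingredients of that proof have exact analogues for $\nu(G)$: the semidirect decomposition $\nu(G) = L_\nu(G) \rtimes G$ (with $L_\nu(G)$ the kernel of the retraction $\nu(G) \to G$ sending both $g$ and $\overline{g}$ to $g$); the commutation relation $[L_\nu(G), D_\nu(G)] = 1$ recorded in (\ref{nu-commutator-relation}); and the identity $a_{gh} = ({}^{g}a_h) \cdot a_g$ (for $a_g := g\overline{g}^{-1}$), which follows immediately from $\overline{gh} = \overline{g}\,\overline{h}$.

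Let $X = \{x_1, \ldots, x_n\}$ be a finite generating set of $G$. Iterating the identity above along a word for $g \in G$ in letters of $X^{\pm 1}$ shows that every $a_g$ is a product of $\nu(G)$-conjugates of elements of $\{a_{x_i}^{\pm 1}\}$, so $L_\nu(G)$ is the normal closure in $\nu(G)$ of the finite set $\{a_{x_i}\}$. To promote this to finite generation as a group, I would consider the homomorphism $\phi \colon \nu(G) \to G \times G$ with $g \mapsto (g,g)$ and $\overline{g} \mapsto (g,1)$; direct verification of the defining relations of $\nu(G)$ shows it is well-defined, and one computes $\phi({}^{g} a_x) = (1, gxg^{-1}) = \phi(a_{gxg^{-1}})$. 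Hence ${}^{g} a_x \cdot a_{gxg^{-1}}^{-1} \in \ker\phi \cap L_\nu(G) = W_\nu(G)$, and combined with $[L_\nu(G), D_\nu(G)] = 1$ this lets us rewrite every conjugate ${}^{g} a_x$ modulo $W_\nu(G)$ as a word in the $a_{x_i}$. The problem then reduces to showing that $W_\nu(G)$ is finitely generated as an abelian group.

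The main obstacle is this last reduction. My approach is to invoke the epimorphism $\nu(G) \twoheadrightarrow \nu(G)/\Delta \simeq \X(G)/R$ from \cite{Rocco}, which carries $L_\nu(G)$ onto $L(G)/R$; since $L(G)$ is finitely generated by \cite{BK}, so is $L_\nu(G)/\Delta$. It then remains to show that $\Delta = \langle [g, \overline{g}] : g \in G \rangle \subseteq Z(\nu(G))$ is finitely generated, which follows from the near-bilinearity modulo $[D_\nu(G), \nu(G)]$ of the commutator map $(g, h) \mapsto [g, \overline{h}]$ implied by the defining relations of $\nu(G)$: expanding $[gh, \overline{gh}]$ via standard commutator identities shows $\Delta$ is generated, modulo $\Delta \cap [D_\nu(G), \nu(G)]$, by the finitely many elements $\{[x_i, \overline{x_i}]\} \cup \{[x_i, \overline{x_j}][x_j, \overline{x_i}]\}$, and the remaining piece requires the same inductive bookkeeping as in \cite{BK}.
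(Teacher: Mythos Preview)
Your second paragraph (the map $\phi$ and the reduction to $W_\nu(G)$) is never used: you immediately abandon it in favour of the $\Delta$-based reduction in the third paragraph. Drop it. The actual content of your argument is then: (i) $L_\nu(G)/\Delta \cong L(G)/R$ is finitely generated, via the isomorphism $\nu(G)/\Delta \cong \X(G)/R$ and \cite[Prop.~2.3]{BK}; and (ii) $\Delta$ is finitely generated. Step (i) is correct and shared with the paper. Step (ii) is where the gap lies: your appeal to ``the same inductive bookkeeping as in \cite{BK}'' is empty, since \cite{BK} treats $L(G)$ inside $\X(G)$, where no analogue of $\Delta$ exists. What you need is an independent argument, namely: writing $\delta(g)=[g,\overline g]$ and $\tau(g,h)=[g,\overline h][h,\overline g]$, one checks from the defining relations of $\nu(G)$ and centrality of $\Delta$ that $\delta(gh)=\delta(g)\delta(h)\tau(g,h)$ and that $\tau$ is biadditive; hence $\Delta$ is generated by the finitely many $\delta(x_i)$ and $\tau(x_i,x_j)$. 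This is standard in the non-abelian tensor square literature (it is the statement that $\nabla(G)=\langle g\otimes g\rangle$ is finitely generated for finitely generated $G$), but it must be supplied, not gestured at.

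The paper takes a genuinely different route that sidesteps the question of whether $\Delta$ is finitely generated. It shares your step (i), but replaces your step (ii) with a direct proof that the \emph{abelianization} $L_\nu(G)/L_\nu(G)'$ is finitely generated: using Sidki's isomorphism $\mathcal E(G)\cong \mathrm{Aug}(\mathbb Z G)\rtimes G$ one identifies $L_\nu(G)/L_\nu(G)'$ with $\mathrm{Aug}(\mathbb Z G)/I$ for a certain left ideal $I$, and an explicit calculation with the defining relations of $\nu(G)$ shows $(t_1-1)(t_2-1)(t_3-1)\in I$ for all $t_i\in G$, so the quotient is generated as a $\mathbb Z$-module by degree-$\leq 2$ monomials in the generators. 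The conclusion then comes from \cite[Lemma~2.2]{BK}, which says that for a central extension $1\to\Delta\to L_\nu(G)\to L_\nu(G)/\Delta\to 1$, finite generation of the quotient together with finite generation of the abelianization of the middle term forces the middle term to be finitely generated --- with no hypothesis on $\Delta$. Your route, once (ii) is properly proved, is more elementary (the extension argument becomes trivial); the paper's route trades that for the augmentation-ideal computation and the central-extension lemma.
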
 
 
 \begin{proof}
1)  We will prove first that the abelianization of $L_{\nu}(G)$ is finitely generated. By \cite[Thm.~2.1.1]{Said} the group $$\mathcal{E}(G) = \langle G, \overline{G} ~| ~ [\langle g \overline{g}^{ -1} \rangle, \langle g \overline{g}^{ -1} \rangle] = 1 \rangle$$ is isomorphic to $Aug(\mathbb{Z} G) \rtimes G$. Here $Aug(\mathbb{Z} G)$ is the augmentation ideal of $\mathbb{Z} G$. We write an element of $Aug(\mathbb{Z} G) \rtimes G$ as $(\lambda, g)$ where $\lambda \in Aug(\mathbb{Z} G)$ and $g \in G$. The product is $(\lambda_1, g_1) (\lambda_2, g_2) = (\lambda_1 + g_1 \lambda_2, g_1 g_2)$ and the isomorphism $$\theta: \mathcal{E}(G) \to Aug(\mathbb{Z} G) \rtimes G$$ sends $g \in G$ to $(0,g)$ and $\overline{g} \in \overline{G}$ to $(g-1,g)$. Note that $\theta$ induces an isomorphism
\begin{equation} \label{equ-12} \nu(G)/ L_{\nu}(G)' \simeq (Aug(\mathbb{Z} G) / I) \rtimes G,\end{equation}
where $I$ is a left ideal of $Aug(\mathbb{Z} G)$ and the above isomorphism restricted to the abelianization of $L_{\nu}(G)$ is
$$L_{\nu}(G)/ L_{\nu}(G)' \simeq Aug(\mathbb{Z} G) / I. $$

By the definition of $\theta$
 $$\theta([g_1, \overline{g}_2]) = \theta(g_1^{-1}) \theta(\overline{ g}_2^{-1}) \theta(g_1) \theta(\overline{g}_2) = (0, g_1^{ -1}) (g_2^{ -1} -1, g_2^{ -1} )(0, g_1)(g_2 -1, g_2) =$$$$ ( \alpha(g_1, g_2), [g_1, g_2]),$$
 where
 $$\alpha(g_1, g_2) = g_1^{ -1} (g_2^{ -1} -1) + g_1^{ -1} g_2^{ -1} g_1( g_2 - 1) = g_1^{ -1} g_2^{ -1} + [g_1, g_2] - g_1^{ -1}  - g_1^{ -1} g_2^{ -1} g_1.$$
 The relation
 $[g_1, \overline{g}_2]^{g_3} = [ g_1^{g_3}, \overline{g_2^{g_3}}]$ in $\nu(G)$, where $g_1, g_2 \in G, \overline{g}_2 \in \overline{G}$, implies 
 $$
  (g_3^{ -1} \alpha(g_1, g_2), [g_1^{ g_3} , g_2^{ g_3}] )= (0, g_3^{ -1} ) ( \alpha(g_1, g_2), [g_1, g_2]) ( 0, g_3) =$$ $$ \theta(g_3^{ -1}) \theta([g_1, \overline{g}_2]) \theta(g_3) = \theta([g_1, \overline{g}_2]^{g_3}) =$$ $$ \theta([ g_1^{g_3}, \overline{g_2^{g_3}}]) = ( \alpha(g_1^{g_3}, g_2^{g_3}), [g_1^{ g_3}, g_2^{ g_3}]) \hbox{ in } (Aug(\mathbb{Z} G)/ I) \rtimes G.$$
Then
 $$ g_3^{ -1} \alpha(g_1, g_2) = \alpha(g_1^{g_3}, g_2^{g_3}) = g_3^{ -1} \alpha(g_1, g_2) g_3 \hbox{ in } Aug(\mathbb{Z} G)/ I$$
 and so
 \begin{equation} \label{equ1} \alpha(g_1, g_2) ( g_3 -1) \in I.\end{equation}
 
Note that $$
\theta(g_3\overline{g}_3^{ -1}) =  (0, g_3) (g_3^{ -1} -1, g_3^{ -1}) = (1 - g_3, 1)$$
and
$$
\theta([g_1, \overline{g}_2]^{g_3 \overline{g}_3^{ -1}} ) = \theta(g_3 \overline{g}_3^{ -1})^{ -1} \theta([g_1, \overline{g}_2]) \theta({g_3 \overline{g}_3^{ -1}} )
=(g_3 -1, 1) (\alpha( g_1, g_2), [g_1, g_2]) ( 1 - g_3, 1) =$$ $$
(g_3 -1 + \alpha( g_1, g_2) + [g_1, g_2](1 - g_3), [g_1, g_2]).$$
The relation $[g_1, \overline{g}_2]^{g_3 \overline{g}_3^{ -1}} = [g_1, \overline{g}_2]$
  in $\nu(G)$ implies
 $$g_3 -1 + \alpha( g_1, g_2) + [g_1, g_2](1 - g_3) =  \alpha( g_1, g_2) \hbox{ in } Aug(\mathbb{Z} G)/I,$$
 hence
 \begin{equation} \label{equ2} ([g_1, g_2] - 1) ( g_3 -1) = 0 \hbox{ in } Aug(\mathbb{Z} G)/I.\end{equation}
 Since 
 $$
 \alpha(g_1, g_2) = g_1^{ -1} g_2^{ -1} + 1 - g_1^ { -1} - g_2^{ -1}  = (g_1^{ -1} -1) ( g_2^{ -1} - 1) \hbox{ in } Aug(\mathbb{Z} G)/Aug (\mathbb{Z} G ')
 $$
 by (\ref{equ1}), (\ref{equ2}) we get
 $$0 = \alpha(g_1, g_2) (g_3 -1) = (g_1^{ -1} -1) ( g_2^{ -1} - 1)(g_3 - 1) \hbox{ in } Aug(\mathbb{Z} G)/I.$$
 Thus for any $t_1, t_2, t_3 \in G$ we have
 \begin{equation} \label{equ3} (t_1 - 1) (t_2 - 1) ( t_3 - 1) \in I.\end{equation}
 Note that if $X$ is a finite generating set of $G$ then $Aug(\mathbb{Z} G)$ as a $\mathbb{Z}$-module is generated by $\cup_{k \geq 1} Y_k$, where $Y_k = \{ (x_{i_1}-1) \ldots (x_{i_k} -1) ~| x_{i_1}, \ldots, x_{i_k} \in X \cup X^{ -1}  \}$. Using (\ref{equ3}) temos que 
 $Aug(\mathbb{Z} G) / I$ as $\mathbb{Z}$-module is generated by 
the finite set  $Y_1 \cup Y_2$.
 
2) Finally we prove that $L_{\nu}(G)$ is finitely generated. Since we have a central extension $1 \to \Delta \to L_{\nu}(G) \to L_{\nu}(G)/ \Delta \to 1$ by  \cite[Lemma 2.2]{BK}  $L_{\nu}(G)$ is finitely generated if $L_{\nu}(G)/ \Delta$ is finitely generated and  the abelianization of $L_{\nu}(G)$ is finitely generated.  Now the isomorphism (\ref{equ0}) induces an isomorphism $L_{\nu}(G)/ \Delta \simeq L/ R$ and since $L$ is finitely generated \cite[Prop.~2.3]{BK}, $L_{\nu}(G)/ \Delta$ is finitely generated too. 
 \end{proof}

 \begin{prop} Let $G$ be a finitely generated group. We identify $S(\nu(G))$ with $S(\nu(G)/ W_0)$ via the canonical projection map $\nu(G) \to \nu(G)/ W_0$ and we identify $S(\nu(G)/ W_0)$ with $S(\X(G)/ W)$ via the isomorphism (\ref{iso321}).  Then $$\Sigma^1(\nu(G), \mathbb{Z}) = \Sigma^1(\X(G)/ W, \mathbb{Z}).$$
 
 {Remark} We recall that $\Sigma^1(\X(G)/ W, \mathbb{Z})$ was calculated in Section \ref{section-sigma1}.
 \end{prop}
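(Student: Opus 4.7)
The plan is to mirror the proof of Theorem A inside $\nu(G)$, exploiting the fact that $\nu(G)$ carries normal subgroups $L_\nu(G)$ and $D_\nu(G)$ whose behaviour is completely analogous to that of $L$ and $D$ in $\X(G)$: we have $\nu(G)/D_\nu(G) \simeq G \times \overline{G}$, $\nu(G)/L_\nu(G) \simeq G$, the commutator identity $[L_\nu(G), D_\nu(G)] = 1$ from (\ref{nu-commutator-relation}), and $L_\nu(G)$ finitely generated by Lemma \ref{abel0}. Every character $\chi : \nu(G) \to \R$ vanishes on $W_0 \subseteq \nu(G)'$, so it induces $\widehat{\chi} : \nu(G)/W_0 \simeq \X(G)/W \to \R$, and this identifies the two character spheres so the claimed equality is meaningful. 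Writing $\chi_1(g) = \chi(g)$ and $\chi_2(g) = \chi(\overline{g})$, Lemma \ref{sigma1} rephrases $[\widehat{\chi}] \in \Sigma^1(\X(G)/W)$ as one of the four numerical conditions appearing in Theorem A, so it suffices to establish the same equivalence for $[\chi] \in \Sigma^1(\nu(G))$.

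For the forward direction, Lemma \ref{quotient-sigma1} applied to the projection $\nu(G) \to \nu(G)/D_\nu(G) \simeq G \times G$ forces $[(\chi_1, \chi_2)] \in \Sigma^1(G \times G)$, so by (\ref{sigma-prod}) either both $\chi_1, \chi_2$ are non-zero or one vanishes while the other lies in $\Sigma^1(G)$; and if in addition $\chi_1 = \chi_2 \neq 0$ then $\chi$ factors through $\nu(G)/L_\nu(G) \simeq G$, and Lemma \ref{quotient-sigma1} gives $[\chi_1] \in \Sigma^1(G)$. This handles all four cases in one direction.

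For the backward direction, when $\chi_1 = \chi_2 \neq 0$ and $[\chi_1] \in \Sigma^1(G)$, I would invoke the general principle of Lemma \ref{lemma1.4} with $H = \nu(G)$ and $N = L_\nu(G)$: the finite generation of $L_\nu(G)$ together with $\chi|_{L_\nu(G)} = 0$ are exactly the hypotheses needed to lift paths in the Cayley graph of $G$ back to $\nu(G)_\chi$. When instead $\chi_1 \neq \chi_2$ and $[(\chi_1, \chi_2)] \in \Sigma^1(G \times G)$, I would adapt the proof of Lemma \ref{lemma1.2}: since $\chi(L_\nu(G)) \neq 0$, fix an element $a \in L_\nu(G)$ with $\chi(a) \geq 1$ inside a finite generating set $Y$ of $\nu(G)$, lift any path from $1$ to $\widehat{g}$ in the Cayley graph of $\nu(G)/D_\nu(G)$ to a path in $\Gamma_\chi$ ending at some $tg$ with $t \in D_\nu(G)$, then connect $1$ to $t$ inside $\Gamma_\chi$ by conjugating an auxiliary path from $\Gamma$ by a sufficiently large power of $a$. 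The decisive step is that $a^m t a^{-m} = t$ because of the relation $[L_\nu(G), D_\nu(G)] = 1$ in (\ref{nu-commutator-relation}).

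The main obstacle is that one cannot short-circuit the argument by transporting Theorem A through the quotient $\nu(G) \to \nu(G)/W_0 \simeq \X(G)/W$ via Lemma \ref{lemma1.4}: under the sole assumption that $G$ is finitely generated we do not have access to the Kochloukova--Sidki finite generation theorem, so $W_0$ need not be finitely generated and the hypothesis of Lemma \ref{lemma1.4} fails for $N = W_0$. This is precisely why the argument must be carried out intrinsically inside $\nu(G)$ using the pair $(L_\nu(G), D_\nu(G))$ rather than imported from Theorem A. Once this intrinsic setup is adopted, the combinatorial lift-and-conjugate argument goes through verbatim, since the only two properties it uses, namely $[L_\nu(G), D_\nu(G)] = 1$ and finite generation of $L_\nu(G)$, are both available.
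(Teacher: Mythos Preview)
Your proof is correct and follows essentially the same route as the paper: adapt Lemma~\ref{lemma1.2} inside $\nu(G)$ using $[L_\nu(G),D_\nu(G)]=1$ for the case $\chi_1\neq\chi_2$, and apply Lemma~\ref{lemma1.4} with $N=L_\nu(G)$ (finitely generated by Lemma~\ref{abel0}) for the case $\chi_1=\chi_2$. The only minor difference is in the forward direction: the paper obtains $\Sigma^1(\nu(G))\subseteq\Sigma^1(\X(G)/W)$ in one stroke by applying Lemma~\ref{quotient-sigma1} directly to the epimorphism $\nu(G)\to\nu(G)/W_0\simeq\X(G)/W$ (this lemma needs no hypothesis on the kernel, so the possible non-finite-generation of $W_0$ is irrelevant here), whereas you instead reprove the analogue of Lemma~\ref{lemma1.1} by passing to $G\times G$ and $G$ separately. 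Both are valid; the paper's version is slightly shorter.
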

 
 \begin{proof} I. By (\ref{nu-commutator-relation}) the same argument from Lemma \ref{lemma1.2} applies with $\X(G)$ substituted by $\nu(G)$ i.e. if  $\chi : \nu(G) \to \mathbb{R}$ is a character such that for $\chi_0 = (\chi_1, \chi_2) : G \times G \to \mathbb{R}$ defined by $\chi_1(g) = \chi(g), \chi_2(g) = \chi( \overline{g})$ we have that $\chi_1 \not= \chi_2$ and $[\chi_0] \in \Sigma^1(G \times G)$ then $[\chi] \in \Sigma^1(\nu(G))$. Recall that $[\chi_0] \in \Sigma^1(G \times G)$ is equivalent to one of the following  : 1) $\chi_1 = 0, [\chi_2] \in \Sigma^1(G)$; 2) $\chi_2 = 0, [\chi_1] \in \Sigma^1(G)$; 3) $\chi_1 \not= 0, \chi_2 \not= 0$.

II.  In Lemma \ref{abel0} we proved that $L_{\nu}(G)$ is a finitely generated subgroup of $\nu(G)$.
Then by Lemma \ref{lemma1.4} applied for $H = \nu(G)$ and $N = L_{\nu}(G)$  we deduce that if $\chi_1 = \chi_2 \not= 0$ and $[\chi_1] \in \Sigma^1(G)$ then $[\chi] \in \Sigma^1(\nu(G))$. 

I. and II. together with the description of $\Sigma^1(\X(G)/ W)$ in Section \ref{section-sigma1} imply that
$\Sigma^1(\X(G)/ W) \subseteq \Sigma^1(\nu(G))$.

On the other hand since $W_0 \subseteq \nu(G)'$ we can apply Lemma \ref{quotient-sigma1} and deduce
$$\Sigma^1(\nu(G) ) \subseteq \Sigma^1(\nu(G)/ W_0) = \Sigma^1(\X(G) / W),$$
where the last equality follows from the isomorphism  (\ref{iso321}).
\end{proof}

\end{document}